 \newcommand{\D}[2]{\ensuremath{ \frac{\partial{#1}}{\partial{#2}}}}
 \newcommand{\R}{\ensuremath{\mathbb{R}}}
 \newcommand{\Z}{\ensuremath{\mathbb{Z}}}
 \newcommand{\ba}{\begin{align*}}
 \newcommand{\ea}{\end{align*}}
 \DeclareMathOperator{\Vol}{Vol}
 \DeclareMathOperator{\diam}{diam}
 \newcommand{\norm}[2]{{ \ensuremath{\|} #1 \ensuremath{\|}}_{#2}}
 \def\ExtendSymbol#1#2#3#4#5{\ext@arrow 0099{\arrowfill@#1#2#3}{#4}{#5}}
 \def\ExtendSymbol#1#2#3#4#5{\ext@arrow 0099{\arrowfill@#1#2#3}{#4}{#5}}
 \definecolor{hao}{rgb}{1,0.5,0}
 \definecolor{miao}{cmyk}{0.5,0,0.2,0.2}
 \definecolor{qiao}{gray}{0.96}
 \newtheorem*{clm}{Claim}
 \newtheorem{corollary}{Corollary}[section]
 \newtheorem{proposition}{Proposition}[section]
 \newtheorem{lemma}{Lemma}[section]
 \newtheorem{theorem}{Theorem}[section]
 \newtheorem{definition}{Definition}[section]
 \newtheorem{remark}{Remark}[section]
 \newtheorem{theoremin}{Theorem}
 \newtheorem{remarkin}{Remark}
 \title{On the conditions to extend Ricci flow(II)}
 \author{Bing Wang\footnote{Supported by NSF grant DMS-1006518.}}
 \date{}
\begin{document}
 \maketitle

 \begin{abstract}
   We develop some estimates under the Ricci flow and use these estimates to study the blowup rates of curvatures
  at singularities.  As applications, we obtain some gap theorems:
  $\displaystyle \sup_X |Ric|$ and $\displaystyle \sqrt{\sup_X |Rm|} \cdot \sqrt{\sup_X |R|}$
  must blowup at least at the rate of type-I.    Our estimates also imply some
  gap theorems for shrinking Ricci solitons.
\end{abstract}

 \section{Introduction}

 Let $X^m$ be a complete manifold of dimension $m$. $\left\{ (X^m, g(t)), 0 \leq t<T<\infty \right\}$
 is called a Ricci flow solution if $g(t)$ satisfies the equation
 \begin{align}
      \D{}{t} g(t) = -2Ric.
      \label{eqn:ricciflow}
 \end{align}
 The Ricci flow was introduced by Hamilton in his seminal paper~\cite{Ha1}, where he used the Ricci flow to study the topology of 3-manifolds with positive Ricci curvature. In the same paper, Hamilton showed the short time existence
 of equation (\ref{eqn:ricciflow}) whenever $X^m$ is a closed manifold. His proof was then simplified by DeTurck (\cite{De}). If the underlying manifold $X^m$ is a complete manifold with bounded sectional curvature, the short time existence was proved by~\cite{Shi}.

 The Ricci flow is defined as a tool to find the Einstein metric on the underlying manifold. However, generally, the Ricci flow will develop
 singularities before it converge to an Einstein metric.  A classical example is the Ricci flow starting from a
 dumbbell metric on $S^m(m \geq 3)$.  This singularity was described precisely by S.Angenent and D.Knopf(\cite{AnD}).
 Since the singularities can not be avoided, it is important to study the behavior of the Ricci flow around the singularities.

 In~\cite{Ha3}, Hamilton showed that the Ricci flow can be extended over $T$ if $|Rm|$ is uniformly bounded
 on the space-time $X \times [0, T)$. In other words,  $|Rm|$ blows up if $T$ is a singular time.
 In~\cite{Se}, Sesum proved that $|Ric|$ blows up at singular time. These theorems are fundamental.
 They were generalized in many directions.
 See~\cite{Ye},~\cite{BWa},~\cite{Kno},~\cite{MC},~\cite{EMT},~\cite{LS3},~\cite{CaZh},~\cite{CaX},
 and the references therein for more information.\\

 Before the singular time $T$ of a Ricci flow, an application of maximum principle implies that $|Rm|$ not only  blows up,
 but also blows up at a big rate(c.f.Lemma 8.7 of~\cite{CLN}):
 \begin{align}
   \lim_{t \to T} |T-t| (\sup_X |Rm|) \geq \frac{1}{8}.
   \label{eqnin:rmgap}
 \end{align}
 A natural question is: does similar behavior hold for $|Ric|$?  In this paper, we answer this question affirmatively.

 \begin{theoremin}
  Suppose $\left\{ (X, g(t)), 0 \leq t<T \right\}$ is a Ricci flow solution, $X$ is a closed manifold of dimension $m$,
  $t=T$ is a singular time.  Then
  \begin{align}
    \limsup_{t \to T} |T-t| \left(\sup_X |Ric|_{g(t)} \right) \geq \eta_1,
    \label{eqnin:et1}
  \end{align}
  where $\eta_1=\eta_1(m, \kappa)$, $\kappa$ is the non-collapsing constant of this flow.
  \label{thmin:et1}
\end{theoremin}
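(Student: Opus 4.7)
The plan is to argue by contradiction, exploiting the classical type-I lower bound (\ref{eqnin:rmgap}) for $|Rm|$ together with a parabolic rescaling and compactness argument. Assume the conclusion fails, so there exists a sequence $\eta_k \downarrow 0$ and times $t_{k,*} \uparrow T$ with $(T-t)\sup_X|Ric|_{g(t)} \le \eta_k$ for $t \in [t_{k,*},T)$. Using (\ref{eqnin:rmgap}), pick $t_i \uparrow T$ and $x_i \in X$ with $|Rm|_{g(t_i)}(x_i) \geq Q_i := \tfrac12 \sup_X |Rm|_{g(t_i)}$, so that $Q_i(T-t_i) \ge 1/32$. Form the parabolic blow-ups $g_i(s) := Q_i\, g(t_i + s/Q_i)$ on the appropriate interval containing $s=0$. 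Then $|Rm|_{g_i}(x_i,0) \ge 1$ and $\sup_X|Rm|_{g_i}(\cdot,0) \le 2$; for $s \le 0$ the Ricci bound rescales to $|Ric|_{g_i}(\cdot,s) \le \eta_k/(Q_i(T-t_i)-s) \le 32\eta_k$; and Perelman's $\kappa$-non-collapsing is scale invariant, so each $g_i$ is $\kappa$-non-collapsed at all scales.

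The technical heart of the argument is to promote the time-zero bound on $|Rm|$ and the uniform bound on $|Ric|$ into locally uniform $|Rm|$ bounds on a fixed parabolic cylinder $B_{g_i(0)}(x_i, R) \times [-\tau, 0]$. For this I would invoke the a priori estimates developed earlier in the paper, which are designed precisely to control the backward-in-time propagation of curvature under a uniform Ricci bound, and then apply Shi's derivative estimates. Hamilton's compactness theorem, combined with Perelman's non-collapsing, then yields a pointed smooth subsequential limit $(X_\infty, g_\infty(s), x_\infty)$ on $(-\tau, 0]$ that is complete, $\kappa$-non-collapsed, satisfies $|Rm|_{g_\infty}(x_\infty, 0) \geq 1$, and has $|Ric|_{g_\infty} \leq 32 \eta_k$. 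A further diagonal extraction as $\eta_k \to 0$ produces a nontrivial complete $\kappa$-non-collapsed smooth Ricci flow whose Ricci curvature vanishes identically, hence a stationary nonflat Ricci-flat limit with $|Rm|(x_\infty, 0) \ge 1$.

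The concluding step is to rule out such a limit: standard rigidity for $\kappa$-non-collapsed Ricci-flat smooth limits of Ricci flows (or an $\varepsilon$-regularity / entropy argument transplanted back to the original flow just before $T$) forces any such limit to be flat, contradicting $|Rm|_{g_\infty}(x_\infty, 0) \ge 1$ and closing the argument. The main obstacle is the technical step in the second paragraph: a pointwise domination $|Rm| \le C|Ric|$ is false in general, so one cannot simply read off a backward $|Rm|$ bound from the $|Ric|$ bound, and the full strength of the parabolic estimates prepared in the body of the paper is needed to propagate the initial-time curvature control backward in time. A secondary concern is the precise rigidity used to exclude the Ricci-flat limit, which must be compatible with the $\kappa$ furnished by Perelman's theorem and the fact that the limit arises as a pointed Cheeger--Gromov--Hamilton limit of finite-time Ricci flows, since the bare existence of complete Ricci-flat manifolds with nontrivial curvature in dimension $\ge 4$ (e.g. K3) means the contradiction must use more than just Ricci-flatness.
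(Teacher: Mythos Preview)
Your blow-up strategy is natural, but the concluding rigidity step is a genuine gap. There is no ``standard rigidity'' forcing a complete, $\kappa$-non-collapsed, Ricci-flat manifold with bounded curvature to be flat in dimension $m \ge 4$: ALE gravitational instantons such as the Eguchi--Hanson metric are complete Ricci-flat $4$-manifolds, $\kappa$-non-collapsed on all scales for some $\kappa>0$, with $|Rm|$ bounded and not identically zero. Your limit is exactly such an object, so its bare existence is not a contradiction, and you correctly note this yourself in the last sentence without resolving it. (The backward-in-time $|Rm|$ control you flag as the ``main obstacle'' is in fact the lesser issue: a Perelman-type point-picking would furnish $|Rm|\le 4$ on a parabolic neighborhood of $(x_i,t_i)$ of definite size, enough to run Hamilton compactness; but you would still be stuck at the rigidity step. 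Note also that the paper's estimates, e.g.\ Theorem~\ref{thm:b23_1}, \emph{assume} rather than prove backward $|Rm|$ bounds, so your appeal to them for this purpose is misplaced.)

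The paper sidesteps this difficulty entirely. Rather than extracting a limit and seeking a classification to exclude it, the paper proves a direct quantitative estimate (Lemma~\ref{lma:b23_1}): over any time interval on which $Q(t)=\sup_X|Rm|$ changes by a factor of~$2$, one has $\int P(t)\,dt>\epsilon_0(m,\kappa)$. The mechanism is a Gromov--Hausdorff comparison of the flow with a time-translate of itself: if $Q$ has doubled (or halved) but $\int P\,dt$ is tiny, then unit balls at the two times are GH-close, yet at a well-chosen base point the pointwise value of $|Rm|$ differs by a definite factor, and this is ruled out by a compactness argument inside the moduli space $\mathcal{M}(m,\kappa)$ (Lemma~\ref{lma:edric1}). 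Iterating over successive doublings of $Q$ and combining with $Q(t)(T-t)\ge \tfrac18$ gives (\ref{eqnin:et1}) with $\eta_1=\epsilon_0/\log 2$ (Corollary~\ref{cly:a6_2}). No structure theorem for a hypothetical Ricci-flat limit is ever invoked, which is precisely why the argument succeeds where the contradiction-by-limit approach stalls.
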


Note that we do not assume that the singularity is of type-I in Theorem~\ref{thmin:et1}.
If the singularity is of type-I, inequality (\ref{eqnin:et1}) was
implied by the major results in~\cite{EMT},~\cite{CaZh}, and a gap theorem of gradient shrinking solitons in~\cite{MW}.
In this case, $\eta_1$ can be chosen as $\frac{1}{100m^2}$.\\

 As indicated by~\cite{Se}, along a Ricci flow over a closed manifold,
 $|Ric|$ being uniformly bounded implies $|Rm|$ being uniformly bounded.
 One should ask whether $|Ric|$ being type-I implies $|Rm|$ being type-I?  Actually, this is a question professor
 X.X.Chen asked me around 2005.  The general answer is still open.  However, we
 can show that the blowup rate of $|Rm|$ can not be too quick if $|Ric|$ is of type-I.

\begin{theoremin}
  Suppose $\left\{ (X, g(t)), 0 \leq t<T \right\}$ is a Ricci flow solution, $X$ is a closed manifold of dimension $m$,
  $t=T$ is a singular time. If  $\displaystyle \limsup_{t \to T} |T-t|\left(\sup_X |Ric|_{g(t)}\right)=C$, then
  \begin{align*}
    \limsup_{t \to T} |T-t|^{\lambda} \left(\sup_X |Rm|_{g(t)}\right)=0
  \end{align*}
  whenever $\lambda > \frac{C}{\epsilon_1}$, where $\epsilon_1=\epsilon_1(m, \kappa)$, $\kappa$
  is the non-collapsing constant of this flow.
  \label{thmin:et2}
\end{theoremin}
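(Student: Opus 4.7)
The plan is to iterate a local doubling-type estimate whose time window is measured in units of $1/\sup|Ric|$; the Ricci type-I bound $|Ric|\leq C/(T-t)$ then forces $\sup_X|Rm|$ to grow no faster than a polynomial of degree $\sim C/\epsilon_1$ in $1/(T-t)$.

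The engine is the following local lemma, which I would establish first (it fits naturally with the ``estimates under the Ricci flow'' promised in the abstract):

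\begin{lm}
There exists $\epsilon_1=\epsilon_1(m,\kappa)>0$ such that if $\{(X,g(t))\}$ is a $\kappa$-non-collapsed closed Ricci flow on $[t_0-\epsilon_1/\Lambda,\,t_0]$ with $\sup_X|Ric|\leq\Lambda$ throughout, then
\[
\sup_X|Rm|_{g(t_0)}\;\leq\;2\sup_X|Rm|_{g(t_0-\epsilon_1/\Lambda)}.
\]
\end{lm}

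I would prove the lemma by contradiction and blowup. A failing sequence of flows, after parabolic rescaling by $Q_i:=\sup_X|Rm|(t_0)$, yields a rescaled flow with $\sup|Rm|\leq 1$ at time $0$ and with $|Ric|\leq\Lambda_i/Q_i$ on a window of rescaled length $\epsilon_iQ_i/\Lambda_i$. A dichotomy is then forced. If $Q_i/\Lambda_i$ stays bounded, the window length is $O(\epsilon_i)\to 0$ and the standard doubling-time estimate for $|Rm|$ (derived from $\partial_t|Rm|\leq\triangle|Rm|+c|Rm|^2$) prevents $\sup|Rm|$ from halving in so short a time. Otherwise $Q_i/\Lambda_i\to\infty$, so $|Ric|\to 0$ in the rescaled flow; using Perelman's $\kappa$-non-collapsing one invokes Hamilton's Cheeger--Gromov compactness to obtain a smooth limit Ricci flow with $Ric\equiv 0$. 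Such a flow is stationary, so $\sup|Rm|$ is constant in $s$, incompatible with the hypothesis that it had halved at the earlier time.

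Granting the lemma, the main theorem follows by iteration. Given $\delta>0$, the type-I hypothesis on $|Ric|$ gives $t^*<T$ with $\sup_X|Ric|\leq(C+\delta)/(T-t)$ for $t\in[t^*,T)$. Define $t_0=t^*$ and $t_{k+1}=t_k+\epsilon_1(T-t_k)/(C+\delta)$, so that $T-t_{k+1}=(T-t_k)(1-\epsilon_1/(C+\delta))$ and the lemma applies on each step to yield $\sup_X|Rm|(t_{k+1})\leq 2\sup_X|Rm|(t_k)$. A geometric-series computation then gives
\[
(T-t_k)^{\lambda}\sup_X|Rm|(t_k)\;\leq\;(T-t_0)^{\lambda}\sup_X|Rm|(t_0)\,\bigl[2(1-\epsilon_1/(C+\delta))^{\lambda}\bigr]^{k},
\]
which tends to $0$ whenever $\lambda>\log 2\,/\,\log(1/(1-\epsilon_1/(C+\delta)))$; sending $\delta\to 0$ and absorbing $\log 2$ into the definition of $\epsilon_1$ produces the stated threshold $\lambda>C/\epsilon_1$. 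Extending the bound from the discrete times $t_k$ to all $t\to T$ is a one-line consequence of applying the lemma on $[t_k,t]$ for $t\in[t_k,t_{k+1}]$.

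The principal obstacle is the local lemma, and within it the regime $Q_i/\Lambda_i\to\infty$, where the Ricci-flat limit must be produced. It is here that the $\kappa$-non-collapsing hypothesis is indispensable: without it Hamilton compactness breaks down and no smooth limit can be extracted, which is also the reason the constant $\epsilon_1$ must depend on $\kappa$. The punchline, that a Ricci-flat Ricci flow is stationary and therefore cannot host any change in $\sup|Rm|$, is what ultimately converts the integral control on $Ric$ into pointwise control on $Rm$.
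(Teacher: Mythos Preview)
Your overall architecture matches the paper's: a compactness-based doubling lemma, then iteration. The genuine gap is in the lemma. To run Hamilton compactness in your Case~2 you need a uniform bound on $|Rm|$ over a definite backward time window, but your hypotheses give only $|Ric|\leq\Lambda$ and $\kappa$-non-collapsing; neither controls $\sup_X|Rm|$ at times before $t_0$, so no smooth limit flow can be extracted. (A related symptom: your dichotomy is incomplete---in Case~2 the rescaled window length $\epsilon_iQ_i/\Lambda_i$ may still tend to zero as $\epsilon_i\to 0$.) The paper builds this bound into the lemma by working in the moduli space $\mathcal{M}(m,\kappa)$, which requires $Q(t)\leq 2$ on a unit backward interval, and arranges the iteration to supply it automatically by taking $s_i$ to be the \emph{first} time $Q$ reaches $2^iQ(t_0)$, so that $Q(t)\leq Q(s_i)$ for all earlier $t$. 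Your forward time-stepping does not furnish this; you would need to switch to doubling times for $Q$, or insert a point-picking step into the lemma's proof to recover the backward curvature bound.

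There is also a minor bookkeeping slip in the iteration: on $[t_k,t_{k+1}]$ the correct Ricci upper bound is $(C+\delta)/(T-t_{k+1})$, so with your step $t_{k+1}-t_k=\epsilon_1(T-t_k)/(C+\delta)$ the product $\Lambda\cdot(t_{k+1}-t_k)$ exceeds $\epsilon_1$ and the lemma does not directly apply. The paper's formulation via $\int_0^{t_1}P\,dt>\epsilon_0$ rather than $\sup P\cdot\Delta t$ avoids this and also fits its mechanism for the lemma, which differs from yours: instead of producing a Ricci-flat limit, it shows (via compactness, Lemma~\ref{lma:edric1}) that a change of $Q$ by a definite factor forces a definite Gromov--Hausdorff separation between the two time slices, and then observes that $\int P\,dt$ directly controls metric distortion and hence that GH-distance.
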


\vspace{0.1in}

 It was conjectured by X.X.Chen that the Ricci flow can be extended over $T$ whenever the scalar curvature $R$
 is uniformly bounded. If the underlying manifold is K\"ahler, this conjecture was confirmed by Z.Zhang (\cite{ZhZh}).
 If the singularity is type-I, in view of the works in~\cite{EMT}, ~\cite{LS3}, and~\cite{CaZh},
 the answer is also affirmative.  However, for general Riemannian Ricci flow with dimension $m \geq 4$,
 this conjecture is still open. In this paper, we drop the type-I condition and prove the following
 gap theorem for
 $\displaystyle \limsup_{t \to T} |T-t| \left(\sqrt{\sup_X |Rm|_{g(t)}} \cdot \sqrt{\sup_X |R|_{g(t)}} \right) $.

\begin{theoremin}
  Suppose $\left\{ (X, g(t)), 0 \leq t<T \right\}$ is a Ricci flow solution, $X$ is a closed manifold of dimension $m$,
  $t=T$ is a singular time.  Then
  \begin{align}
    \limsup_{t \to T} |T-t| \left(\sqrt{\sup_X |Rm|_{g(t)}} \cdot \sqrt{\sup_X |R|_{g(t)}} \right) \geq \eta_2,
    \label{eqnin:et3}
  \end{align}
  where $\eta_2=\eta_2(m, \kappa)$, $\kappa$ is the non-collapsing constant of this flow.
  \label{thmin:et3}
\end{theoremin}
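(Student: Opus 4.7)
The plan is to argue by contradiction. Fix a small $\eta>0$ to be determined and suppose a sequence $t_k\nearrow T$ satisfies $(T-t_k)\sqrt{\sup_X|Rm|_{g(t_k)}\cdot\sup_X|R|_{g(t_k)}}\to 0$. Squaring and combining with the classical gap (\ref{eqnin:rmgap}), which gives $(T-t_k)\sup_X|Rm|_{g(t_k)}\geq 1/8$ for $t_k$ near $T$, one obtains $(T-t_k)\sup_X|R|_{g(t_k)}\to 0$. So, under the contradiction hypothesis, the scalar curvature is already of type-I with a \emph{vanishing} constant, even though the full curvature tensor is not assumed to be.

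Next I would perform a Hamilton-type blow-up. Choose $(x_k,t_k)$ so that $Q_k:=|Rm|_{g(t_k)}(x_k)=\sup_X|Rm|_{g(t_k)}$, with a point-picking argument producing a backward parabolic neighborhood where $|Rm|\leq 2Q_k$. Set $\tilde g_k(s):=Q_k\,g(t_k+Q_k^{-1}s)$, so that $|\widetilde{Rm}|_{\tilde g_k(0)}(x_k)=1$ and $\tilde T_k:=(T-t_k)Q_k\geq 1/8$. By Perelman's $\kappa$-noncollapsing together with Hamilton's compactness, the pointed flows converge smoothly to an ancient Ricci flow $(Y,h(s),y_\infty)$ with $|Rm|_h(y_\infty,0)=1$. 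The hypothesis rescales to $(\tilde T_k-s)^2\sup|\widetilde{Rm}|(s)\sup|\widetilde R|(s)\leq\eta^2$, which in the limit yields $\sup_Y|R|_{h(s)}\leq C\eta^2$ on a backward interval of fixed length; this can be made as small as desired by shrinking $\eta$.

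Having reduced to a limit flow with $|Rm|_h(y_\infty,0)=1$ but $\sup|R|_h$ arbitrarily small, I would invoke the paper's new curvature estimates to identify this limit as a nontrivial shrinking gradient Ricci soliton, either by upgrading the type-I smallness of $|R|$ to a type-I bound on $|Rm|$ and then applying Enders--M\"uller--Topping (\cite{EMT}), or by a direct reduced-volume argument in the spirit of Perelman. On the resulting soliton one has $R\geq 0$ by a standard maximum-principle argument on shrinkers, so $\sup_Y R\leq C\eta^2$ is small. A soliton gap theorem as in \cite{MW} (or the soliton gap promised by the paper's own estimates) asserts that a non-flat $\kappa$-noncollapsed shrinking soliton must satisfy $\sup R\geq c(m,\kappa)>0$; choosing $\eta$ with $C\eta^2<c(m,\kappa)$ forces the limit to be flat, contradicting $|Rm|_h(y_\infty,0)=1$. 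Tracking constants back then gives $\eta_2=\eta_2(m,\kappa)$.

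The principal obstacle is the middle step: producing enough backward control on $|Rm|$ from the smallness of $(T-t)\sup|R|$ to carry out the blow-up and identify the limit as a soliton. The naive reaction–diffusion inequality $\partial_t|Rm|\leq\Delta|Rm|+C|Rm|^2$ is insensitive to the size of $R$ and yields only the crude bound $(T-t)\sup|Rm|\geq c>0$; to replace it one must exploit a sharper estimate coupling $|Rm|$ to $|R|$, presumably a backward pseudolocality refined through Perelman's $\kappa$-noncollapsing. Once that technical input is granted, the soliton gap step is conceptually routine.
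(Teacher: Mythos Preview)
Your approach diverges substantially from the paper's, and the gap you yourself flag in the middle step is genuine and, as far as I can see, not closable without importing the paper's own machinery.

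The paper never passes through a soliton limit. It combines two direct estimates. First, a Moser iteration on $(\partial_t-\Delta)|Ric|\leq 4|Rm||Ric|$, together with the identity $(\partial_t-\Delta)R=2|Ric|^2$ used to bound $\iint|Ric|^2$ by $\sup|R|$, gives (after normalization) $P(t)\leq A_0\sqrt{O(t)Q(t)}$; this is Theorem~\ref{thm:lcRm} and Corollary~\ref{cly:c3_1}. Second, a compactness/Gromov--Hausdorff argument (Lemma~\ref{lma:b23_1}) shows that whenever $Q(t)=\sup_X|Rm|$ doubles, one must have $\int P(t)\,dt\geq\epsilon_0(m,\kappa)$ over the doubling interval. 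Chaining these: if $\sqrt{O(t)Q(t)}\,|T-t|\leq C$ near $T$, then $P(t)\lesssim C/|T-t|$ at the relevant scales, so each doubling of $Q$ consumes a fixed multiplicative fraction of the remaining time, forcing $Q(t)=o(|T-t|^{-\lambda})$ for an explicit $\lambda(C)$ (Corollary~\ref{cly:b24_2}). Since $Q(t)\geq\tfrac{1}{8}|T-t|^{-1}$, one needs $\lambda\geq 1$, which gives $C\geq\eta_2$ (Corollary~\ref{cly:b22_1}). Note that the soliton gap (Theorem~\ref{thmin:ssolitongap}) is a \emph{consequence} of these estimates, not an input to Theorem~\ref{thmin:et3}.

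In your route, the blow-up step already stalls: point-picking at $Q_k=\sup_X|Rm|_{g(t_k)}$ secures $|Rm|\leq 2Q_k$ only on a backward interval of rescaled length $O(1)$, not an ancient solution; extending backward requires a priori type-I control on $|Rm|$, which is exactly what is unavailable. Even granting a limit on a bounded time interval with $R\equiv 0$, the evolution equation forces $Ric\equiv 0$, so the limit is a \emph{static Ricci-flat} flow---which can perfectly well have $|Rm|(y_\infty,0)=1$ (think Eguchi--Hanson) and is not a shrinking soliton, so neither \cite{EMT} nor the gap of \cite{MW} applies. The ``upgrading'' you propose---from smallness of $(T-t)\sup|R|$ to type-I control on $|Rm|$---is precisely the content of Corollaries~\ref{cly:b24_2}--\ref{cly:b22_1}; completing your argument would amount to reproducing the paper's proof inside it. (A minor additional point: your contradiction hypothesis should be $\limsup<\eta$ for all $t$ near $T$, not merely the existence of a sequence along which the quantity tends to $0$.)
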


In particular, if $|R|$ is uniformly bounded, then $|Rm|$ must blowup at least at the rate
 $(T-t)^{-2}$, which imply that the singularity cannot be type-I.  Therefore, we can recover
 the extension theorems in~\cite{EMT},~\cite{LS3}, and~\cite{CaZh} by Theorem~\ref{thmin:et3}. \\

Compare Theorem~\ref{thmin:et1} and Theorem~\ref{thmin:et3}, we find that $\displaystyle \sqrt{\sup_X |Rm|_{g(t)}} \cdot \sqrt{\sup_X |R|_{g(t)}}$ behaves like
$\displaystyle \sup_X |Ric|_{g(t)}$.  Furthermore, we also have a theorem similar to Theorem~\ref{thmin:et2}.

\begin{theoremin}
  Suppose $\left\{ (X, g(t)), 0 \leq t<T \right\}$ is a Ricci flow solution, $X$ is a closed manifold of dimension $m$,
  $t=T$ is a singular time. If
   $\displaystyle \limsup_{t \to T} |T-t|\left(\sqrt{\sup_X |Rm|_{g(t)}} \cdot \sqrt{\sup_X |R|_{g(t)}} \right)=C$, then
  \begin{align*}
    \limsup_{t \to T} |T-t|^{\lambda} \left(\sup_X |Rm|_{g(t)}\right)=0
  \end{align*}
  whenever $\lambda > \frac{1}{\log_2(1+\frac{\epsilon_2}{C})}$, where $\epsilon_2=\epsilon_2(m, \kappa)$,
  $\kappa$ is the non-collapsing constant of this flow.
  \label{thmin:et4}
\end{theoremin}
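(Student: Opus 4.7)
The plan is to prove Theorem~\ref{thmin:et4} by iterative bootstrap on the polynomial upper bound for $\sup_X|Rm|_{g(t)}$, running in close parallel to the deduction of Theorem~\ref{thmin:et2} from Theorem~\ref{thmin:et1} but with the mixed geometric mean $\sqrt{\sup_X|Rm|\cdot \sup_X|R|}$ in place of $\sup_X|Ric|$. The multiplicative role of the constraint constant $C$ in controlling this geometric mean is what converts the linear-in-$C$ threshold of Theorem~\ref{thmin:et2} into the logarithmic threshold $1/\log_2(1+\epsilon_2/C)$ of the present statement.

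First, I would extract from the proof of Theorem~\ref{thmin:et3} a quantitative $\epsilon$-regularity lemma of roughly the following shape: there exists $\epsilon_2=\epsilon_2(m,\kappa)>0$ such that on any $\kappa$-non-collapsed Ricci flow over a closed manifold, if $(T-t_0)\sqrt{\sup_X|Rm|(t_0)\cdot\sup_X|R|(t_0)}<\epsilon_2$ at some $t_0$ sufficiently close to $T$, then on a definite parabolic neighborhood of $(\,\cdot\,,t_0)$ one has the type-I bound $\sup_X|Rm|(t)\le A_0(T-t)^{-1}$ with a universal constant $A_0$. Theorem~\ref{thmin:et3} is essentially the asymptotic contrapositive of this statement, and its proof -- built on Perelman's reduced-distance / reduced-volume machinery and the same blow-up arguments used for Theorem~\ref{thmin:et1} -- should yield this local-in-time, quantitative form directly.

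Second, the bootstrap step. Assume the preliminary control $\sup_X|Rm|(t)\le A(T-t)^{-\alpha}$ has been established on some interval $[T-r,T)$ with $\alpha\ge 1$. The product hypothesis forces $\sup_X|Rm|(t)\cdot \sup_X|R|(t)\le (C+\epsilon')^2(T-t)^{-2}$, so at any time where $\sup_X|Rm|$ is within a factor of $2$ of its current upper envelope the scale-invariant product $(T-t)^2 \sup_X|Rm|\cdot\sup_X|R|$ is at most $(C+\epsilon')^2$. Rescaling parabolically by $\sup_X|Rm|(t)^{1/2}$, the product is unchanged, while the rescaled remaining-time $(T-t)\sup_X|Rm|(t)\le A(T-t)^{1-\alpha}$ shrinks to $0$ as $t\to T$ whenever $\alpha>1$. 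Applying the $\epsilon$-regularity lemma at this small rescaled time, with the tolerance $\epsilon_2$ compared against $C+\epsilon'$, produces the improved bound $\sup_X|Rm|(t)\le A'(T-t)^{-\alpha/(1+\epsilon_2/C)}$ on a slightly smaller interval. One bootstrap step thus reduces the exponent by the multiplicative factor $1/(1+\epsilon_2/C)$. Iterating, $\alpha_k=\alpha_0(1+\epsilon_2/C)^{-k}$, and $\alpha_k$ passes below any prescribed $\lambda$ after $k>\log_2(\alpha_0/\lambda)/\log_2(1+\epsilon_2/C)$ steps, which is exactly the geometric-series structure behind the $\log_2$ denominator in the threshold.

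The main obstacle will be the first step: converting the global-in-space product hypothesis into a genuinely local pseudolocality statement, with quantitative constants that survive iterated rescaling. This requires the full force of the Perelman reduced-volume non-collapsing combined with the blow-up analysis that underlies Theorems~\ref{thmin:et1} and~\ref{thmin:et3}, and the constants must be tracked so that the per-step gain compounds into the claimed logarithmic threshold rather than a weaker polynomial one. One must also supply a preliminary finite $\alpha_0$ to start the bootstrap; this can be arranged by combining the hypothesis with the universal lower bound $\sup_X|Rm|(t)\ge 1/(8(T-t))$ recalled in (\ref{eqnin:rmgap}), which already gives a polynomial, though non-sharp, upper bound on $\sup_X|R|$ and hence on the iteration of the pseudolocality lemma. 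Once the pseudolocality lemma is in hand, Steps two and three become an iterative geometric-series bookkeeping essentially identical in spirit to the derivation of Theorem~\ref{thmin:et2} from Theorem~\ref{thmin:et1}.
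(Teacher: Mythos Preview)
Your proposal has the right geometric-series structure (a multiplicative gain of $1/(1+\epsilon_2/C)$ per step, giving the $\log_2$ threshold), but the mechanism you propose for each step does not work, and the logical dependence on Theorem~\ref{thmin:et3} is reversed. In the paper, Theorem~\ref{thmin:et3} is a \emph{consequence} of Corollary~\ref{cly:b24_2} (which is the content of Theorem~\ref{thmin:et4}), not an input to it; there is no $\epsilon$-regularity/pseudolocality lemma of the form ``small product at one time $\Rightarrow$ type-I bound nearby'' anywhere in the argument, and Perelman's reduced volume plays no role beyond supplying the non-collapsing constant $\kappa$. Concretely, your bootstrap has no starting exponent $\alpha_0$: the hypothesis together with $Q(t)\ge\frac{1}{8|T-t|}$ yields an upper bound on $O(t)=\sup_X|R|$, not on $Q(t)=\sup_X|Rm|$, so there is no a~priori polynomial bound on $Q$ to initialize the iteration. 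And even granting your lemma, it is unclear how a statement whose conclusion is ``$Q(t)\le A_0(T-t)^{-1}$'' produces a \emph{fractional} improvement $\alpha\mapsto\alpha/(1+\epsilon_2/C)$ rather than jumping straight to $\alpha=1$.

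The paper's actual route is direct and avoids all of this. Two estimates do the work: (i) the Moser-iteration bound $P\le A_0\sqrt{O}$ after normalizing so that $Q\approx 1$ (Corollary~\ref{cly:c3_1}, i.e.\ an estimate of the type $|Ric|\lesssim\sqrt{|Rm|\cdot|R|}$), and (ii) the doubling-time gap $\int_{t_0}^{t_1}P\,dt>\epsilon_0$ whenever $Q$ doubles (Lemma~\ref{lma:b23_1}, proved by a compactness/GH-distance argument). One then tracks the sequence $s_i=\inf\{t\ge t_0: Q(t)=2^iQ(t_0)\}$: on $[s_i,s_{i+1}]$ one rescales so $Q\approx 1$, uses (i) to bound $P$ by $\sqrt{2}A_0(C+\delta)/(Q(s_i)|s_{i+1}|)$, and combines with (ii) to get $|s_{i+1}|/|s_i|\le (1+\epsilon_0/(\sqrt{2}A_0(C+\delta)))^{-1}$. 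This is the per-step gain; no prior polynomial bound on $Q$ is needed, and the $\log_2$ threshold drops out immediately from $Q(s_i)|s_i|^\lambda\le Q(t_0)|t_0|^\lambda\bigl(2(1+\epsilon_2/C)^{-\lambda}\bigr)^i$.
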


\vspace{0.1in}
The proofs of Theorem~\ref{thmin:et1}, Theorem~\ref{thmin:et2}, Theorem~\ref{thmin:et3}, and Theorem~\ref{thmin:et4}
are based on two new estimates along the Ricci flow.
The first one (Theorem~\ref{thm:b23_1}) is an estimate of $|Rm|$ by integration of $|Ric|$ over a time period,
the second one (Theorem~\ref{thm:lcRm}, Corollary~\ref{cly:c3_1}, Remark~\ref{rmk:c1_2}) is an estimate of the type $|Ric| \leq \sqrt{|Rm| |R|}$.
These estimates have other applications. For example,  they yield a gap theorem for complete shrinking Ricci solitons.

\begin{theoremin}
 There exists a constant $\eta_3=\eta_3(m,\kappa)$ such that the following property holds.

  Suppose $(X^m, g)$ is complete, non-flat, $\kappa$-non-collapsed Riemannian manifold.
 If $(X^m, g)$ satisfies the shrinking Ricci soliton equation
 \begin{align}
   Ric + \mathcal{L}_V g -\frac{g}{2}=0
   \label{eqnin:ssoliton}
 \end{align}
 for some vector field $V$, $\displaystyle \sup_X |Rm|<\infty$, then
 \begin{align}
   \min \left\{\sqrt{\sup_X |Rm|} \cdot \sqrt{\sup_X |R|}, \quad \sup_X |Ric| \right\} \geq \eta_3>0.
   \label{eqnin:ssolitongap}
 \end{align}
  \label{thmin:ssolitongap}
\end{theoremin}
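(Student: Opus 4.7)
My plan is to convert the shrinking soliton into a self-similar Ricci flow and then read off the conclusion from Theorems~\ref{thmin:et1} and~\ref{thmin:et3}. Given $(X,g,V)$ satisfying~\eqref{eqnin:ssoliton}, let $\psi_t\colon X\to X$ be the family of diffeomorphisms generated by the time-dependent vector field $Y_t:=\frac{2V}{1-t}$ with $\psi_0=\id$, and define
\begin{equation*}
g(t):=(1-t)\,\psi_t^{\ast}g,\qquad t\in(-\infty,1).
\end{equation*}
A direct computation using~\eqref{eqnin:ssoliton} gives $\partial_t g(t)=\psi_t^{\ast}(-g+2\mathcal{L}_V g)=\psi_t^{\ast}(-2\,Ric(g))=-2\,Ric(g(t))$, so $g(t)$ is a Ricci flow on $X\times(-\infty,1)$. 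Because $(X,g)$ is non-flat and $\sup_X|Rm|_g<\infty$, the flow is smooth on $(-\infty,1)$ and blows up precisely at $t=1$; by self-similarity the $\kappa$-non-collapsing of $(X,g)$ is inherited by $g(t)$.

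Diffeomorphism invariance together with the pointwise scalings $|Rm|_{cg}=c^{-1}|Rm|_g$, $|Ric|_{cg}=c^{-1}|Ric|_g$, and $|R|_{cg}=c^{-1}|R|_g$ yield the exact identities
\begin{align*}
(1-t)\sup_X|Ric|_{g(t)}&=\sup_X|Ric|_g,\\
(1-t)\sqrt{\sup_X|Rm|_{g(t)}}\,\sqrt{\sup_X|R|_{g(t)}}&=\sqrt{\sup_X|Rm|_g}\,\sqrt{\sup_X|R|_g},
\end{align*}
valid for every $t<1$. Taking $\limsup_{t\to1}$ and applying Theorem~\ref{thmin:et1} to the first identity gives $\sup_X|Ric|_g\ge\eta_1(m,\kappa)$, while Theorem~\ref{thmin:et3} applied to the second gives $\sqrt{\sup_X|Rm|_g}\,\sqrt{\sup_X|R|_g}\ge\eta_2(m,\kappa)$. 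Setting $\eta_3:=\min\{\eta_1,\eta_2\}>0$ delivers~\eqref{eqnin:ssolitongap}.

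\textbf{Main obstacle.} Theorems~\ref{thmin:et1} and~\ref{thmin:et3} are stated for a \emph{closed} underlying manifold, whereas a shrinking soliton with bounded curvature need not be compact. The technical point I would have to address is therefore that those two theorems, or more precisely the two underlying estimates alluded to in the introduction, continue to hold on complete Ricci flows under the combined hypotheses $\sup|Rm|<\infty$ and $\kappa$-non-collapsing. The blow-up analysis at a sequence of almost-maximum curvature points is intrinsically local, and the two hypotheses above are exactly what Hamilton's compactness theorem needs in order to produce a smooth, non-flat, $\kappa$-non-collapsed pointed limit on which the same gap argument survives. I expect this local-to-global reduction to absorb essentially all of the work; once it is in place, the three-step reduction above closes the theorem with no further estimate.
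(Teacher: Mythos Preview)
Your approach is essentially the paper's: expand the shrinking soliton to a self-similar ancient Ricci flow with singular time at $t=1$, then read off the two gap inequalities from the self-similarity identities. The paper's one-line proof invokes Corollary~\ref{cly:a6_2} and Corollary~\ref{cly:b22_1} together with exactly this expansion.

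The obstacle you flag is not actually an obstacle. Theorems~\ref{thmin:et1} and~\ref{thmin:et3} are deduced, via Perelman's no-local-collapsing, from Corollary~\ref{cly:a6_2} and Corollary~\ref{cly:b22_1}, and the latter are already stated for flows in $\mathcal{L}(m,\kappa,[-1,0))$, i.e.\ for \emph{complete} manifolds with $Q(t)<\infty$ and $\kappa$-non-collapsing at each time. Your self-similar flow (after translating the singular time to $0$ and restricting to $[-1,0)$) lies in this moduli space by hypothesis, so you should cite those corollaries directly rather than Theorems~\ref{thmin:et1} and~\ref{thmin:et3}; no separate ``local-to-global reduction'' is needed.
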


 This is similar to the gap theorem obtained by O. Munteanu and M.T. Wang (\cite{MW}).
 There are gap theorems of the Ricci solitons concerning different aspects of the geometry.
 For example, T. Yokota (\cite{Yok}) obtained a gap theorem concerning the
 ``reduced volume" of the gradient shrinking Ricci solitons,
 H.Z. Li (\cite{LiH}) proved a gap theorem in the K\"ahler setting.\\

 \begin{remarkin}
   A weak version of Theorem~\ref{thmin:et3} was independently obtained by X.D. Cao(\cite{CaXP}).
   He proved that $|Rm|$ blows up faster than $(T-t)^{2-\delta}$ for every $\delta>0$ whenever scalar curvature is uniformly bounded.
   \label{rmkin:xdcao}
 \end{remarkin}

 \begin{remarkin}
  The constants $\eta_1, \eta_2$ and $\eta_3$ in Theorem~\ref{thmin:et1}, Theorem~\ref{thmin:et3}, and Theorem~\ref{thmin:ssolitongap} depend only on dimension $m$.
  Also, the constants $\epsilon_1$ and $\epsilon_2$ in Theorem~\ref{thmin:et2} and Theorem~\ref{thmin:et4}
  depend only on dimension $m$. These are proved in~\cite{BWa3}.
 \label{rmkin:univC}
 \end{remarkin}

 \begin{remarkin}
  As indicated by the work of N.Le and N.Sesum (\cite{LS1}, \cite{LS2}, \cite{LS3}),
  the behaviors of the Ricci flow and the mean curvature flow are very similar.
  In our paper,  if we replace the Ricci flow by the mean curvature flow,  replace $|Rm|$ by $|A|^2$, and replace $|R|$ by $|H|^2$, then
  many theorems in this paper also hold.  For example, there is a mean curvature flow version of Theorem~\ref{thmin:et3}.
  The details of the mean curvature flow version will appear elsewhere.
   \label{rmkin:RFMCF}
 \end{remarkin}

 \vspace{0.1in}
 The organization of this paper is as follows. In section 2, we review some elementary results and fix the notations.
 In section 3, we develop the main estimates.   Then we apply these estimates to prove the extension theorems,
 gap theorems, and some other theorems in section 4.\\

 \noindent {\bf Acknowledgment}
 The author would like to thank Xiaodong Cao, Jian Song, Haozhao Li, and Yuanqi Wang for helpful discussions during the preparation of this paper.

\section{Preliminaries}
Suppose $\{(X^m, g(t)), t \in I \subset \R\}$ is a Ricci flow solution, $m \geq 4$. The curvatures evolve by the following equations(c.f.~\cite{CLN}).

\begin{align}
\begin{cases}
  \D{R}{t} &= \Delta R + 2|Ric|^2, \\
  \D{R_{ij}}{t} &= \Delta R_{ij} + 2 R_{iklj}R_{kl} - 2 R_{ik}
  R_{kj}, \\
  \D{R_{ijkl}}{t} &= \Delta R_{ijkl} +2(B_{ijkl}-B_{ijlk} +B_{ikjl} -B_{iljk})  \notag \\
   & \quad \quad       -(R_{ip}R_{pjkl}
   +R_{jp}R_{ipkl}+R_{kp}R_{ijpl}+R_{lp}R_{ijkp}),
\end{cases}
\end{align}
where $B_{ijkl} \triangleq -R_{ipqj} R_{kpql}$.  It follows that
\begin{align}
\begin{cases}
  & \left( \D{}{t}- \Delta \right) R = 2|Ric|^2, \\
  & \left( \D{}{t}- \Delta \right) |Ric|^2 \leq 4|Rm||Ric|^2, \\
  & \left( \D{}{t}- \Delta \right) |Rm|^2 \leq 16|Rm|^3.
\end{cases}
\label{eqn:curvinequa}
\end{align}
We will use inequalities (\ref{eqn:curvinequa}) for the purpose of Moser iteration.\\

For simplicity of notations, we give some definitions.
\begin{definition}
  A Riemannian manifold $(X^m, g)$ is called $\kappa$-non-collapsed if for every geodesic ball $B(x, r) \subset X$ with the property
  $\displaystyle \sup_{B(x, r)} |Rm| \leq r^{-2}$, we have  $\Vol(B(x,r)) \geq \kappa r^m$.
  \label{dfn:knoncollapse}
\end{definition}

\begin{definition}
  Along the Ricci flow $\left\{ (X, g(t)), t \in I \right\}$ ($I$ is a connected interval in $\R$), define
  \begin{align*}
    O_g(t)=\sup_X |R|_{g(t)}, \quad P_g(t)=\sup_X |Ric|_{g(t)}, \quad Q_g(t)=\sup_X |Rm|_{g(t)}.
  \end{align*}
  We may omit the subindex ``$g$" if the flow is obvious in the content.
  \label{dfn:k30_1}
\end{definition}

\begin{definition}
 Define $\mathcal{L}(m,\kappa, I)$ be the moduli space of the Ricci flows $\left\{ (X, g(t)) | t \in I  \right\}$
satisfying the following properties.
\begin{itemize}
  \item $X$ is a complete Riemannian manifold of dimension $m$.
  \item $Q_g(t) <\infty$ for every $t \in I$.
  \item $(X, g(t))$ is $\kappa$-non-collapsed for every $t \in I$.
\end{itemize}
  \label{dfn:b23_2}
\end{definition}

\begin{definition}
 Define $\mathcal{M}(m,\kappa, I)$ be the moduli space of the Ricci flows $\left\{ (X, g(t)) | t \in I  \right\}$
satisfying the following properties.
\begin{itemize}
  \item $X$ is a complete Riemannian manifold of dimension $m$.
  \item $Q_g(t) \leq 2$ for every $t \in I$.
  \item $\displaystyle \lim_{t \to b} Q_g(t) \geq \frac{1}{2}$ where $b =\sup\left\{ t | t\in I \right\}$.
  \item $(X, g(t))$ is $\kappa$-non-collapsed for every $t \in I$.
\end{itemize}

Define $\mathcal{M}(m, \kappa)=\mathcal{M}(m, \kappa, [-1, 0])$.
  \label{dfn:b23_1}
\end{definition}

\begin{definition}
  Suppose $\left\{(X, g(t)), 0 \leq t<T<\infty \right\}$ is a Ricci flow solution with singular time $T$.
  A quantity $f$ (which may be $|Rm|, |Ric|, |R|$)
  is called of type-I if
  \begin{align*}
    \limsup_{t \to T} |T-t| \sup_X |f(\cdot, t)| <\infty.
  \end{align*}
  $f$ is called at least of type-I if
  \begin{align*}
    \limsup_{t \to T} |T-t| \sup_X |f(\cdot, t)| >0.
  \end{align*}
  The singular time is called of type-I if $|Rm|$ is of type-I.
  \label{dfn:typeI}
\end{definition}

\begin{definition}[GH-distance]
  Suppose $Z$ is a metric space, $A_1, A_2$ are two subsets of $Z$, then the Hausdorff distance, $d_H$ is
  \begin{align*}
    d_H(A_1, A_2)=\inf\left\{ r | A_2 \subset B(A_1, r), \;\textrm{and} \; A_1 \subset B(A_2, r) \right\}.
  \end{align*}

  Suppose $X$ and $Y$ are two metric spaces, the Gromov-Hausdorff distance is defined as
  \begin{align*}
    &\qquad d_{GH}(X, Y)\\
    &= \inf\left\{ d_H(i(X), j(Y)) | i:\; X \to Z, \quad j: Y \to Z \; \textrm{are isometric embeddings}, \; Z \; \textrm{is a metric space} \right\}.
  \end{align*}

  Suppose $X$ is a metric space with base point $x$, $Y$ is a metric space with base point $y$, the pointed-Gromov-Hausdorff distance
  is defined as
  \begin{align*}
   &\qquad d_{GH} \left(  (X, x), (Y, y) \right) \\
   &=\inf \left\{ r|
    \textrm{There exist a metric space $Z$ and isometric embeddings} \; i: B(x, \frac{1}{r}) \to Z, \; j: B(y, \frac{1}{r}) \to Z, \right. \\
   &\qquad \qquad \left.  \textrm{such that} \; d_{H} \left( B(i(x), \frac{1}{r}),  B(j(y), \frac{1}{r})\right)<r,
    \quad d(i(x), j(y))<r
     \right\}.
  \end{align*}
  \label{dfn:GHdist}
\end{definition}

\begin{definition}[$\epsilon$-approximation]
  Suppose $(X, x)$ is a metric space with base point $x$,
  $(Y, y)$ is a metric space with base point $y$.
  A map $\varphi: (X, x) \to (Y, y)$ is called an  $\epsilon$-approximation if
  \begin{itemize}
    \item   $d(\varphi(x), y)<\epsilon$.
    \item   $B(y, \frac{1}{\epsilon}) \subset \left\{ y \in Y | d(y, \varphi(B(x, \frac{1}{\epsilon})))
      <\epsilon \right\}$.
    \item   $|d(x_1, x_2) -d(\varphi(x_1), \varphi(x_2))|<\epsilon$ for every $x_1, x_2 \in B(x, \frac{1}{\epsilon})$.
  \end{itemize}
 \label{dfn:approximation}
\end{definition}

 It is not hard to see(c.f.~\cite{GLP}) the following property.
 \begin{proposition}

 \begin{itemize}
   \item If $d_{GH}((X, x), (Y, y))<\epsilon$, then there exists a $(10\epsilon)$-approximation $\varphi: X \to Y$.
   \item If there is an $\epsilon$-approximation $\varphi: (X, x) \to (Y, y)$, then $d_{GH}( (X, x), (Y, y))<10\epsilon$.
 \end{itemize}
 \label{prn:GHapprox}
 \end{proposition}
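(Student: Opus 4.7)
The proposition is a standard correspondence between the quantitative Gromov-Hausdorff distance of pointed metric spaces and the existence of approximation maps. Both directions are essentially bookkeeping arguments; the main subtlety is that the definitions involve balls of two different radii ($1/\epsilon$ in the GH definition and $1/\epsilon$ again in the approximation definition), so one must take care to choose the constants so that the ball containments come out correctly, which is exactly the reason a factor of $10$ appears.

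For the first direction, I would start from the definition of $d_{GH}((X,x),(Y,y))<\epsilon$, which supplies a metric space $Z$ and isometric embeddings $i\colon B(x,1/\epsilon)\to Z$, $j\colon B(y,1/\epsilon)\to Z$ with $d(i(x),j(y))<\epsilon$ and $d_H(i(B(x,1/\epsilon)),j(B(y,1/\epsilon)))<\epsilon$. The plan is to define $\varphi$ by choosing, for each $p\in B(x,1/\epsilon)$, a point $\varphi(p)\in B(y,1/\epsilon)$ with $d_Z(i(p),j(\varphi(p)))<\epsilon$ (extending arbitrarily outside the ball). Then $d(\varphi(x),y)\le d_Z(j(\varphi(x)),i(x))+d_Z(i(x),j(y))<2\epsilon$, and the near-isometry property on $B(x,1/(10\epsilon))$ follows from the triangle inequality applied in $Z$, giving discrepancy at most $2\epsilon$. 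For the approximate surjectivity, given $q\in B(y,1/(10\epsilon))$, the Hausdorff condition provides $p'\in B(x,1/\epsilon)$ with $d_Z(i(p'),j(q))<\epsilon$; one then checks that such a $p'$ necessarily lies in $B(x,1/(10\epsilon))$ (using $1/(10\epsilon)+\epsilon+2\epsilon<1/\epsilon$ for small $\epsilon$) and that $d(\varphi(p'),q)<10\epsilon$. This is the delicate step and the reason why the constant $10$ is comfortable.

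For the second direction, the plan is to construct an admissible $Z$ directly as the disjoint union $X\sqcup Y$ endowed with the pseudometric
\[
 d_Z(p,q)\;=\;\inf_{p'\in X}\bigl(d_X(p,p')+d_Y(\varphi(p'),q)\bigr)+\epsilon, \qquad p\in X,\; q\in Y,
\]
and $d_Z=d_X$ on $X$, $d_Z=d_Y$ on $Y$ (symmetrized). Verifying the triangle inequality is routine once one uses the near-isometry of $\varphi$ on the large ball $B(x,1/\epsilon)$. With this $Z$, the inclusions are isometric, the base points satisfy $d_Z(x,y)\le d_Y(\varphi(x),y)+\epsilon<2\epsilon$, and the Hausdorff distance between $B(x,1/(10\epsilon))$ and $B(y,1/(10\epsilon))$ in $Z$ is controlled by $10\epsilon$ using the $\epsilon$-density property of $\varphi$ together with the almost-isometry to transfer points between balls of slightly different radii.

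The main obstacle in both directions is not any deep inequality but the careful comparison of the two ball radii ($1/\epsilon$ versus $1/(10\epsilon)$): one must verify that the points produced by the Hausdorff/density hypothesis, which a priori only live in the larger ball, can be relocated into (or their images stay inside) the smaller ball after incurring the additive $\epsilon$ errors. Choosing the constant $10$ absorbs these losses uniformly. I would therefore organize the proof as two short lemmas, each beginning with the choice of the ambient $Z$ (given in direction (1), constructed in direction (2)) and then performing the triangle-inequality estimates on the two relevant ball scales.
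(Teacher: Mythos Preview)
The paper does not supply a proof of this proposition at all: it is stated as a standard fact and simply referenced to~\cite{GLP}. So there is no ``paper's own proof'' to compare against.

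Your outline is the standard argument and is essentially correct. One small point of care: in the paper's definition of $d_{GH}$ the Hausdorff condition is $d_H\bigl(B(i(x),1/r),\,B(j(y),1/r)\bigr)<r$, where the balls are taken \emph{in the ambient space $Z$}, not the images $i(B(x,1/r))$ and $j(B(y,1/r))$. In your first direction you wrote the latter; the two sets can differ since $Z$ may contain extra points near $i(x)$ or $j(y)$. This does not break the argument---with $d(i(x),j(y))<\epsilon$ one has $j(B(y,1/\epsilon))\subset B(i(x),1/\epsilon+\epsilon)$, and the slack in the constant $10$ absorbs the resulting additive losses---but you should phrase the point-selection step accordingly. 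With that adjustment (and the routine verification of the triangle inequality for your glued metric in the second direction), your proof goes through.
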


\section{Some Curvature Estimates along the Ricci flow}

\subsection{Estimate Riemannian curvature by integration of Ricci curvature on a time period.}

\begin{lemma}
  For every $\delta>0$, there exists an $\epsilon=\epsilon(m, \kappa, \delta)$ such that the following property holds.

  If $\mathbf{G}=\left\{ (X, x_0, g(t)), -1 \leq t  \leq 0 \right\}$,
  $\mathbf{H}=\left\{ (Y, y_0, h(t)), -1 \leq t \leq 0 \right\}$
  are two Ricci flows in the moduli space $\mathcal{M}(m, \kappa)$ satisfying
  \begin{itemize}
    \item  $d_{GH}\left\{ (\Omega_X, x_0, g(0)),  (\Omega_Y, y_0, h(0)) \right\}<\epsilon$,  where $\Omega_X=\overline{B_{g(0)}(x_0, 1)}$, $\Omega_Y=\overline{B_{h(0)}(y_0, 1)}$,
    \item  $\displaystyle \max\left\{ |Rm|_{g(0)}(x_0),  |Rm|_{h(0)}(y_0) \right\} \geq \frac{1}{2}$,
  \end{itemize}
  then we have
  \begin{align*}
    \left| \log \frac{|Rm|_{h(0)}(y_0)}{|Rm|_{g(0)}(x_0)} \right| <  \delta.
  \end{align*}
  \label{lma:edric1}
\end{lemma}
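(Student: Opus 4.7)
The plan is to argue by contradiction via Hamilton's Cheeger--Gromov compactness theorem for Ricci flows, combined with a rigidity consequence of the Myers--Steenrod theorem. Suppose the statement fails for some $\delta_0>0$. Then one can extract sequences
\[
\mathbf{G}_i=\{(X_i, x_0^i, g_i(t))\}_{t\in[-1,0]},\qquad
\mathbf{H}_i=\{(Y_i, y_0^i, h_i(t))\}_{t\in[-1,0]}
\]
in $\mathcal{M}(m,\kappa)$ with
\[
d_{GH}\bigl((\Omega_{X_i}, x_0^i, g_i(0)), (\Omega_{Y_i}, y_0^i, h_i(0))\bigr)\to 0,\qquad
\max\bigl\{|Rm|_{g_i(0)}(x_0^i),\, |Rm|_{h_i(0)}(y_0^i)\bigr\}\ge \tfrac{1}{2},
\]
yet $\bigl|\log\bigl(|Rm|_{h_i(0)}(y_0^i)/|Rm|_{g_i(0)}(x_0^i)\bigr)\bigr|\ge \delta_0$ for every $i$.

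By the definition of $\mathcal{M}(m,\kappa)$, we have $|Rm|\le 2$ on $[-1,0]\times X_i$ together with $\kappa$-non-collapsing. Shi's local derivative estimates then provide uniform $C^k$-bounds on $Rm$ on $[-\tfrac{1}{2},0]$ for every $k$, while the curvature bound combined with non-collapsing yields a positive lower injectivity radius bound $\iota_0=\iota_0(m,\kappa)$ at every basepoint and time in $[-\tfrac{1}{2},0]$. Hamilton's compactness theorem therefore allows us, after passing to a subsequence, to extract pointed $C^\infty$ Cheeger--Gromov limit Ricci flows
\[
\mathbf{G}_\infty=\{(X_\infty, x_0^\infty, g_\infty(t))\},\qquad
\mathbf{H}_\infty=\{(Y_\infty, y_0^\infty, h_\infty(t))\}
\]
on $[-\tfrac{1}{2},0]$. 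Smooth convergence at $t=0$ gives pointwise convergence of curvatures at the basepoints, so (after passing to a further subsequence and relabeling if necessary so that the first term dominates in the $\max$) we have $|Rm|_{g_\infty(0)}(x_0^\infty)\ge \tfrac{1}{2}$.

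Pointed Cheeger--Gromov convergence also implies pointed Gromov--Hausdorff convergence of the closed time-$0$ unit balls about the basepoints. Consequently the triangle inequality combined with the hypothesis $d_{GH}(\Omega_{X_i},\Omega_{Y_i})\to 0$ forces the limit unit balls $\overline{B_{g_\infty(0)}(x_0^\infty,1)}$ and $\overline{B_{h_\infty(0)}(y_0^\infty,1)}$ to be isometric as pointed compact metric spaces. Since both limit metrics are $C^\infty$ and both basepoints are interior points of their unit balls, the local version of the Myers--Steenrod theorem upgrades this distance-preserving correspondence to a smooth Riemannian isometry on a neighborhood of $x_0^\infty$ sending $x_0^\infty\mapsto y_0^\infty$. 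Therefore
\[
|Rm|_{g_\infty(0)}(x_0^\infty)=|Rm|_{h_\infty(0)}(y_0^\infty),
\]
which contradicts $|Rm|_{g_\infty(0)}(x_0^\infty)\ge \tfrac{1}{2}$ together with the limit form of the log-ratio bound.

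The main obstacle is the passage from the purely metric statement ``the limit unit balls are GH-isometric'' to the Riemannian statement ``the two metrics agree in a neighborhood of the basepoints.'' This relies on the local Myers--Steenrod theorem and on the fact that the pointed GH distance, through its basepoint-closeness clause, genuinely pins the basepoints together in the limit. A minor subtlety is that the second basepoint curvature could a priori drop to zero in the Cheeger--Gromov limit, but this only strengthens the contradiction since equality would then be impossible with a quantity bounded below by $\tfrac{1}{2}$.
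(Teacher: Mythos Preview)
Your proof is correct and follows essentially the same route as the paper: argue by contradiction, extract Cheeger--Gromov limits of both sequences via Hamilton's compactness (the paper works on $(-1,0]$ rather than invoking Shi on $[-\tfrac12,0]$, but this is cosmetic), use the vanishing of the GH distance to identify the two limit unit balls as pointed metric spaces, and conclude equality of the basepoint curvatures to reach a contradiction. You are more explicit than the paper about the Myers--Steenrod step that upgrades the metric isometry to a smooth one; the paper simply writes $\Omega_{X_\infty}=\Omega_{Y_\infty}$ and invokes smooth convergence.
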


\begin{proof}
  If this lemma was wrong, then there exists a constant $\delta_0>0$ such that no matter how small $\epsilon$ is, one can have two
  Ricci flows in $\mathcal{M}(m, \kappa)$ violating the given property with couple $(\delta_0, \epsilon)$.
  Therefore,  there exist two sequences of Ricci flows
  \begin{align*}
    &\mathbf{G}_i =\left\{ (X_i, x_i, g_i(t)), -1 \leq t \leq 0 \right\} \in \mathcal{M}(m, \kappa), \\
    &\mathbf{H}_i =\left\{ (Y_i, y_i, h_i(t)), -1 \leq t \leq 0 \right\} \in \mathcal{M}(m, \kappa),
  \end{align*}
  such that
  \begin{align}
    \begin{cases}
    &\max\left\{ |Rm|_{g_i(0)}(x_i),  |Rm|_{h_i(0)}(y_i) \right\} \geq \frac{1}{2}, \\
    &\left| \log \frac{|Rm|_{h_i(0)}(y_i)}{|Rm|_{g_i(0)}(x_i)} \right| \geq  \delta_0,\\
    &d_{GH}\left\{ (\Omega_{X_i}, x_i, g_i(0)),  (\Omega_{Y_i}, y_i, h_i(0)) \right\}<\epsilon_i \to 0.
    \end{cases}
    \label{eqn:dbig}
  \end{align}
  By the curvature bound and $\kappa$-non-collapsing condition, we can apply Hamilton's compactness theorem
  to obtain smooth convergence(c.f.~\cite{Ha2}):
  \begin{align*}
    &\left\{ (X_i, x_i, g_i(t)), -1 < t \leq 0 \right\} \stackrel{C^{\infty}}{\longrightarrow}
    \left\{ (X_{\infty}, x_{\infty}, g_{\infty}(t)), -1 < t \leq 0 \right\},\\
    &\left\{ (Y_i, y_i, h_i(t)), -1 < t \leq 0 \right\} \stackrel{C^{\infty}}{\longrightarrow}
    \left\{ (Y_{\infty}, y_{\infty}, h_{\infty}(t)), -1 < t \leq 0 \right\}.
  \end{align*}
  Since $d_{GH}\left\{ (\Omega_{X_i}, x_i, g_i(0)),  (\Omega_{Y_i}, y_i, h_i(0)) \right\}<\epsilon_i \to 0$, we have
  \begin{align*}
    d_{GH}\left\{ (\Omega_{X_{\infty}}, x_{\infty}, g_{\infty}(0)),
       (\Omega_{Y_{\infty}}, y_{\infty}, h_{\infty}(0)) \right\}=0.
  \end{align*}
  Denote $\Omega_{\infty}=\Omega_{X_{\infty}}=\Omega_{Y_{\infty}}$. Note that $\Omega_{\infty}$ is a smooth unit geodesic ball with
  center $p_{\infty}=x_{\infty}=y_{\infty}$.  By the smooth convergence, we see that
  \begin{align*}
    \begin{cases}
     &\lim_{i \to \infty} |Rm|_{h_i(0)}(y_i)=|Rm|(y_{\infty})=|Rm|(p_{\infty})=|Rm|(x_{\infty})=\lim_{i \to \infty} |Rm|_{g_i(0)}(x_i), \\
     &\frac{1}{2} \leq |Rm|(p_{\infty}) \leq 2.
    \end{cases}
  \end{align*}
  It follows that
  \begin{align*}
    \lim_{i \to \infty} \frac{|Rm|_{h_i(0)}(y_i)}{|Rm|_{g_i(0)}(x_i)}=1, \quad
    \Longrightarrow \quad
    \lim_{i \to \infty} \left| \log \frac{|Rm|_{h_i(0)}(y_i)}{|Rm|_{g_i(0)}(x_i)} \right| =0.
  \end{align*}
  Therefore, $\left| \log \frac{|Rm|_{h_i(0)}(y_i)}{|Rm|_{g_i(0)}(x_i)} \right|<\delta_0$ for large $i$.  This contradicts to
  the condition (\ref{eqn:dbig})!
\end{proof}

\begin{figure}[h]
 \begin{center}
  \psfrag{t-axis}[c][c]{$t$}
  \psfrag{t-1}[l][l]{$t=-1$}
  \psfrag{t0}[c][c]{$t=0$}
  \psfrag{t1}[c][c]{$t=t_1$}
  \psfrag{tk}[c][c]{$t=K$}
  \psfrag{X}[l][l]{$X$}
  \psfrag{x0}[c][c]{$x_0$}
  \psfrag{y0}[c][c]{$y_0$}
  \psfrag{q+}[c][c]{$\frac12 \leq Q(t) \leq 2$}
  \psfrag{q-}[c][c]{$0 \leq Q(t) \leq 2$}
  \psfrag{dgh}[c][c]{$d_{GH}(B_{g(0)}(z_0,1), B_{g(t_1)}(z_0,1))>\epsilon$}
  \includegraphics[width=0.8\columnwidth]{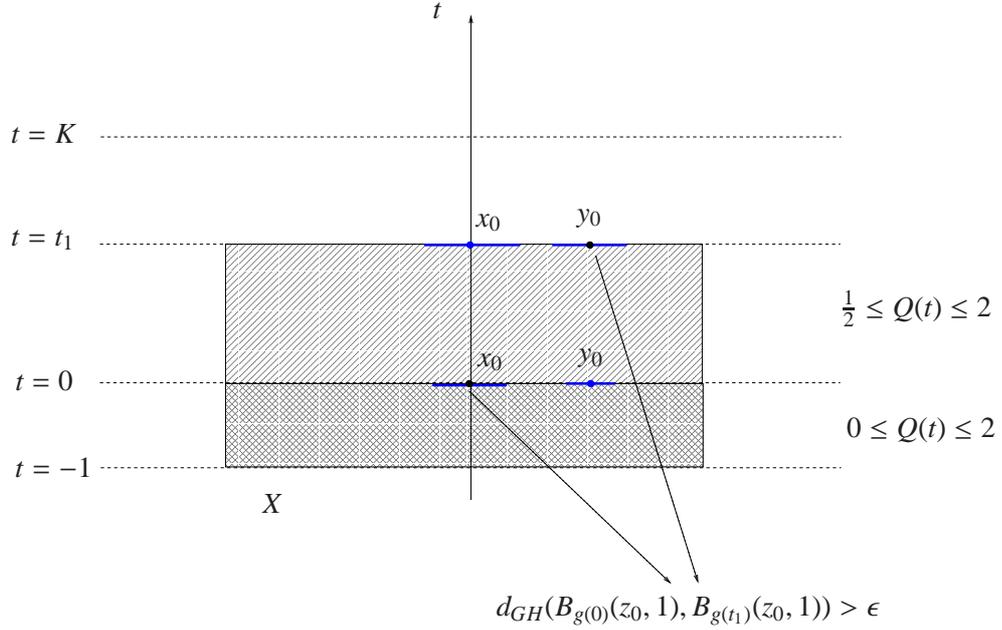}
  \caption{Gap of $\int P(t)dt$}
  \label{fig:Ricintgap}
 \end{center}
\end{figure}

\begin{lemma}
  There exists a constant $\epsilon_0=\epsilon_0(m, \kappa)$ such that the following property holds.

  Suppose $K \geq 0$, $\left\{ (X, g(t)), -1 \leq t \leq K \right\} \in \mathcal{L}(m, \kappa, [-1, K])$,
  $Q_g(0)=1$, and $Q_g(t)\leq 2$ for every $t \in [-1, 0]$.
  If $t_1 > 0$ is the first time such that $|\log Q_g(t)|=\log 2$, then
  \begin{align}
    \int_0^{t_1} P_g(t)dt > \epsilon_0.
  \label{eqn:b23_11}
  \end{align}
  \label{lma:b23_1}
\end{lemma}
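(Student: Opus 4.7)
The plan is to argue by contradiction and reduce to Lemma~\ref{lma:edric1}. Suppose the conclusion fails, so there is a sequence of flows $\{(X_i, g_i(t)),\, -1 \leq t \leq K_i\} \in \mathcal{L}(m, \kappa, [-1, K_i])$ with $Q_{g_i}(0) = 1$, $Q_{g_i}(t) \leq 2$ on $[-1, 0]$, first times $t_1^i > 0$ at which $|\log Q_{g_i}(t)| = \log 2$, yet $A_i := \int_0^{t_1^i} P_{g_i}(t)\, dt \to 0$. Passing to a subsequence I may assume either (A) $Q_{g_i}(t_1^i) = 2$ for every $i$, and then pick a near-maximum point $z_i \in X_i$ with $|Rm|_{g_i(t_1^i)}(z_i) \to 2$, or (B) $Q_{g_i}(t_1^i) = \tfrac{1}{2}$ for every $i$, and pick $z_i$ with $|Rm|_{g_i(0)}(z_i) \to 1$. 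This common basepoint $z_i$ will be used to compare the flow at time $0$ with the flow at time $t_1^i$.

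The analytic core is to show that small $A_i$ forces small Gromov--Hausdorff distortion between the two time slices. Since $\partial_t g = -2\,\mathrm{Ric}$ and $|\mathrm{Ric}(v, v)| \leq P_{g_i}(t)\, g_i(t)(v, v)$ as symmetric bilinear forms, Gronwall-type integration gives the pointwise bound $e^{-2 A_i}\, g_i(0) \leq g_i(t_1^i) \leq e^{2 A_i}\, g_i(0)$; hence the identity map on $X_i$ is an $e^{A_i}$-biLipschitz equivalence between $g_i(0)$ and $g_i(t_1^i)$, so
\[
d_{GH}\Bigl( \bigl(\overline{B_{g_i(0)}(z_i,1)},\, z_i,\, g_i(0)\bigr),\; \bigl(\overline{B_{g_i(t_1^i)}(z_i,1)},\, z_i,\, g_i(t_1^i)\bigr) \Bigr) \longrightarrow 0.
\]
Now set $\mathbf{G}_i = \{(X_i, z_i, g_i(t + t_1^i))\}_{t \in [-1, 0]}$ and $\mathbf{H}_i = \{(X_i, z_i, g_i(t))\}_{t \in [-1, 0]}$; both lie in $\mathcal{M}(m, \kappa)$, since for $\mathbf{G}_i$ one uses that $Q_{g_i}(t) \leq 2$ throughout $[-1, t_1^i]$ (because $t_1^i$ is the \emph{first} time after $0$ at which $|\log Q_{g_i}|$ reaches $\log 2$) while $Q_{g_i}(t_1^i) \geq \tfrac{1}{2}$ in both cases. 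Moreover $\max\{|Rm|_{g_i(0)}(z_i),\, |Rm|_{g_i(t_1^i)}(z_i)\} \geq \tfrac{1}{2}$ eventually. Applying Lemma~\ref{lma:edric1} with $\delta := \tfrac{\log 2}{2}$ yields, for all large $i$,
\[
\left| \log \frac{|Rm|_{g_i(0)}(z_i)}{|Rm|_{g_i(t_1^i)}(z_i)} \right| < \frac{\log 2}{2}.
\]

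Finally, the choice of $z_i$ contradicts this bound. In Case (A), $|Rm|_{g_i(0)}(z_i) \leq Q_{g_i}(0) = 1$ while $|Rm|_{g_i(t_1^i)}(z_i) \to 2$, so $|\log\,\text{ratio}| \geq \log 2 - o(1)$; in Case (B), $|Rm|_{g_i(0)}(z_i) \to 1$ while $|Rm|_{g_i(t_1^i)}(z_i) \leq \tfrac{1}{2}$, again giving $|\log\,\text{ratio}| \geq \log 2 - o(1)$. For $i$ sufficiently large this exceeds $\tfrac{\log 2}{2}$, contradicting Lemma~\ref{lma:edric1}. The principal obstacle is really the bookkeeping: arranging both comparison flows inside $\mathcal{M}(m, \kappa)$ with the same basepoint, and splitting into the two scenarios (A)/(B) so that Lemma~\ref{lma:edric1} becomes the right tool; the metric-continuity step that converts small $\int P$ into small Gromov--Hausdorff distortion is a routine Gronwall estimate along the flow.
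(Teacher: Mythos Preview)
Your proof is correct and follows essentially the same approach as the paper: set up the original flow and its time-shift by $t_1$ as two elements of $\mathcal{M}(m,\kappa)$ with a common basepoint, use the Ricci bound $\int_0^{t_1} P\,dt$ to control the Gromov--Hausdorff distortion between the time-$0$ and time-$t_1$ unit balls, and then invoke Lemma~\ref{lma:edric1} to conclude. The paper does this as a direct argument (choosing near-maximal points $x_0,y_0$ at the two times, showing $|\log(\text{ratio})|\geq \log\tfrac{9}{5}$, and deducing an explicit lower bound $\epsilon_0$ from the contrapositive of Lemma~\ref{lma:edric1}), whereas you package it as a contradiction/sequential argument; this is a cosmetic difference rather than a different route.
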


\begin{proof}
  For simplicity of notation, let $g$ be the default flow. So  we denote $Q_g(t)$ by $Q(t)$,
  $P_g(t)$ by $P(t)$, etc.

  Choose $x_0, y_0\in X$ such that
  \begin{align*}
   & |Rm|_{g(0)}(x_0) \geq \frac{9}{10}Q(0)=\frac{9}{10}.\\
   & |Rm|_{g(t_1)}(y_0) \geq \frac{9}{10}Q(t_1), \Rightarrow \frac{9}{5} \leq |Rm|_{g(t_1)}(y_0) \leq 2,
   \; \textrm{or} \; \frac{9}{20} \leq|Rm|_{g(t_1)}(y_0) \leq \frac{1}{2}.
 \end{align*}
  Define $h(t)=g(t-t_1)$. Clearly, we have
  \begin{align*}
    & \left\{ (X, x_0, g(t)), -1 \leq t  \leq 0 \right\} \in \mathcal{M}(m,\kappa), \\
    & \left\{ (X, y_0, h(t)), -1 \leq t \leq 0  \right\} \in \mathcal{M}(m,\kappa).
  \end{align*}
  If $|Rm|_{g(t_1)}(y_0) \in [\frac{9}{5}, 2]$, then
  \begin{align*}
    \frac{|Rm|_{h(0)}(y_0)}{|Rm|_{g(0)}(y_0)}=\frac{|Rm|_{g(t_1)}(y_0)}{|Rm|_{g(0)}(y_0)}\geq \frac{|Rm|_{g(t_1)}(y_0)}{Q(0)} \geq \frac{9}{5},
    \quad |Rm|_{h(0)}(y_0) =|Rm|_{g(t_1)}(y_0) \geq \frac{1}{2}.
  \end{align*}
  If $|Rm|_{g(t_1)}(y_0) \in [\frac{9}{20}, \frac{1}{2}]$, then
  \begin{align*}
    \frac{|Rm|_{g(0)}(x_0)}{|Rm|_{h(0)}(x_0)} =\frac{|Rm|_{g(0)}(x_0)}{|Rm|_{g(t_1)}(x_0)} \geq \frac{|Rm|_{g(0)}(x_0)}{Q(t_1)}
    =2|Rm|_{g(0)}(x_0) \geq \frac{9}{5}, \quad |Rm|_{g(0)}(x_0) \geq \frac{9}{10} >\frac{1}{2}.
  \end{align*}
  So there exists a point $z_0 \in X$, which may be $x_0$ or $y_0$ such that the following properties hold.
  \begin{align*}
    \max\left\{ |Rm|_{g(0)}(z_0), |Rm|_{h(0)}(z_0) \right\} \geq \frac{1}{2}, \quad
    \left|\log \frac{|Rm|_{g(0)}(z_0)}{|Rm|_{h(0)}(z_0)} \right| \geq \log \frac{9}{5}.
  \end{align*}
  By Lemma~\ref{lma:edric1}, there exists a constant $\epsilon=\epsilon(m, \kappa)$ such that
   \begin{align}
    d_{GH}\left\{ (B_{g(0)}(z_0, 1), z_0, g(0)),  (B_{h(0)}(z_0, 1), z_0, h(0)) \right\}>\epsilon.
  \label{eqn:b23_9}
  \end{align}
  Choose every two points $w,z \in X$, we have
  \begin{align}
    \left|\log  \frac{d_{h(0)}(w,z)}{d_{g(0)}(w,z)} \right|=
    \left|\log  \frac{d_{g(t_1)}(w,z)}{d_{g(0)}(w,z)} \right| \leq  \int_0^{t_1} P(t)dt.
    \label{eqn:b23_1}
  \end{align}

  In order to finish our proof, it suffices to prove the following Claim.

  \begin{clm}
  Inequalities (\ref{eqn:b23_9}) and (\ref{eqn:b23_1}) imply
  that there exists a constant $\epsilon_0=\epsilon_0(m,\kappa)$ such that
  \begin{align}
    \int_0^{t_1} P(t)dt >\epsilon_0.
  \label{eqn:clmRicgap}
  \end{align}
  \end{clm}

  Let $\xi=\int_0^{t_1} P(t)dt$. Equation (\ref{eqn:b23_1}) becomes
  \begin{align}
    \left|\log  \frac{d_{g(t_1)}(w, z)}{d_{g(0)}(w, z)} \right| \leq \xi
  \label{eqn:dquotientxi}
  \end{align}
  for every two points $w, z \in X$. It follows that
  \begin{align*}
   B_{g(t_1)}(z_0, e^{-2\xi}) \subset B_{g(0)}(z_0, e^{-\xi}) \subset B_{g(t_1)}(z_0, 1).
  \end{align*}
  Let $\Omega_a=B_{g(0)}(z_0, 1), \; \Omega_b=B_{g(t_1)}(z_0, 1), \; \Omega'=B_{g(0)}(z_0, e^{-\xi})$.
  Therefore, we have
  \begin{align}
    &\qquad d_{GH}\left\{ (\Omega_a, z_0,  g(0)),  (\Omega_b, z_0, g(t_1)) \right\}  \nonumber\\
    &\leq d_{GH}\left\{ (\Omega_a, z_0, g(0)), (\Omega', z_0, g(0)) \right\}
    +d_{GH} \left\{ (\Omega', z_0, g(0)),  (\Omega', z_0, g(t_1)) \right\} \nonumber \\
    &\quad +d_{GH} \left\{ (\Omega', z_0, g(t_1)),  (\Omega_b, z_0, g(t_1)) \right\}.
    \label{eqn:c6_0}
  \end{align}
  Since $(\Omega_a, g(0)), \; (\Omega', g(0))$ are two sub-metric-spaces of $(X, g(0))$, $\Omega' \subset \Omega_a$, and  $\Omega_a$
  is in the $(1-e^{-\xi})$-neighborhood
  of $\Omega'$, therefore by the definition of GH-distance, we have
  \begin{align}
    d_{GH} \left\{ (\Omega_a, z_0, g(0)),  (\Omega', z_0, g(0))  \right\} < 1-e^{-\xi}.
    \label{eqn:c6_1}
  \end{align}
  Note that $B_{g(t_1)}(z_0, e^{-2\xi}) \subset \Omega' \subset \Omega_b$, we obtain
  \begin{align}
    d_{GH}\{ (\Omega', z_0, g(t_1)), (\Omega_b, z_0, g(t_1))\} < 1-e^{-2\xi}.
    \label{eqn:c6_2}
  \end{align}
  Consider the identity map:
  \begin{align*}
    Id: (\Omega', g(0)) &\mapsto (\Omega', g(t_1)), \\
             x &\mapsto \; Id(x)=x.
  \end{align*}
  By inequality (\ref{eqn:dquotientxi}) and the fact that
  \begin{align*}
    \max \left\{ \diam_{g(0)}(\Omega'), \; \diam_{g(t_1)}(\Omega') \right\}<2,
  \end{align*}
  we see that $Id$ is a $2(1-e^{-\xi})$-approximation of $(\Omega', z_0, g(0))$
  and $(\Omega', z_0, g(t_1))$. It follows that
  \begin{align}
    d_{GH} \left\{  (\Omega', z_0, g(0)), (\Omega', z_0, g(t_1)) \right\} <20(1-e^{-\xi}).
    \label{eqn:c6_3}
  \end{align}
  Combine inequalities (\ref{eqn:c6_0}), (\ref{eqn:c6_1}), (\ref{eqn:c6_2}),
  and (\ref{eqn:c6_3}), we obtain
  \begin{align}
    d_{GH}\left\{ (\Omega_a, z_0, g(0)),  (\Omega_b, z_0, g(t_1)) \right\}<21(1-e^{-\xi}) + (1-e^{-2\xi}).
    \label{eqn:c6_4}
  \end{align}
  From inequality (\ref{eqn:b23_9}), (\ref{eqn:b23_1}), and (\ref{eqn:c6_4}), we obtain
  \begin{align}
    \epsilon< 21(1-e^{-\xi}) + (1-e^{-2\xi}).
   \label{eqn:c6_5}
  \end{align}
  This forces $\xi> \epsilon_0=\epsilon_0(\epsilon(m, \kappa))=\epsilon_0(m, \kappa)$.
  So we finish the proof of the Claim.
\end{proof}

\begin{theorem}
  Suppose $K \geq 0$, $\left\{ (X, g(t)), -1 \leq t \leq K \right\} \in \mathcal{L}(m, \kappa, [-1, K])$,
  $Q(0)=1$, and $Q(t)\leq 2$ for every $t \in [-1, 0]$. Then we have
  \begin{align}
    Q(K)< 2^{\frac{\int_0^K P(t)dt}{\epsilon_0}+1},
  \label{eqn:b23_10}
  \end{align}
  where $\epsilon_0=\epsilon_0(m, \kappa)$ is the constant in Lemma~\ref{lma:b23_1}.
  \label{thm:b23_1}
\end{theorem}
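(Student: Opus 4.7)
The plan is to iterate Lemma~\ref{lma:b23_1} along a sequence of successive doubling times of $Q(t)$, exploiting the parabolic scale-invariance of $\int P(t)\,dt$ to convert each doubling into $\epsilon_0$ worth of integrated Ricci norm.

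Set $\sigma_0 = 0$ and inductively let $\sigma_{k+1}$ be the first $t \in (\sigma_k, K]$ at which $Q(t) = 2\,Q(\sigma_k)$; if no such $t$ exists, terminate the construction and set $N := k$. Since $Q(0) = 1$, induction gives $Q(\sigma_k) = 2^k$, and by continuity and the first-hitting-time definition we have $Q(t) \le 2^{k}$ on $[0, \sigma_k]$. Combined with the hypothesis $Q \le 2$ on $[-1, 0]$, this yields $Q(t) \le 2^{k+1}$ on all of $[-1, \sigma_k]$ at every step.

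For each $k < N$, apply Lemma~\ref{lma:b23_1} to the parabolic rescaling
\[
\tilde g_k(\tau) := Q(\sigma_k)\, g\!\left(\sigma_k + \tau/Q(\sigma_k)\right), \qquad \tau \in \bigl[-1,\,(K-\sigma_k)Q(\sigma_k)\bigr].
\]
Completeness and $\kappa$-non-collapsing are scale-invariant, $\tilde Q_k(0) = 1$, and for $\tau \in [-1,0]$ the corresponding original time lies in $[\sigma_k - 2^{-k}, \sigma_k] \subset [-1,\sigma_k]$ (since $\sigma_k \ge 0$ and $2^{-k} \le 1$), a set on which $Q \le 2^{k+1}$, so $\tilde Q_k \le 2$ there. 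Let $\tilde t_1^{(k)}$ be the first rescaled time at which $|\log \tilde Q_k| = \log 2$; because $\sigma_{k+1}$ corresponds under the rescaling to a doubling of $\tilde Q_k$, whereas Lemma~\ref{lma:b23_1} returns the first doubling \emph{or} halving, we have $\sigma_k + \tilde t_1^{(k)}/Q(\sigma_k) \le \sigma_{k+1}$. The scale-invariance $\tilde P_k(\tau)\,d\tau = P(t)\,dt$ together with the lemma then gives $\int_{\sigma_k}^{\sigma_{k+1}} P(t)\,dt \ge \int_0^{\tilde t_1^{(k)}} \tilde P_k(\tau)\,d\tau > \epsilon_0$. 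Summing over $k = 0, \ldots, N-1$ yields $N\epsilon_0 < \int_0^K P(t)\,dt$, i.e.\ $N < \int_0^K P(t)\,dt/\epsilon_0$. Since no $t \in (\sigma_N, K]$ satisfies $Q(t) = 2^{N+1}$, continuity forces $Q < 2^{N+1}$ throughout $[\sigma_N, K]$, and therefore $Q(K) < 2^{N+1} < 2^{\int_0^K P(t)\,dt/\epsilon_0 + 1}$, which is (\ref{eqn:b23_10}).

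The main subtlety is maintaining the hypothesis $\tilde Q_k \le 2$ on the full unit backward rescaled interval at every step of the iteration. Restricting the sequence $\{\sigma_k\}$ to track only doublings, rather than the mixed doubling-or-halving events returned directly by Lemma~\ref{lma:b23_1}, is exactly what makes the a priori bound $Q \le 2^{k+1}$ on $[-1, \sigma_k]$ inductively available; if one iterated along halvings as well, the numbers $Q(\sigma_k)$ could shrink, the backward interval $[\sigma_k - 1/Q(\sigma_k),\sigma_k]$ would grow and no longer fit inside the controlled range $[-1,\sigma_k]$.
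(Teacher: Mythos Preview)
Your proof is correct and follows essentially the same approach as the paper: your doubling times $\sigma_k$ coincide with the paper's $s_i = \inf\{t \ge 0 : Q(t) = 2^i\}$, and the parabolic rescaling and summation are identical. You are in fact slightly more careful than the paper in distinguishing the first doubling-\emph{or}-halving time $\tilde t_1^{(k)}$ supplied by Lemma~\ref{lma:b23_1} from the first doubling time corresponding to $\sigma_{k+1}$, correctly noting the former precedes the latter so that the integral inequality still transfers.
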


\begin{proof}
  Define  $s_i=\inf\left\{ t|t\geq 0, Q(t)=2^i \right\}$ for every nonnegative integer $i$.
  Clearly, $s_0=0$.  According to the choice of $s_i$, we have
  \begin{align*}
    s_i-Q^{-1}(s_i) \geq -1, \quad \sup_{X \times [ s_i-Q^{-1}(s_i), s_{i+1}]} |Rm|= Q(s_{i+1})
    =2Q(s_i)=2^{i+1}Q(0)=2^{i+1}.
  \end{align*}
  Let $g_i(t)=Q(s_i)g(Q^{-1}(s_i)t +s_i)$, then the flow
  $\{(X, g_i(t)), -1 \leq t \leq Q(s_i)(K-s_i)\}$ satisfies all the conditions in Lemma~\ref{lma:b23_1}.
  Note that $Q_{g_i}(Q(s_i)(s_{i+1}-s_i))=2$, it follows from Lemma~\ref{lma:b23_1} that
  \begin{align*}
    \int_{s_i}^{s_{i+1}} P(t)dt = \int_0^{Q(s_i)(s_{i+1}-s_i)} P_{g_i}(t)dt > \epsilon_0.
  \end{align*}
  Let $N$ be the largest $i$ such that $s_i \leq K$, we have
  \begin{align*}
    N\epsilon_0 < \int_0^{s_N} P(t)dt \leq \int_0^K P(t)dt, \quad \Rightarrow N<\frac{\int_0^K P(t)dt}{\epsilon_0}.
  \end{align*}
  For every $t \in [0, K]$, we obtain that
  \begin{align*}
   Q(t) \leq 2^{N+1} < 2^{\frac{\int_0^K P(t)dt}{\epsilon_0}+1}.
  \end{align*}
\end{proof}

\begin{corollary}
   Suppose $\left\{ (X, g(t)), -1 \leq t <0 \right\} \in \mathcal{L}(m, \kappa, [-1, 0))$,  $t=0$ is the singular time. Then
\begin{align}
  \int_{-1}^{0} P(t) dt =\infty.
\end{align}
  \label{cly:a7_1}
\end{corollary}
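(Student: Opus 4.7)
I will argue by contradiction and reuse the ``doubling interval'' machinery embedded in the proof of Theorem~\ref{thm:b23_1}. Suppose $\int_{-1}^{0} P(t)\,dt = A < \infty$. Since $t=0$ is singular we have $Q(t) \to \infty$ as $t \to 0^-$, so the strategy is to exhibit infinitely many pairwise disjoint subintervals of $(-1,0)$ on each of which $\int P > \epsilon_0$, where $\epsilon_0 = \epsilon_0(m,\kappa)$ is the constant from Lemma~\ref{lma:b23_1}. Summing the contributions will then force $\int_{-1}^{0} P(t)\,dt = \infty$, the desired contradiction.

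Concretely, fix any $s_0 \in (-1,0)$ and set $Q_0 := \sup_{t \in [-1,s_0]} Q(t)$, which is finite because $Q(t)<\infty$ on $[-1,0)$, $[-1,s_0]$ is compact, and $Q$ is continuous in $t$. Using $Q(t) \to \infty$ together with the continuity of $Q$, I inductively define $s_n$ for $n \geq 1$ as the first time after $s_{n-1}$ at which $Q(s_n) = 2^n Q_0$. Then $\{s_n\}$ is strictly increasing and $s_n \to 0$ (else $Q$ would stay bounded on $[-1,\lim s_n]$), and the first-hitting definition yields $Q(t) \leq 2^n Q_0$ for every $t \in [-1, s_n]$.

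The core step is precisely the parabolic rescaling already used in the proof of Theorem~\ref{thm:b23_1}. For each sufficiently large $n$, set $\lambda_n := 2^n Q_0$ and
\[
  \tilde g_n(\tau) := \lambda_n\, g\!\left(s_n + \tfrac{\tau}{\lambda_n}\right), \qquad K_n := \lambda_n(s_{n+1}-s_n).
\]
Then $\{(X, \tilde g_n(\tau))\} \in \mathcal{L}(m,\kappa,[-1,K_n])$ with $\tilde Q_{\tilde g_n}(0)=1$, $\tilde Q_{\tilde g_n}(\tau) \leq 1$ on $[-1,0]$, and $K_n$ is the first time after $0$ at which $\tilde Q_{\tilde g_n}$ doubles. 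Lemma~\ref{lma:b23_1} then gives $\int_0^{K_n} \tilde P_{\tilde g_n}(\tau)\,d\tau > \epsilon_0$, which in the original time coordinates reads $\int_{s_n}^{s_{n+1}} P(t)\,dt > \epsilon_0$. Summing over $n \geq N$ (for $N$ large enough that the rescaling fits) gives $\int_{-1}^{0} P(t)\,dt \geq \sum_{n \geq N} \epsilon_0 = \infty$, contradicting $A<\infty$.

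The only mildly delicate point, and the place I expect to spend any care at all, is verifying that the hypotheses of Lemma~\ref{lma:b23_1} really hold for $\tilde g_n$: the time-interval requirement $s_n - \lambda_n^{-1} \geq -1$ holds for all large $n$ because $\lambda_n \to \infty$ while $s_n \to 0$, and the curvature bound $\tilde Q_{\tilde g_n} \leq 2$ on $[-1,0]$ reduces to $Q(t) \leq \lambda_n$ for $t \in [s_n - \lambda_n^{-1}, s_n]$, which is immediate from the first-hitting property established above. Once this bookkeeping is in place the contradiction follows directly.
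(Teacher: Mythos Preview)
Your argument is correct and is essentially the same mechanism the paper uses: the paper's one-line proof just invokes Theorem~\ref{thm:b23_1} together with $Q(t)\to\infty$, whereas you have effectively unpacked the doubling-time argument from the proof of Theorem~\ref{thm:b23_1} (the rescalings $\tilde g_n$ and the appeal to Lemma~\ref{lma:b23_1} on each interval $[s_n,s_{n+1}]$) directly. One small point of phrasing: $K_n$ is the first time $\tilde Q_{\tilde g_n}$ reaches $2$, not necessarily the first time $|\log \tilde Q_{\tilde g_n}|=\log 2$ as in Lemma~\ref{lma:b23_1}; but since the lemma's $t_1$ satisfies $t_1\le K_n$, your inequality $\int_0^{K_n}\tilde P_{\tilde g_n}\,d\tau>\epsilon_0$ still follows.
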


\begin{proof}
  This follows from Theorem~\ref{thm:b23_1} and the fact $\displaystyle \lim_{t \to 0}Q(t)=\infty$.
\end{proof}

  \begin{corollary}
    Suppose $\left\{ (X, g(t)), -1 \leq t <0 \right\} \in \mathcal{L}(m, \kappa, [-1, 0))$,  $t=0$ is the singular time.
   If $\displaystyle \limsup_{t \to 0} P(t)|t|=C$, then we have
   \begin{align}
     Q(t) = o(|t|^{-\lambda}),
     \label{eqn:qspeed}
   \end{align}
   whenever $\lambda>\frac{C\log 2}{\epsilon_0}$
   with $\epsilon_0=\epsilon_0(m, \kappa)$ being  the constant in Theorem~\ref{thm:b23_1}.
   \label{cly:b24_1}
 \end{corollary}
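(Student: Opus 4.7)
The plan is to apply Theorem~\ref{thm:b23_1} on a sequence of windows $[s_i,0)$ whose endpoints $s_i$ approach the singular time $0$ and at which the pre-factor $Q(s_i)$ is completely under control. First I would unpack the hypothesis: for any $\eta>0$ there exists $t^{*}\in(-1,0)$ with
\begin{align*}
P(\tau)\leq \frac{C+\eta}{|\tau|}\qquad \text{for every } \tau\in[t^{*},0).
\end{align*}
The strategy is then to estimate $Q(t)$ on this window by rescaling the flow at a carefully chosen earlier time $s_i$ and invoking the theorem.

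The key step is the choice of these base times. Pick $i_0$ so large that $2^{i_0}>Q(-1)$, $s_{i_0}>t^{*}$, and $2^{-i_0}<1+t^{*}$, and for $i\geq i_0$ set $s_i=\inf\{\tau\in[-1,0):Q(\tau)\geq 2^{i}\}$. Then $Q(s_i)=2^{i}$, $Q(\tau)\leq 2^{i}$ for $\tau\in[-1,s_i]$, and because $t=0$ is singular $s_i\to 0^{-}$. Parabolically rescale by $\tilde g_i(\tau)=Q(s_i)\,g(s_i+\tau/Q(s_i))$; the definition of $s_i$ immediately gives $Q_{\tilde g_i}(0)=1$ and $Q_{\tilde g_i}(\tau)\leq 1\leq 2$ for every $\tau\in[-1,0]$, so the hypothesis of Theorem~\ref{thm:b23_1} is met. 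Applying the theorem with $K=Q(s_i)(t-s_i)$ and changing variables back yields, for every $t\in[s_i,0)$,
\begin{align*}
Q(t)<2\,Q(s_i)\cdot 2^{\frac{1}{\epsilon_0}\int_{s_i}^{t}P(\tau)\,d\tau}.
\end{align*}

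Once $i\geq i_0$ is fixed, the bound $P(\tau)\leq (C+\eta)/|\tau|$ gives $\int_{s_i}^{t}P(\tau)\,d\tau\leq (C+\eta)\log(|s_i|/|t|)$ for all $t\in[s_i,0)$, so setting $\alpha=(C+\eta)\log 2/\epsilon_0$ we obtain
\begin{align*}
|t|^{\alpha}Q(t)\leq 2\,Q(s_i)\,|s_i|^{\alpha}=:M_i,
\end{align*}
a constant independent of $t$. Hence $Q(t)=O(|t|^{-\alpha})$ as $t\to 0^{-}$. Given any $\lambda>C\log 2/\epsilon_0$, one first selects $\eta>0$ small enough that $\lambda>\alpha$, and then $|t|^{\lambda}Q(t)=|t|^{\lambda-\alpha}\cdot |t|^{\alpha}Q(t)\to 0$, which is precisely the desired conclusion $Q(t)=o(|t|^{-\lambda})$. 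The only real obstacle is arranging the hypothesis of Theorem~\ref{thm:b23_1} in the rescaled picture, and choosing $s_i$ to be the first time $Q$ crosses the dyadic level $2^{i}$ is exactly what makes this work on every scale.
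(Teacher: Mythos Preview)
Your proof is correct and follows essentially the same strategy as the paper: pick a base time $s_{i_0}$ at which $Q$ first crosses a large dyadic level, rescale so that $Q=1$ there with $Q\le 2$ on the preceding unit interval, and exploit the bound $P(\tau)\le (C+\eta)/|\tau|$ near the singularity. The one difference is in packaging. The paper re-iterates Lemma~\ref{lma:b23_1} level by level, bounding each ratio $|s_{i+1}|/|s_i|$ from above and then interpolating $Q(t)|t|^{\lambda}$ between consecutive $s_i$. You instead invoke Theorem~\ref{thm:b23_1} once at the single base point $s_{i_0}$ and integrate $P$ all the way from $s_{i_0}$ to an arbitrary $t$, obtaining directly $|t|^{\alpha}Q(t)\le 2\,Q(s_{i_0})|s_{i_0}|^{\alpha}$ with $\alpha=(C+\eta)\log 2/\epsilon_0<\lambda$. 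Since Theorem~\ref{thm:b23_1} is itself the iterated form of Lemma~\ref{lma:b23_1}, the two arguments are equivalent; yours is marginally cleaner because it avoids the explicit dyadic bookkeeping and the final interpolation step. Note also that you never actually use the sequence $(s_i)_{i\ge i_0}$---a single index $i_0$ suffices---so you could trim that from the write-up.
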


\begin{proof}
  Fix $\lambda>\frac{C\log 2}{\epsilon_0}$. Choose $\delta>0$ such that
  \begin{align}
    \lambda>\frac{(C+\delta)\log 2}{\epsilon_0}.
    \label{eqn:b24_0}
  \end{align}
  Since $\displaystyle \limsup_{t \to 0} P(t)|t|=C$, $\displaystyle \lim_{t \to 0}Q(t)=\infty$,
  we can choose $t_0=t_0(g, \delta)$ such that
  \begin{align}
    \begin{cases}
     &P(t)|t|<C+\delta, \quad \forall \; t \in [t_0, 0], \\
     &Q(t) \leq Q(t_0), \quad \forall \; t \in [-1, t_0],\\
     &Q(t_0)|1+t_0| \geq 1.
    \end{cases}
    \label{eqn:b24_1}
  \end{align}
  Define $s_i=\inf \left\{ t |t \geq t_0, Q(t)=2^i Q(t_0) \right\}$.  Clearly, $s_0=t_0$.

  Let $h_i(t)=Q(s_i)g(Q^{-1}(s_i)t + s_i)$, we can truncate a flow
  $\left\{ (X, h_i(t)), -1 \leq t <Q(s_i)|s_i| \right\}$ which satisfies the following properties.
  \begin{align}
    \begin{cases}
    &\left\{ (X, h_i(t)), -1 \leq t <Q(s_i)|s_i| \right\} \in \mathcal{L}(m, \kappa, [-1, Q(s_i)|s_i|)).\\
    &Q_{h_i}(0)=1,\quad Q_{h_i}(t) \leq 1, \; \forall \; t \in [-1, 0].\\
    &Q_{h_i}(Q(s_i)|s_{i+1}-s_i|)=2.
    \end{cases}
    \label{eqn:b24_2}
  \end{align}
  Therefore, Lemma~\ref{lma:b23_1} implies that
  \begin{align}
    \int_{s_i}^{s_{i+1}} P(t)dt = \int_0^{Q(s_i)|s_{i+1}-s_i|} P_{h_i}(t) dt>\epsilon_0.
    \label{eqn:b24_3}
  \end{align}
  On the other hand, we have
  \begin{align}
    P(t)< \frac{C+\delta}{|t|}, \; \Rightarrow \; \int_{s_i}^{s_{i+1}} P(t)dt < (C+\delta) \log \frac{|s_i|}{|s_{i+1}|}.
    \label{eqn:b24_4}
  \end{align}
  Combining inequality (\ref{eqn:b24_3}) and (\ref{eqn:b24_4}), we obtain
  \begin{align}
    \log \frac{|s_i|}{|s_{i+1}|} > \frac{\epsilon_0}{C+\delta}, \; \Rightarrow \;  \frac{|s_{i+1}|}{|s_i|} < e^{-\frac{\epsilon_0}{C+\delta}},\;
    \Rightarrow \; |s_i| < |s_0|e^{-\frac{i\epsilon_0}{C+\delta}}=|t_0|e^{-\frac{i\epsilon_0}{C+\delta}}.
    \label{eqn:b24_5}
  \end{align}
  Therefore, we have
  \begin{align}
    \lim_{i \to \infty} Q(s_i)|s_i|^{\lambda} \leq  \lim_{i \to \infty} Q(t_0)|t_0|^{\lambda}\left( 2 e^{-\frac{\lambda \epsilon_0}{C+\delta}} \right)^i=0.
    \label{eqn:b24_6}
  \end{align}
  The equality of (\ref{eqn:b24_6}) holds since equation (\ref{eqn:b24_0}) implies $2 e^{-\frac{\lambda \epsilon_0}{C+\delta}}<1$.

  Note that $s_i<s_{i+1}<0$. For every $t \in [s_i, s_{i+1}]$, we have $|s_{i+1}| \leq |t| \leq |s_i|$.  It follows that
  \begin{align*}
    Q(t)|t|^{\lambda} \leq Q(s_{i+1})|s_i|^{\lambda}=2Q(s_i)|s_i|^{\lambda} \to 0, \quad \textrm{as} \; i \to \infty.
  \end{align*}
  This yields that
  \begin{align*}
    \limsup_{t \to 0} Q(t)|t|^{\lambda}=0,  \; \Leftrightarrow \; Q(t)=o(|t|^{-\lambda}).
  \end{align*}

\end{proof}

\begin{corollary}
  Suppose $\left\{ (X, g(t)), -1 \leq t <0 \right\} \in \mathcal{L}(m, \kappa, [-1, 0))$,  $t=0$ is the singular time.
  Then
\begin{align}
  \limsup_{t \to 0} P(t)|t| \geq \frac{\epsilon_0}{\log 2},
  \label{eqn:b24_7}
\end{align}
  where $\epsilon_0=\epsilon_0(m, \kappa)$ is the constant in Theorem~\ref{thm:b23_1}.
  \label{cly:a6_2}
\end{corollary}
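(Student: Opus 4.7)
The plan is to argue by contradiction, combining the decay estimate in Corollary~\ref{cly:b24_1} with the universal type-I lower bound \eqref{eqnin:rmgap} for $|Rm|$ at a singular time. This is really the direct logical payoff of the previous corollary, so the proof should be short.

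First I would dispose of the trivial case: if $\limsup_{t\to 0^-} P(t)|t|=+\infty$, then \eqref{eqn:b24_7} is automatic. So I may assume $C:=\limsup_{t\to 0^-} P(t)|t|<\infty$ and suppose for contradiction that $C<\epsilon_0/\log 2$. Then $C\log 2/\epsilon_0<1$, so I can pick an exponent $\lambda$ satisfying
\begin{align*}
\frac{C\log 2}{\epsilon_0}<\lambda<1.
\end{align*}

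Corollary~\ref{cly:b24_1} then applies with this $C$ and $\lambda$ and yields $Q(t)=o(|t|^{-\lambda})$, i.e.\ $|t|^{\lambda}Q(t)\to 0$ as $t\to 0^-$. Factoring
\begin{align*}
|t|\,Q(t)=|t|^{1-\lambda}\cdot\bigl(|t|^{\lambda}Q(t)\bigr)
\end{align*}
and using $1-\lambda>0$ together with $|t|\to 0$, I obtain $\lim_{t\to 0^-}|t|\,Q(t)=0$. This contradicts the classical maximum-principle gap \eqref{eqnin:rmgap} (Lemma 8.7 of \cite{CLN}), which asserts $\lim_{t\to 0^-}|t|\,Q(t)\geq 1/8$ at any singular time. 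Hence $C\geq\epsilon_0/\log 2$, proving \eqref{eqn:b24_7}.

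There is no real obstacle in executing this argument; all the genuine work has been done in Theorem~\ref{thm:b23_1} and Corollary~\ref{cly:b24_1}. The only content here is the threshold calculation: the bound $C<\epsilon_0/\log 2$ is precisely what forces the exponent $\lambda$ supplied by Corollary~\ref{cly:b24_1} to lie below $1$, which is precisely what is needed to contradict the $|t|^{-1}$ blowup rate of $Q(t)$. In this sense, $\epsilon_0/\log 2$ is the sharpest lower bound the present machinery extracts via this contradiction scheme.
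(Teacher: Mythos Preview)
Your proof is correct and follows essentially the same route as the paper: both combine the decay estimate of Corollary~\ref{cly:b24_1} with the type-I lower bound $Q(t)|t|\geq \tfrac{1}{8}$ to force $C\geq \epsilon_0/\log 2$. The only cosmetic difference is that you phrase it as a contradiction with a chosen $\lambda<1$, while the paper works directly with $\lambda=\tfrac{(C+\delta)\log 2}{\epsilon_0}$ and lets $\delta\to 0$.
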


\begin{proof}
  Suppose $\displaystyle \limsup_{t \to 0} P(t)|t|=C \geq 0$.
  By Corollary~\ref{cly:b24_1} and the fact $\displaystyle Q(t)|t| \geq \frac{1}{8}$,
  we have
  \begin{align*}
    0= \limsup_{t \to 0} Q(t)|t|^{\frac{(C+\delta)\log 2}{\epsilon_0}} \geq \limsup_{t \to 0} \frac{1}{8} |t|^{-1+\frac{(C+\delta)\log 2}{\epsilon_0}},
  \end{align*}
  for every $\delta>0$. It follows that
  \begin{align*}
    -1+\frac{C\log 2}{\epsilon_0} \geq 0, \quad \Rightarrow \quad C \geq \frac{\epsilon_0}{\log 2}.
  \end{align*}
\end{proof}

 \subsection{Estimate Ricci curvature  by Riemannian curvature and scalar curvature.}

 \begin{figure}[h]
 \begin{center}
  \psfrag{t-axis}[c][c]{$t$}
  \psfrag{t-1}[l][l]{$t=-1$}
  \psfrag{t0}[c][c]{$t=0$}
  \psfrag{t1}[c][c]{$t=t_1$}
  \psfrag{X}[l][l]{$X$}
  \psfrag{x0}[c][c]{$x_0$}
  \psfrag{B1}[c][c]{$B_{g(0)}(x_0,\frac12) \times [-\frac12,0]$}
  \psfrag{B2}[c][c]{$B_{g(0)}(x_0,10) \times [-1,0]$}
  \psfrag{B3}[c][c]{$\{(x,t)|d_{g(t)}(x,x_0)<100, -1 \leq t \leq 0\}$}
  \psfrag{Rminj}[c][c]{$|Rm|\leq \frac{1}{m^2}, \;inj(x_0) \geq 3.$}
  \includegraphics[width=0.8\columnwidth]{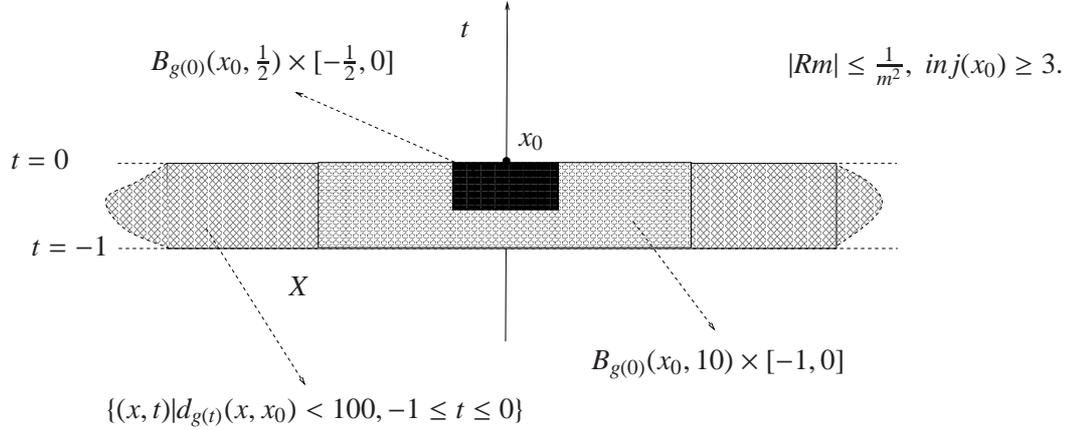}
  \caption{Estimate $|Ric|$ by $|Rm|$ and $|R|$}
  \label{fig:RicRmR}
 \end{center}
\end{figure}

 \begin{theorem}
  Suppose $\left\{ (X, g(t)), -1 \leq t \leq 0 \right\}$ is a Ricci flow solution satisfying  the following properties.
  \begin{itemize}
    \item $X$ is a complete manifold of dimension $m$.
    \item $|Rm|_{g(t)}(x) \leq \frac{1}{m^2}$ whenever $x \in B_{g(t)}(x_0, 100), \; t \in [-1, 0]$.
    \item $(B_{g(0)}(x_0,100), g(t))$ has a uniform Sobolev constant $\sigma$ for every $-1 \leq t \leq 0$.
    \item $inj_{g(t)}(x_0) \geq 3$ uniformly for every $-1 \leq t \leq 0$.
  \end{itemize}
  Then there exists a large constant $C=C(m, \sigma)$ such that
  \begin{align}
    \sup_{B_{g(0)}(x_0, \frac{1}{2}) \times [-\frac{1}{2}, 0]} |Ric|
    \leq C \norm{R}{L^{\infty}(B_{g(0)}(x_0, 10) \times [-1, 0])}^{\frac{1}{2}}.
    \label{eqn:ricrhalf}
  \end{align}
  \label{thm:lcRm}
\end{theorem}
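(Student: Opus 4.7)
The plan is to combine a parabolic Moser iteration applied to $|Ric|^2$ with an integrated form of the scalar curvature evolution equation. The curvature hypothesis $|Rm|\leq 1/m^2$ together with the injectivity radius lower bound and the assumed uniform Sobolev constant $\sigma$ supply the bounded geometry and analytic tools needed to run the iteration uniformly in $t$.

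First I would exploit that $f:=|Ric|^2$ is a subsolution of a well-behaved linear parabolic inequality. From (\ref{eqn:curvinequa}) and the curvature bound,
\begin{align*}
 \left(\D{}{t}-\Delta\right) f \;\leq\; 4|Rm|\,f \;\leq\; \frac{4}{m^2}\,f,
\end{align*}
so $e^{-4t/m^2}f$ is a subsolution of the heat equation. Using the uniform Sobolev constant $\sigma$, together with the Bishop volume comparison (valid since $|Rm|$ is bounded on $B_{g(t)}(x_0,100)$) and the injectivity radius lower bound that make the balls $B_{g(t)}(x_0,r)$ for $t\in[-1,0]$ mutually comparable to $B_{g(0)}(x_0,r)$, a standard parabolic Moser iteration on the cylinder $B_{g(0)}(x_0,10)\times[-1,0]$ yields
\begin{align*}
 \sup_{B_{g(0)}(x_0,1/2)\times[-1/2,0]} |Ric|^2
 \;\leq\; C(m,\sigma)\,\int_{-1}^{0}\!\!\int_{B_{g(0)}(x_0,10)} |Ric|^2\,\eta^2\,dV_{g(t)}\,dt,
\end{align*}
for a suitable smooth spacetime cutoff $\eta$, equal to $1$ on the smaller cylinder, supported in the larger one, and with $\eta(\cdot,-1)\equiv 0$.

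The heart of the argument is then to estimate this spacetime integral by $\sup|R|$. I would multiply the equation $\left(\D{}{t}-\Delta\right)R=2|Ric|^2$ by $\eta^2$ and integrate over $B_{g(0)}(x_0,10)\times[-1,0]$. Spatial integration by parts (twice) moves $\Delta$ onto $\eta^2$, with no boundary term because $\eta$ has compact spatial support. Time integration by parts uses $\partial_t\bigl(dV_{g(t)}\bigr)=-R\,dV_{g(t)}$ together with $\eta(\cdot,-1)\equiv 0$. This produces the identity
\begin{align*}
 2\!\iint |Ric|^2\eta^2\,dV\,dt
 &= \int_X R\,\eta^2(\cdot,0)\,dV_{g(0)} + \iint R^2\eta^2\,dV\,dt \\
 &\quad - \iint R\,\partial_t(\eta^2)\,dV\,dt \;-\; \iint R\,\Delta(\eta^2)\,dV\,dt.
\end{align*}
Since $|Rm|\leq 1/m^2$ on $B_{g(t)}(x_0,100)$ forces $|R|\leq C(m)$, we have $\|R\|_{L^\infty}^2\leq C(m)\|R\|_{L^\infty}$, and each of the four terms on the right is linear in $\|R\|_{L^\infty(B_{g(0)}(x_0,10)\times[-1,0])}$ once we bound the volume of the support of $\eta$ and the sup norms of $\partial_t(\eta^2)$ and $\Delta(\eta^2)$ (both controlled by the uniform curvature bound and the choice of cutoff). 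Thus
\begin{align*}
 \iint |Ric|^2 \eta^2\,dV\,dt \;\leq\; C(m,\sigma)\,\|R\|_{L^\infty(B_{g(0)}(x_0,10)\times[-1,0])}.
\end{align*}
Feeding this into the Moser estimate above and extracting a square root yields exactly (\ref{eqn:ricrhalf}).

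The main technical obstacle is executing Step 2 (Moser iteration) cleanly on an evolving metric. The iteration is naturally formulated with respect to a fixed background, so one must argue that $B_{g(t)}(x_0,r)$ stays comparable to $B_{g(0)}(x_0,r)$ for all $t\in[-1,0]$ and all relevant radii $r\leq 10$; this follows from the a priori bound $|Rm|\leq 1/m^2$ (which already implies $|Ric|\leq C(m)$) together with $inj_{g(t)}(x_0)\geq 3$, which identifies each such ball with a standard geodesic ball of controlled geometry. Combined with the uniform Sobolev constant $\sigma$, this makes the iteration quantitative with constants depending only on $m$ and $\sigma$.
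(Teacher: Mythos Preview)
Your two-step plan---parabolic Moser iteration to bound $\sup|Ric|$ by a spacetime $L^2$ norm, then integrating the scalar curvature evolution against a cutoff to bound $\iint|Ric|^2$ by $\sup|R|$---is exactly the paper's strategy. The paper iterates on $v=|Ric|$ rather than on $|Ric|^2$, but this is cosmetic and yields the same intermediate inequality $\sup_{D'}|Ric|\leq C(m,\sigma)\bigl(\iint_D|Ric|^2\bigr)^{1/2}$.

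The one place your sketch is looser than the paper is the control of $\Delta_{g(t)}(\eta^2)$ in Step~2. You reuse the spatial cutoff built from $d_{g(0)}$, and assert that its $g(t)$-Laplacian is bounded ``by the uniform curvature bound and the choice of cutoff.'' That is not immediate: the $g(t)$-Laplacian of a $g(0)$-function involves $\Gamma(t)-\Gamma(0)$, which in turn requires a bound on $\nabla Ric$ (e.g.\ via Shi's local derivative estimates), and you also need the support of $\nabla\eta$ to lie inside the injectivity radius to avoid cut-locus issues with $d_{g(0)}$. The paper sidesteps all of this by switching to a \emph{different} cutoff $\tilde\eta(y,t)=\psi(d_{g(t)}(y,x_0)-2)$ in Step~2, built from the time-$t$ distance. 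Then $\partial_t\tilde\eta$ and $\Delta_{g(t)}\tilde\eta$ are controlled directly by Hessian/Laplacian comparison for $d_{g(t)}$ under the curvature bound, with the hypothesis $inj_{g(t)}(x_0)\geq 3$ invoked precisely so that $d_{g(t)}$ is smooth on the annulus $2\leq d_{g(t)}\leq 3$ where $\psi'\neq 0$. This is the point in the proof where the injectivity radius assumption does real work; your sketch obscures it.
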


   \begin{proof}
   Let $v=|Ric|$. By the evolution equation of Ricci curvature along the Ricci flow and the fact $|Rm| \leq \frac{1}{m^2}<<1$,
   we have the inequality
   \begin{align}
     -\Delta v + \D{v}{t} \leq v. \label{eqn:v}
   \end{align}
   Let $\eta$ be a cutoff function which vanishes outside $D=\Omega \times [-1, 0]$
   and equals $1$ in $D'=\Omega' \times [-\frac{1}{2}, 0]$,
   where $\Omega=B_{g(0)}(x_0, 1), \; \Omega'=B_{g(0)}(x_0, \frac{1}{2})$.
   Fix $s\in[-1,0]$. Multiply inequality (\ref{eqn:v}) by $\eta^2 v^{\beta-1}$,
   then integrate the resulting inequality in $\Omega \times [-1, s]$,  we obtain
   \begin{align*}
     &\int_{-1}^s \int_{\Omega} (-\Delta v) \eta^2
     v^{\beta-1} d\mu dt + \int_{-1}^s \int_{\Omega}  \D{v}{t} \eta^2 v^{\beta-1} d\mu dt
     \leq \int_{-1}^s \int_{\Omega} \eta^2 v^{\beta} d\mu dt.
   \end{align*}
   Integration by parts yields
   \begin{align*}
    &\quad  (\beta -1) \int_{-1}^s \int_{\Omega}
      \eta^2 v^{\beta-2}|\nabla v|^2 d\mu dt
      +\int_{-1}^s \int_{\Omega} 2\eta \langle \nabla \eta, \nabla v\rangle v^{\beta-1} d\mu dt
      \\
    &\qquad + \frac{1}{\beta} \left\{ \left. \int_{\Omega} \eta^2 v^{\beta} d\mu \right|_s
     - \int_{-1}^s \int_{\Omega} 2\eta \eta' v^{\beta}d\mu dt
     + \int_{-1}^s \int_{\Omega} \eta^2 v^{\beta} R d\mu dt \right\}\\
    &\leq \int_{-1}^s \int_{\Omega} \eta^2 v^{\beta} d\mu dt.
   \end{align*}
   Note that
   $|\nabla v^{\frac{\beta}{2}}|^2 = \frac{\beta^2}{4}v^{\beta-2}|\nabla v|^2$, $|R|\leq \frac{m(m-1)}{m^2}<1$, we have
   \begin{align*}
     &\qquad 4(1-\frac{1}{\beta}) \int_{-1}^s
     \int_{\Omega} \eta^2 |\nabla v^{\frac{\beta}{2}}|^2 d\mu dt + \left. \int_{\Omega} \eta^2 v^{\beta} d\mu \right|_s \\
     &\leq \beta \int_{-1}^s \int_{\Omega} \eta^2 v^{\beta} d\mu dt
     +\int_{-1}^s \int_{\Omega} 2\eta \eta' v^{\beta}d\mu dt
     -\int_{-1}^s \int_{\Omega} \eta^2 v^{\beta} R d\mu dt \\
     &\qquad \qquad -2\beta \int_{-1}^s \int_{\Omega} \eta \langle \nabla \eta, \nabla v\rangle v^{\beta-1} d\mu dt \\
      &\leq 2\beta \int_{-1}^s \int_{\Omega} \eta^2 v^{\beta} d\mu dt
     +\int_{-1}^s \int_{\Omega} 2\eta \eta' v^{\beta}d\mu dt\\
     &\qquad \qquad + \beta \epsilon^2 \int_{-1}^s \int_{\Omega} \eta^2 v^{\beta-2}|\nabla v|^2d\mu dt
     + \beta \epsilon^{-2} \int_{-1}^s \int_{\Omega} v^{\beta} |\nabla \eta|^2 d\mu dt\\
     &=2\beta \int_{-1}^s \int_{\Omega} \eta^2 v^{\beta} d\mu dt
     +\int_{-1}^s \int_{\Omega} 2\eta \eta' v^{\beta}d\mu dt\\
     &\qquad \qquad + \frac{4}{\beta} \epsilon^2 \int_{-1}^s \int_{\Omega} \eta^2 |\nabla v^{\frac{\beta}{2}}|^2 d\mu dt
     + \beta \epsilon^{-2} \int_{-1}^s \int_{\Omega} v^{\beta} |\nabla \eta|^2 d\mu dt.
   \end{align*}
   Choose $\epsilon=\sqrt{\frac{\beta-1}{2}}$.  It follows from the previous inequality that
   \begin{align*}
     &\qquad 2(1-\frac{1}{\beta}) \int_{-1}^s
      \int_{\Omega} \eta^2 |\nabla v^{\frac{\beta}{2}}|^2 d\mu dt + \left. \int_{\Omega} \eta^2 v^{\beta} d\mu \right|_s\\
     &\leq 2\beta \int_{-1}^s \int_{\Omega} \eta^2 v^{\beta} d\mu dt
     +\int_{-1}^s \int_{\Omega} 2\eta \eta' v^{\beta}d\mu dt
     +\frac{2\beta}{\beta-1} \int_{-1}^s \int_{\Omega} v^{\beta} |\nabla \eta|^2 d\mu dt.
    \end{align*}
    Since $|\nabla (\eta v^{\frac{\beta}{2}})| \leq 2\eta^2 |\nabla v^{\frac{\beta}{2}}|^2 + 2v^{\beta} |\nabla \eta|^2$,
    we have
   \begin{align*}
     &\qquad (1-\frac{1}{\beta}) \int_{-1}^s
      \int_{\Omega} |\nabla (\eta v^{\frac{\beta}{2}})|^2 d\mu dt + \left. \int_{\Omega} \eta^2 v^{\beta} d\mu \right|_s\\
     &\leq 2\beta \int_{-1}^s \int_{\Omega} \eta^2 v^{\beta} d\mu dt
     +\int_{-1}^s \int_{\Omega} 2\eta \eta' v^{\beta}d\mu dt
     +2(\frac{\beta}{\beta-1} +\frac{\beta-1}{\beta}) \int_{-1}^s \int_{\Omega} v^{\beta} |\nabla \eta|^2 d\mu dt.
    \end{align*}
    Fix $\beta \geq 2$. We have
    \begin{align*}
     &\qquad \int_{-1}^s \int_{\Omega} |\nabla (\eta v^{\frac{\beta}{2}})|^2 d\mu dt + \left. 2\int_{\Omega} \eta^2 v^{\beta} d\mu \right|_s\\
     &\leq  4\beta \int_{-1}^s \int_{\Omega} \eta^2 v^{\beta} d\mu dt
     +4\int_{-1}^s \int_{\Omega} \eta \eta' v^{\beta}d\mu dt
     +12 \int_{-1}^s \int_{\Omega} v^{\beta} |\nabla \eta|^2 d\mu dt\\
     &\leq 6 \beta \left\{ \int_{-1}^s \int_{\Omega} \eta^2 v^{\beta} d\mu dt + \int_{-1}^s \int_{\Omega} (\eta \eta' +|\nabla \eta|^2) v^{\beta}d\mu dt
         \right\}.
    \end{align*}
    In particular, the following two inequalities hold.
    \begin{align*}
     &\int_{-1}^s \int_{\Omega} |\nabla (\eta v^{\frac{\beta}{2}})|^2 d\mu dt
        \leq 6 \beta \left\{ \int_{-1}^0 \int_{\Omega} \eta^2 v^{\beta} d\mu dt + \int_{-1}^0 \int_{\Omega} (\eta \eta' +|\nabla \eta|^2) v^{\beta}d\mu dt
         \right\},\\
     &\max_{0 \leq s \leq 1} \left. \int_{\Omega} \eta^2 v^{\beta} d\mu \right|_s
       \leq 6 \beta \left\{ \int_{-1}^0 \int_{\Omega} \eta^2 v^{\beta} d\mu dt + \int_{-1}^0 \int_{\Omega} (\eta \eta' +|\nabla \eta|^2) v^{\beta}d\mu dt
         \right\}.	
    \end{align*}
   The parabolic Sobolev inequality implies
\begin{align}
  \iint_D (\eta v^{\frac{\beta}{2}})^{\frac{2(m+2)}{m}} d\mu dt
   &\leq \sigma \left(\max_{0 \leq s \leq 1}
   \left. \int_{\Omega} \eta^2 v^{\beta} d\mu \right|_s \right)^{\frac{2}{m}}
   \left( \iint_D |\nabla (\eta v^{\frac{\beta}{2}})|^2 d\mu dt \right) \nonumber\\
   &\leq \sigma \left\{ 6\beta \iint_D (\eta^2 + \eta \eta' + |\nabla \eta|^2) v^{\beta} d\mu dt \right\}^{\frac{m+2}{m}}.
   \label{eqn:baseforit}
\end{align}

Now we consider the choice of cutoff function $\eta$.
For every $k \in \Z^+ \cup \left\{ 0 \right\}$, we define
\begin{align}
 t_k \triangleq -\frac12 - \frac{1}{2^{k+1}}, \quad
 &r_k \triangleq \frac12 + \frac{1}{2^{k+1}}, \\
 \Omega_k \triangleq B_{g(0)}(x_0,r_k), \quad
 &D_k \triangleq \Omega_k\times [t_k,1].
 \label{eqn:domains}
\end{align}
Clearly, $D' \subset \cdots D_k \subset D_{k-1} \subset \cdots D_1 \subset D_0=D$.
Let $\phi$ be a smooth cutoff function with the following properties.
\begin{align*}
    \phi(t)=
    \begin{cases}
      0, & \mbox{if } t \leq 0, \\
      1, & \mbox{if } t \geq 1.
    \end{cases}
    \quad
    0 \leq \phi' \leq 2.
\end{align*}
Let $\psi=1-\phi$.   Define
\begin{align*}
  \phi_k(t) \triangleq \phi(\frac{t-t_{k-1}}{t_k-t_{k-1}}), \quad
  \psi_k(s) \triangleq \psi(\frac{s-r_k}{r_{k-1}-r_k})
\end{align*}
for every $k \in \Z^+$. Define cutoff functions $\eta_k$ by
\begin{align*}
    \eta_k(x,t) = \phi_k(t) \psi_k(d_{g(0)}(x,x_0)).
\end{align*}
By this definition, it is clear that $\eta_k \equiv 1$ on $D_k$ and $\eta_k \equiv 0$ outside $D_{k-1}$.
Moreover, using the fact that $|Ric| \leq \frac{m-1}{m^2}$ on $X \times [-1, 0]$, we deduce
\begin{align}
  |\nabla \eta_k| \leq 2^{k+3},    \quad |\D{\eta_k}{t}| \leq 2^{k+2}.
  \label{eqn:etabd}
\end{align}

\begin{figure}[h]
 \begin{center}
  \psfrag{A}[c][c]{$y=\phi(t)$}
  \psfrag{B}[l][l]{$y=\psi(s)$}
  \psfrag{y}[c][c]{$y$}
  \psfrag{t}[c][c]{$t$}
  \psfrag{s}[c][c]{$s$}
  \psfrag{O}[l][l]{$(0,0)$}
  \psfrag{1}[c][c]{$1$}
  \includegraphics[width=0.8\columnwidth]{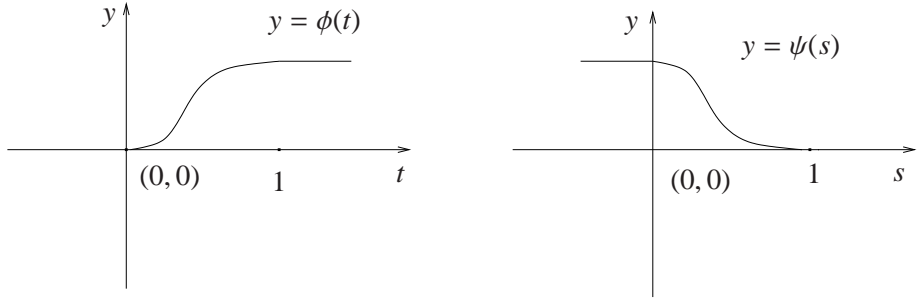}
  \caption{basic cutoff functions}
  \label{fig:basic}
 \end{center}
\end{figure}

Let $\lambda=\frac{m+2}{m}$. Inequality (\ref{eqn:baseforit}) and inequality (\ref{eqn:etabd}) imply
   \begin{align*}
     \norm{v^{\beta}}{L^{\lambda}(D_k)}& =  \norm{\eta_k^2 v^{\beta}}{L^{\lambda}(D_k)}\\
     &\leq  \norm{\eta_k^2 v^{\beta}}{L^{\lambda}(D_{k-1})} \\
     &\leq C_1 \sigma^{\frac{1}{\lambda}}  \cdot (6\beta) \cdot \iint_{D_{k-1}} (\eta^2 +\eta \eta' + |\nabla \eta|^2) v^{\beta} d\mu dt\\
     &\leq C_2 \sigma^{\frac{1}{\lambda}} \beta 4^k \iint_{D_{k-1}} v^{\beta} d\mu dt.
   \end{align*}
It follows that
   \begin{align*}
     \norm{v}{L^{\lambda \beta}(D_k)} \leq (C_2\sigma^{\frac{1}{\lambda}})^{\frac{1}{\beta}}
     \cdot \beta^{\frac{1}{\beta}} \cdot 4^{\frac{k}{\beta}} \cdot \norm{v}{L^{\beta}(D_{k-1})}.
   \end{align*}
Let $\beta=2 \lambda^{k-1}$ for $k \geq 1$, we have
   \begin{align*}
     \norm{v}{L^{2\lambda^k}(D_k)} &\leq (C_2\sigma^{\frac{1}{\lambda}})^{\frac{1}{2} \lambda^{1-k}}
     \cdot 2^{\frac{1}{2} \lambda^{1-k}} \cdot  \lambda^{\frac{k-1}{2} \lambda^{1-k}}
     \cdot 4^{\frac{k}{2} \lambda^{1-k}} \norm{v}{L^{2\lambda^{k-1}}(D_{k-1})}.
   \end{align*}
Iteration of this inequality yields
   \begin{align*}
     \norm{v}{L^{\infty}(D')}   &\leq (C_2\sigma^{\frac{1}{\lambda}})^{\sum_{k=1}^{\infty} \frac{1}{2} \lambda^{1-k}} \cdot
      2^{\sum_{k=1}^{\infty} \frac{1}{2} \lambda^{1-k}} \cdot \lambda^{\sum_{k=1}^{\infty} \frac{k-1}{2} \lambda^{1-k}}
      \cdot 4^{\sum_{k=1}^{\infty} \frac{k}{2} \lambda^{1-k}}\norm{v}{L^2(D)}\\
      &=C(m, \sigma) \norm{v}{L^2(D)}.
   \end{align*}
Recall that $v=|Ric|$, we have obtained
\begin{align}
  \sup_{D'} |Ric| \leq C(m, \sigma) \left \{\iint_D |Ric|^2 d\mu dt \right\}^{\frac{1}{2}}.
  \label{eqn:ricbdl2}
\end{align}

Choose cutoff function $\tilde{\eta}(y,t)=\psi(d_{g(t)}(y, x_0)-2)$.  It is easy to check
$\Omega=B_{g(0)}(x_0, 1) \subset B_{g(t)}(x_0, 2)$ for every $-1 \leq t \leq 0$. Therefore $\tilde{\eta} \equiv 1$
on $D=\Omega \times [-1, 0]$. From the evolution equation of scalar curvature
$\D{}{t} R = \Delta R + 2|Ric|^2$, we have
\begin{align}
  2\iint_D |Ric|^2 d\mu dt & \leq \int_{-1}^0 \int_X 2|Ric|^2 \tilde{\eta} d\mu dt \nonumber \\
  &=\int_{-1}^0 \int_X (\D{R}{t} - \Delta R) \tilde{\eta} d\mu dt \nonumber\\
  &=\int_{-1}^0 \D{}{t} \left\{ \int_X \tilde{\eta} R d\mu \right\} dt
  +\int_{-1}^0 \int_X (-\D{\tilde{\eta}}{t}-\Delta \tilde{\eta} +\tilde{\eta}R)R d\mu dt \nonumber\\
  &=\left. \left\{ \int_{B_{g(t)(x_0, 3)}} \tilde{\eta} R d\mu \right\} \right|_{-1}^0
  +\int_{-1}^0 \int_{B_{g(t)}(x_0, 3)} (-\D{\tilde{\eta}}{t}-\Delta \tilde{\eta} +\tilde{\eta}R)R d\mu dt.
  \label{eqn:ricr}
\end{align}
In order to estimate the last term, we calculate
\begin{align*}
  \left|-\D{}{t} \tilde{\eta} -\Delta \tilde{\eta} + R \tilde{\eta} \right|
  =\left|-\psi' \D{d}{t} - (\psi''|\nabla d|^2 +\psi' \Delta d) + R \tilde{\eta} \right|
  \leq
  \begin{cases}
   &C(m) \left\{ \left|\D{d}{t} \right| + \left|\Delta d \right| + 1 \right\}, \quad \textrm{if} \; 2 \leq d \leq 3.\\
   &|R| \leq \frac{m(m-1)}{m^2}, \quad \textrm{if} \; d \leq 2.
  \end{cases}
\end{align*}
Here $d=d_{g(t)}(\cdot , x_0)$. Since $inj_{g(t)}(x_0) \geq 3$ and $|Rm|_{g(t)}(x) \leq \frac{1}{m^2}$
whenever $x \in B_{g(t)}(x_0, 100)$, $t \in [-1, 0]$, Hessian comparison theorem implies
\begin{align*}
   \left|-\D{}{t} \tilde{\eta} -\Delta \tilde{\eta} + R \tilde{\eta} \right| \leq C(m).
\end{align*}
Plug this into equation (\ref{eqn:ricr}) yields
\begin{align}
  \iint_D |Ric|^2 d\mu dt &\leq
  \frac12 \left\{ \left|\int_{B_{g(0)}(x_0,3)} \tilde{\eta}R d\mu \right|
  +\left|\int_{B_{g(-1)(x_0,3)}} \tilde{\eta}R d\mu \right|
  +C(m) \int_{-1}^0 \int_{B_{g(t)}(x_0, 3)} |R| d\mu dt\right\} \nonumber\\
 &\leq   \frac12 \left\{ \int_{B_{g(0)}(x_0,10)} |R|_{g(0)} d\mu_{g(0)}
   + \int_{B_{g(0)(x_0,10)}} |R|_{g(-1)} d\mu_{g(-1)}
  +C(m) \int_{-1}^0 \int_{B_{g(0)}(x_0, 10)} |R| d\mu dt \right\} \nonumber\\
  &\leq C(m) \sup_{B_{g(0)}(x_0, 10) \times [-1,0]} |R|,
  \label{eqn:ricrg}
\end{align}
where we used the fact that $B_{g(t)}(x_0,3) \subset B_{g(0)}(x_0, 10)$ for every $-1 \leq t \leq 0$.
Combine equation (\ref{eqn:ricrg}) with equation (\ref{eqn:ricbdl2}), we obtain inequality (\ref{eqn:ricrhalf}).
\end{proof}

\begin{corollary}
 There is a constant $A_0=A_0(m, \kappa)$ such that the following property holds.

 If $\left\{ (X, g(t)), -\frac{1}{8} \leq t \leq 0 \right\} \in \mathcal{M}(m, \kappa, [-\frac{1}{8}, 0])$, then
 \begin{align}
   P(0) \leq A_0 \sqrt{\sup_{t \in [-\frac{1}{8}, 0]} O(t)}.
   \label{eqn:a12_1}
 \end{align}
  \label{cly:c3_1}
\end{corollary}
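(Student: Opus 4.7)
The plan is to reduce the statement to Theorem~\ref{thm:lcRm} via parabolic rescaling. Since the flow lies in $\mathcal{M}(m,\kappa,[-\tfrac18,0])$, we have $|Rm|_g\leq 2$ on $X\times[-\tfrac18,0]$ and the flow is $\kappa$-non-collapsed throughout. Fix any $x_0\in X$ and a rescaling constant $\Lambda=\Lambda(m,\kappa)\geq 1$ to be specified. Setting $\tilde g(\tilde t):=\Lambda\,g(\tilde t/\Lambda)$ for $\tilde t\in[-\Lambda/8,0]$ produces a Ricci flow with $|Rm|_{\tilde g}=\Lambda^{-1}|Rm|_g$, $|Ric|_{\tilde g}=\Lambda^{-1}|Ric|_g$, $|R|_{\tilde g}=\Lambda^{-1}|R|_g$ and $d_{\tilde g}=\sqrt\Lambda\,d_g$.

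The goal now is to choose $\Lambda$ large enough that $\tilde g$ verifies every hypothesis of Theorem~\ref{thm:lcRm} on the time interval $[-1,0]$. Requiring $\Lambda\geq \max(2m^2,8)$ takes care simultaneously of the curvature bound $|Rm|_{\tilde g}\leq 1/m^2$ on all of $X$ and of the interval inclusion $[-1,0]\subset[-\Lambda/8,0]$. Since $\kappa$-non-collapsing is scale invariant, $\tilde g$ remains $\kappa$-non-collapsed; combined with $|Rm|_{\tilde g}\leq 1/m^2$, the classical Cheeger--Gromov--Taylor estimate yields a uniform lower bound $inj_{g(t)}(x_0)\geq c(m,\kappa)>0$ for the original metric (equivalently $inj_{\tilde g(\tilde t)}(x_0)\geq\sqrt\Lambda\,c(m,\kappa)$ for the rescaled one). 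Enlarging $\Lambda$ so that $\sqrt\Lambda\,c(m,\kappa)\geq 3$ secures the injectivity hypothesis. A uniform Sobolev constant $\sigma=\sigma(m,\kappa)$ on $B_{\tilde g(\tilde t)}(x_0,100)$ then follows from volume comparison together with the standard isoperimetric/Sobolev inequality for domains with bounded curvature and a controlled volume ratio.

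With all hypotheses in place, Theorem~\ref{thm:lcRm} applied to $\tilde g$ at $x_0$ gives
\begin{align*}
|Ric|_{\tilde g(0)}(x_0)\;\leq\; C(m,\sigma)\sqrt{\,\sup_{B_{\tilde g(0)}(x_0,10)\times[-1,0]}|R|_{\tilde g}\,}.
\end{align*}
Unwinding the rescaling via $|Ric|_{\tilde g(0)}=\Lambda^{-1}|Ric|_{g(0)}$ and $|R|_{\tilde g(\tilde t)}=\Lambda^{-1}|R|_{g(\tilde t/\Lambda)}$, and noting that $\tilde t\in[-1,0]$ corresponds to $t=\tilde t/\Lambda\in[-1/\Lambda,0]\subset[-\tfrac18,0]$, I obtain
\begin{align*}
|Ric|_{g(0)}(x_0)\;\leq\; C(m,\sigma)\,\sqrt{\Lambda}\,\sqrt{\,\sup_{t\in[-\tfrac18,0]}O(t)\,}.
\end{align*}
Taking the supremum over $x_0\in X$ yields the desired inequality $P(0)\leq A_0\sqrt{\sup_{t\in[-\tfrac18,0]}O(t)}$ with $A_0:=C(m,\sigma)\sqrt\Lambda=A_0(m,\kappa)$.

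The main technical obstacle in this plan is the middle step: extracting from $\kappa$-non-collapsing and the uniform bound $|Rm|_{\tilde g}\leq 1/m^2$ both a lower bound on the injectivity radius at $x_0$ and a uniform Sobolev constant on $B_{\tilde g(\tilde t)}(x_0,100)$, uniformly in $\tilde t\in[-1,0]$. These are classical consequences of Cheeger--Gromov--Taylor and volume-comparison arguments, but they must be invoked at a scale that is controlled purely by $m$ and $\kappa$ so that the final constant $A_0$ indeed depends on $(m,\kappa)$ only.
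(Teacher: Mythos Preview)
Your proof is correct and follows essentially the same approach as the paper: rescale by a factor depending only on $(m,\kappa)$ so that the flow satisfies the hypotheses of Theorem~\ref{thm:lcRm}, apply that theorem at an arbitrary base point, and unwind the scaling. The paper's argument is slightly terser---it simply asserts the existence of uniform constants $\sigma_0$ and $c_0$ for the Sobolev constant and injectivity radius (exactly the ``main technical obstacle'' you flag), then sets $A=\max\{3/c_0,\,100m\}$ and rescales by $A^2$ (your $\Lambda$), after which the computation is identical to yours.
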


\begin{proof}
  By the uniform curvature bound and non-collapsed condition of all flows in $\mathcal{M}(m, \kappa, [-\frac{1}{8}, 0])$, we see that there exist constants
  $\sigma_0$ and $c_0$ such that
  \begin{itemize}
    \item $( B_{g(t)}(x, 1), g(t))$ has a uniform Sobolev constant $\sigma_0$ for every $(x, t)
     \in X \times [-\frac{1}{8}, 0]$.
    \item $inj_{g(t)}(x) \geq c_0$ for every $(x, t) \in X \times [-\frac{1}{8}, 0]$.
  \end{itemize}
  Let $A=\max\left\{ \frac{3}{c_0}, 100m \right\}$, $h(t)=A^{2} g(A^{-2} t)$.   Based at every point $x_0 \in X$,
  the flow  $\left\{ (X^m, h(t)), -1 \leq t \leq 0 \right\}$ satisfies all the requirements in Theorem~\ref{thm:lcRm}.  Therefore, we have
  \begin{align*}
    |Ric|_{h(0)}(x_0) &\leq \sup_{B_{h(0)}(x_0, \frac{1}{2}) \times [-\frac{1}{2}, 0]} |Ric|_{h}\\
      &\leq C \norm{R}{L^{\infty}(B_{h(0)}(x_0, 10) \times [-1, 0])}^{\frac{1}{2}}\\
      &\leq C\left( \sup_{t \in [-1, 0]} O_h(t) \right)^{\frac{1}{2}}.
  \end{align*}
  It follows that
  \begin{align*}
   &\qquad P_h(0) \leq C\left( \sup_{t \in [-1, 0]} O_h(t) \right)^{\frac{1}{2}}, \\
   &\Rightarrow  A^{-2}P_g(0) \leq C \left(A^{-2} \sup_{t \in [-A^{-2}, 0]} O_g(t)\right)^{\frac{1}{2}}
   \leq  CA^{-1} \left( \sup_{t \in [-\frac{1}{8}, 0]} O_g(t) \right)^{\frac{1}{2}}, \\
   &\Rightarrow  P_g(0) \leq CA \left( \sup_{t \in [-\frac{1}{8}, 0]} O_g(t) \right)^{\frac{1}{2}}
   =A_0 \left( \sup_{t \in [-\frac{1}{8}, 0]} O_g(t) \right)^{\frac{1}{2}},
  \end{align*}
  where we define $A_0=CA$. Clearly, $A_0=A_0(m, \kappa)$.
\end{proof}


\begin{remark}
 After scaling, inequality (\ref{eqn:a12_1}) can be regarded as an estimate of the type $|Ric| \leq \sqrt{|Rm||R|}$.
 If $|Rm|$ has a big norm compared to $|R|$, then we have an inequality of the type
 $|\overset{\circ}{Ric}| \leq \sqrt{|W||R|}$ where $W$ is the Weyl tensor.
 This estimate is then of the similar spirit of the main estimates in~\cite{Kno} and~\cite{CaX}.
 \label{rmk:c1_2}
\end{remark}

 \begin{corollary}
  Suppose $\left\{ (X, g(t)), -1 \leq t <0 \right\} \in \mathcal{L}(m, \kappa, [-1, 0))$,  $t=0$ is the singular time.
  If $\displaystyle \limsup_{t \to 0} \sqrt{O(t)Q(t)} |t|=C$, then we have
  \begin{align}
    Q(t)= o(|t|^{-\lambda}),
  \label{eqn:b23_7}
  \end{align}
  where $\lambda>\frac{1}{\log_2 (1+\frac{\epsilon_0}{\sqrt{2}A_0C})}$, $\epsilon_0$ is the constant
  in Theorem~\ref{thm:b23_1}, $A_0$ is the constant in Corollary~\ref{cly:c3_1}.
  \label{cly:b24_2}
\end{corollary}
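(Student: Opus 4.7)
The plan is to mirror the proof of Corollary~\ref{cly:b24_1}, but replace the direct hypothesis on $P(t)|t|$ by the bound on $P(t)$ coming from Corollary~\ref{cly:c3_1} applied to the hypothesis on $\sqrt{O(t)Q(t)}\,|t|$. Fix $\lambda > \frac{1}{\log_2(1+\frac{\epsilon_0}{\sqrt{2}A_0C})}$ and pick $\delta>0$ small enough that the same strict inequality still holds with $C$ replaced by $C+\delta$. Choose $t_0\in[-1,0)$ close enough to $0$ that $\sqrt{O(t)Q(t)}\,|t|<C+\delta$ on $[t_0,0)$, that $Q(t)\leq Q(t_0)$ for $t\in[-1,t_0]$, and that $Q(t_0)|1+t_0|\geq 1$. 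Define $s_i=\inf\{t\geq t_0 : Q(t)=2^i Q(t_0)\}$, so $s_0=t_0$ and $s_i\nearrow 0$.

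The first step is a lower bound on $\int_{s_i}^{s_{i+1}} P\,dt$ exactly as in Corollary~\ref{cly:b24_1}: the rescaled flow $h_i(t)=Q(s_i)g(Q(s_i)^{-1}t+s_i)$ belongs to $\mathcal{L}(m,\kappa,\cdot)$, has $Q_{h_i}(0)=1$, $Q_{h_i}(t)\leq 1$ on the past, and doubles at time $Q(s_i)(s_{i+1}-s_i)$; Lemma~\ref{lma:b23_1} then gives
\[
\int_{s_i}^{s_{i+1}} P(t)\,dt=\int_0^{Q(s_i)(s_{i+1}-s_i)} P_{h_i}(t)\,dt>\epsilon_0.
\]

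The second step, which is the core new ingredient, is to upper bound the same integral via Corollary~\ref{cly:c3_1}. For $\tau\in[s_i,s_{i+1}]$ I rescale by $M=Q(s_{i+1})=2Q(s_i)$. Using $(\partial_t-\Delta)|Rm|^2\leq 16|Rm|^3$ from (\ref{eqn:curvinequa}) (which gives $s_i-s_{i-1}\geq \frac{1}{16 Q(s_i)}$), the backward parabolic neighborhood $[\tau-\frac{1}{16Q(s_i)},\tau]$ sits inside $[s_{i-1},s_{i+1}]$ where $Q\leq 2Q(s_i)$. After rescaling, the flow falls under the hypothesis of Corollary~\ref{cly:c3_1}, so
\[
P(\tau)\leq A_0\sqrt{2Q(s_i)\,\sup_{s\in[\tau-\frac{1}{16Q(s_i)},\tau]} O(s)}.
\]
Using the hypothesis $Q(s)O(s)s^2<(C+\delta)^2$ together with $|s|\geq|s_{i+1}|$ on this interval, and the one-sided comparison $Q(s)\geq Q(s_i)$ at the supremum point (again from (\ref{eqn:curvinequa})), the product under the square root is bounded by $2(C+\delta)^2/s_{i+1}^2$, yielding
\[
P(\tau)\leq \frac{\sqrt{2}\,A_0(C+\delta)}{|s_{i+1}|}.
\]
Integrating over $[s_i,s_{i+1}]$ and combining with the lower bound gives
\[
\epsilon_0 < \sqrt{2}A_0(C+\delta)\cdot\frac{s_{i+1}-s_i}{|s_{i+1}|}=\sqrt{2}A_0(C+\delta)\left(\frac{|s_i|}{|s_{i+1}|}-1\right),
\]
i.e.\ $\frac{|s_i|}{|s_{i+1}|}>1+\frac{\epsilon_0}{\sqrt{2}A_0(C+\delta)}$.

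Iterating produces geometric decay $|s_i|\leq|t_0|\bigl(1+\epsilon_0/(\sqrt{2}A_0(C+\delta))\bigr)^{-i}$; combined with $Q(s_i)=2^i Q(t_0)$ this yields
\[
Q(s_i)|s_i|^\lambda \leq Q(t_0)|t_0|^\lambda\left(\frac{2}{\bigl(1+\tfrac{\epsilon_0}{\sqrt{2}A_0(C+\delta)}\bigr)^\lambda}\right)^i,
\]
and the choice of $\lambda$ makes this base strictly less than $1$, so $Q(s_i)|s_i|^\lambda\to 0$. Since for $t\in[s_i,s_{i+1}]$ one has $Q(t)|t|^\lambda\leq 2Q(s_i)|s_i|^\lambda$, the conclusion $Q(t)=o(|t|^{-\lambda})$ follows. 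The main obstacle is the $P$-estimate in the second step: balancing the rescaling constant $M$ against the upper bound on $Q$ and against the only indirect control on $O$ through $\sqrt{OQ}|t|$ is what fixes both the factor $\sqrt{2}$ and the precise geometric ratio $1+\epsilon_0/(\sqrt{2}A_0 C)$ that appears in the exponent.
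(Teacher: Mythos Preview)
Your overall architecture matches the paper's: lower-bound $\int P$ via Lemma~\ref{lma:b23_1}, upper-bound it via Corollary~\ref{cly:c3_1} combined with the hypothesis on $\sqrt{OQ}\,|t|$, then iterate to get geometric decay of $|s_i|$. The first step and the final iteration are fine. The gap is in your second step.

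You claim the ``one-sided comparison $Q(s)\geq Q(s_i)$ at the supremum point'' follows from (\ref{eqn:curvinequa}). It does not. The inequality $(\partial_t-\Delta)|Rm|^2\leq 16|Rm|^3$ only yields $\dot Q\leq 8Q^2$, i.e.\ a \emph{backward} lower bound $Q(s)\geq Q(\tau)/(1+8Q(\tau)(\tau-s))$; it gives no lower bound on $Q$ going forward from $s_i$. On $[s_i,s_{i+1}]$ the function $Q$ is only constrained by $Q\leq 2Q(s_i)$ and may well dip below $Q(s_i)$ (and on the portion of the backward window lying in $[s_{i-1},s_i)$ one certainly has $Q<Q(s_i)$ by the very definition of $s_i$). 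Without a lower bound on $Q(s)$ you cannot turn $Q(s)O(s)s^2<(C+\delta)^2$ into the needed bound on $O(s)$, so the asserted estimate $P(\tau)\leq \sqrt{2}A_0(C+\delta)/|s_{i+1}|$ is unproven. This is exactly what fixes the $\sqrt{2}$ and hence the exponent in the statement, so the gap is not cosmetic.

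The paper's remedy is a small but essential change of bookkeeping: instead of working directly on $[s_i,s_{i+1}]$, it introduces $t_1=$ the first time after $t_0$ with $|\log(Q/Q(t_0))|=\log 2$. By definition this guarantees the two-sided bound $\tfrac12 Q(t_0)\leq Q\leq 2Q(t_0)$ on $[t_0,t_1]$ (extended slightly backward using the ODE comparison), and \emph{that} lower bound is what converts the hypothesis into $O_h\leq 2(C+\delta)^2/(Q(t_0)^2|t_1|^2)$, yielding $P_h\leq \sqrt{2}A_0(C+\delta)/(Q(t_0)|t_1|)$. Since $t_1\leq s_1$, the resulting ratio bound $|t_1|/|t_0|\leq \bigl(1+\tfrac{\epsilon_0}{\sqrt{2}A_0(C+\delta)}\bigr)^{-1}$ transfers to $|s_1|/|t_0|$, and then the iteration proceeds exactly as you wrote. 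Replace your direct use of $s_{i+1}$ by this ``first exit from the corridor $[\tfrac12 Q(s_i),2Q(s_i)]$'' and your proof becomes correct with the stated constants.
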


\begin{proof}

  Fix $\lambda>\frac{1}{\log_2 (1+\frac{\epsilon_0}{\sqrt{2}A_0C})}$.  Choose $\delta>0$ such that
  \begin{align}
    \lambda>\frac{1}{\log_2 (1+\frac{\epsilon_0}{\sqrt{2}A_0(C+\delta)})}.
   \label{eqn:b24_8}
  \end{align}
  Since $\displaystyle \limsup_{t \to 0} \sqrt{O(t)Q(t)} |t|=C$ and $\displaystyle \lim_{t \to 0}Q(t)=\infty$,
  we can find a time $t_0<0$ with the following properties
  \begin{align}
    \begin{cases}
    & Q(t)O(t)t^2 < (C+\delta)^2, \quad \forall \; t \in [t_0, 0],\\
    & Q(t) \leq Q(t_0), \quad \forall \; t \in [-1, t_0].
  \end{cases}
  \label{eqn:b23_double}
  \end{align}
  Since $Q(t)$ satisfies the inequality
  \begin{align*}
    \D{}{t} Q^2 \leq 16Q^3,
  \end{align*}
  maximum principle implies that
  \begin{align*}
    Q(t) \geq \frac{1}{2}Q(t_0),  \quad \forall \; t \in [t_0-\frac{1}{8}Q^{-1}(t_0), t_0].
  \end{align*}
  Combine this with second inequality of (\ref{eqn:b23_double}), we have
  \begin{align}
   Q(t_0) \geq  Q(t) \geq \frac{1}{2}Q(t_0),  \quad \forall \; t \in [t_0-\frac{1}{8}Q^{-1}(t_0), t_0].
   \label{eqn:b23_2}
  \end{align}

  Let $t_1$ be the first time such that $\left| \log \frac{Q(t)}{Q(t_0)} \right|$ achieves $\log 2$.
  According to inequality (\ref{eqn:b23_2}) and the definition of $t_1$, we have
  \begin{align}
    Q(t) \geq \frac{1}{2} Q(t_0), \quad \forall \; t \in [t_0-\frac{1}{8}Q^{-1}(t_0), t_1].
    \label{eqn:b23_3}
  \end{align}
  On the other hand, we have
  \begin{align}
    Q(t) \leq 2Q(t_0),  \quad \forall \; t \in [-1, t_1].
    \label{eqn:b23_4}
  \end{align}
  By definition of $t_1$, $Q(t_1)=2Q(t_0)$ or $\frac{1}{2}Q(t_0)$. Clearly, we have
  \begin{align}
    Q(t_1) \geq \frac{1}{2}Q(t_0).     \label{eqn:b23_5}
  \end{align}
  Define $h(t)=Q(t_0)g(Q^{-1}(t_0)t+t_1)$. In view of (\ref{eqn:b23_4}) and (\ref{eqn:b23_5}), we have
  $\left\{ (X^m, h(t)), -1 \leq t \leq 0 \right\} \in \mathcal{M}(m, \kappa)$.
  For more general $t$,  $h(t)$ satisfies the following properties.
  \begin{align*}
   \begin{cases}
   & Q_{h}(t) \leq 2, \quad \forall \; t \in [Q(t_0)(-1-t_1), 0].\\
   & Q_{h}(t) \geq \frac{1}{2}, \quad \forall \; t \in [Q(t_0)(t_0-t_1)-\frac{1}{8}, 0].\\
   & Q_{h}(t)O_{h}(t) \left(Q(t_0)|t_1| +|t| \right)^2 <(C+\delta)^2, \quad \forall \; t \in [Q(t_0)(t_0-t_1)-\frac{1}{8}, 0].
   \end{cases}
  \end{align*}
  It follows that
  \begin{align*}
   \begin{cases}
    & \frac{1}{2} \leq Q_{h}(t) \leq 2, \quad \forall \; t \in [Q(t_0)(t_0-t_1)-\frac{1}{8}, 0].\\
    & O_{h}(t)< \frac{(C+\delta)^2}{(Q(t_0)|t_1|+|t|)^2 Q_{h}(t)} \leq \frac{2(C+\delta)^2}{(Q(t_0)|t_1|+|t|)^2}
    \leq \frac{2(C+\delta)^2}{Q^2(t_0) |t_1|^2}, \quad \forall \; t \in [Q(t_0)(t_0-t_1)-\frac{1}{8}, 0].
   \end{cases}
  \end{align*}
  By Corollary~\ref{cly:c3_1}, the estimate of $\displaystyle P_h(t)=\sup_X |Ric|_{h(t)}$ follows.
  \begin{align}
    P_{h}(t) \leq A_0 \sqrt{ \frac{2(C+\delta)^2}{Q^2(t_0)|t_1|^2}} =\frac{\sqrt{2}A_0(C+\delta)}{Q(t_0)|t_1|},
    \quad \forall  \; t \in [Q(t_0)(t_0-t_1), 0].
  \label{eqn:b24_9}
  \end{align}
  This yields that
  \begin{align}
   & \quad \int_{Q(t_0)(t_0-t_1)}^{0} P_{h}(t) dt \leq \sqrt{2}A_0(C+\delta) \cdot \frac{|t_1-t_0|}{|t_1|}= \sqrt{2}A_0(C+\delta) \left( \frac{|t_0|}{|t_1|}-1 \right), \nonumber\\
   & \Rightarrow
   \frac{|t_0|}{|t_1|} \geq 1+ \frac{1}{\sqrt{2}A_0(C+\delta)} \int_{Q(t_0)(t_0-t_1)}^{0} P_{h}(t) dt.  \label{eqn:b22_1}
  \end{align}
  By Lemma~\ref{lma:b23_1}, the fact $\left|\log \frac{Q(t_1)}{Q(t_0)} \right|=\log 2$ implies that
  \begin{align}
    \int_{Q(t_0)(t_0-t_1)}^{0} P_{h}(t) dt > \epsilon_0. \label{eqn:b22_2}
  \end{align}
  Combine inequality (\ref{eqn:b22_1}) and (\ref{eqn:b22_2}), we obtain
  \begin{align}
    \frac{|t_1|}{|t_0|} \leq \frac{1}{1+\frac{\epsilon_0}{\sqrt{2}A_0(C+\delta)}}. \label{eqn:b22_3}
  \end{align}
  Let $s_1$ be the first time such that $Q(t)$ achieves $2Q(t_0)$. Clearly, we have
  \begin{align}
     t_0 < t_1 \leq s_1 <0, \quad
     \Rightarrow
     \quad
     \frac{|s_1|}{|t_0|} \leq \frac{|t_1|}{|t_0|} \leq \frac{1}{1+\frac{\epsilon_0}{\sqrt{2}A_0(C+\delta)}}.
  \label{eqn:b24_10}
  \end{align}
  For each integer $i \geq 0$, define $s_i=\inf \{t| Q(t)=2^iQ(t_0)\}$.  Clearly, $s_0=t_0$.
  Inequality (\ref{eqn:b24_10}) can be written as
  \begin{align*}
   \frac{|s_1|}{|s_0|} \leq \frac{1}{1+\frac{\epsilon_0}{\sqrt{2}A_0(C+\delta)}}.
  \end{align*}
  Generally, we have
  \begin{align}
    \frac{|s_{i+1}|}{|s_i|} \leq \frac{1}{1+\frac{\epsilon_0}{\sqrt{2}A_0(C+\delta)}}, \label{eqn:b22_4}
  \end{align}
  which yields
  \begin{align*}
    \frac{|s_i|}{|t_0|} \leq \left( \frac{1}{1+\frac{\epsilon_0}{\sqrt{2}A_0(C+\delta)}} \right)^i.
  \end{align*}
  Therefore, we have
  \begin{align}
    \lim_{i \to \infty} Q(s_i)|s_i|^{\lambda}
    &\leq \lim_{i \to \infty}  Q(t_0)|t_0|^{\lambda} \cdot \left( \frac{2}{(1+\frac{\epsilon_0}{\sqrt{2}A_0(C+\delta)})^{\lambda}} \right)^i.
  \label{eqn:b23_6}
  \end{align}
  In view of inequality (\ref{eqn:b24_8}), we have
  \begin{align*}
    \frac{2}{(1+\frac{\epsilon_0}{\sqrt{2}A_0(C+\delta)})^{\lambda}}<1.
  \end{align*}
  This yields that $\displaystyle \lim_{i \to \infty}Q(s_i)|s_i|^{\lambda}=0$.
  Since for every $t \in [s_i, s_{i+1}]$, one has
  \begin{align*}
    Q(t)|t|^{\lambda} \leq Q(s_{i+1})|s_i|^{\lambda} \leq 2 Q(s_i)|s_i|^{\lambda} \to 0, \quad \textrm{as} \; i \to \infty.
  \end{align*}
  It follows that
  \begin{align*}
    \limsup_{t \to 0} Q(t)|t|^{\lambda}=0, \quad
    \Leftrightarrow \;  Q(t)=o(|t|^{-\lambda}).
  \end{align*}

\end{proof}

\begin{corollary}
 Suppose $\left\{ (X, g(t)), -1 \leq t <0 \right\} \in \mathcal{L}(m, \kappa, [-1, 0))$,  $t=0$ is the singular time.
 Then
  \begin{align}
    \limsup_{t \to 0} \sqrt{O(t)Q(t)} |t| \geq \frac{\epsilon_0}{\sqrt{2}A_0},
  \label{eqn:b23_8}
  \end{align}
 where $\epsilon_0=\epsilon_0(m, \kappa)$ is the constant in Theorem~\ref{thm:b23_1},
 $A_0$ is the constant in Corollary~\ref{cly:c3_1}.
  \label{cly:b22_1}
\end{corollary}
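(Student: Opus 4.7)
The plan is to mimic the argument of Corollary~\ref{cly:a6_2} almost verbatim, with Corollary~\ref{cly:b24_2} replacing Corollary~\ref{cly:b24_1}. The crux is to combine the upper decay rate for $Q(t)$ supplied by Corollary~\ref{cly:b24_2} with the universal lower rate $Q(t)|t| \geq \tfrac18$ coming from inequality~(\ref{eqnin:rmgap}), and then read off the necessary lower bound on $C$.

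First I would set $C := \limsup_{t \to 0} \sqrt{O(t)Q(t)}\,|t|$; if $C=\infty$ there is nothing to prove, so assume $C<\infty$. Then Corollary~\ref{cly:b24_2} gives
\begin{align*}
Q(t) = o(|t|^{-\lambda}) \quad \text{for every } \lambda > L, \qquad L := \frac{1}{\log_2\!\left(1+\frac{\epsilon_0}{\sqrt{2}\,A_0 C}\right)}.
\end{align*}
On the other hand, the classical blowup rate $Q(t)|t| \geq \tfrac18$ (cf.~(\ref{eqnin:rmgap})) implies
\begin{align*}
Q(t)|t|^{\lambda} \geq \tfrac18\, |t|^{\lambda-1}.
\end{align*}

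Next I would combine these. For any $\lambda>L$, Corollary~\ref{cly:b24_2} forces $Q(t)|t|^{\lambda} \to 0$ as $t\to 0$; but if $\lambda<1$, the right-hand side $\tfrac18 |t|^{\lambda-1}$ tends to $+\infty$, a contradiction. Hence every $\lambda>L$ must satisfy $\lambda\geq 1$, so $L\geq 1$.

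Finally, unwinding $L\geq 1$ gives $\log_2\!\bigl(1+\tfrac{\epsilon_0}{\sqrt{2}A_0 C}\bigr)\leq 1$, i.e. $\tfrac{\epsilon_0}{\sqrt{2}A_0 C}\leq 1$, which is precisely $C\geq \tfrac{\epsilon_0}{\sqrt{2}A_0}$, as asserted in~(\ref{eqn:b23_8}). There is no real obstacle here beyond careful bookkeeping: the hard analytic work (the two-sided control of $|t_i|/|t_{i+1}|$) is absorbed entirely in Corollary~\ref{cly:b24_2}, and the present corollary is just the sharp logical reading of that decay estimate against the universal lower bound on $Q(t)|t|$.
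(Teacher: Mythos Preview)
Your proof is correct and essentially identical to the paper's: both set $C=\limsup_{t\to 0}\sqrt{O(t)Q(t)}\,|t|$, invoke Corollary~\ref{cly:b24_2} to get $Q(t)=o(|t|^{-\lambda})$ for all $\lambda>L$, confront this with the pointwise bound $Q(t)|t|\geq \tfrac18$, and deduce $L\geq 1$, hence $C\geq \epsilon_0/(\sqrt{2}A_0)$. The only cosmetic point is that the paper cites the pointwise inequality $Q(t)\geq \tfrac{1}{8|t|}$ directly (its (\ref{eqn:b22_6})) rather than the limit form (\ref{eqnin:rmgap}), but this is the same standard maximum-principle fact.
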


\begin{proof}
  Suppose $\displaystyle \limsup_{t \to 0} \sqrt{O(t)Q(t)} |t|=C$.
  By Corollary~\ref{cly:b24_2}, we have
  \begin{align}
    \lim_{t \to 0} Q(t)|t|^{\lambda}=0,
    \label{eqn:b22_5}
  \end{align}
  whenever $\lambda >\frac{1}{\log_2 (1+\frac{\epsilon_0}{\sqrt{2}A_0C})}$.

  On the other hand, since $t=0$ is a singular time, we have
  \begin{align}
    Q(t) \geq \frac{1}{8|t|}.
    \label{eqn:b22_6}
  \end{align}
  Therefore $\displaystyle \lim_{t \to 0} |t|^{\lambda-1}=0$ whenever $\lambda >\frac{1}{\log_2 (1+\frac{\epsilon_0}{\sqrt{2}A_0C})}$.
  It forces that
  \begin{align*}
    \frac{1}{\log_2 (1+\frac{\epsilon_0}{\sqrt{2}A_0C})} \geq 1, \quad \Rightarrow
    \frac{\epsilon_0}{\sqrt{2}A_0C} \leq 1, \quad \Rightarrow \quad
    C \geq \frac{\epsilon_0}{\sqrt{2}A_0}.
  \end{align*}
\end{proof}

\section{Applications of the curvature estimates}

\begin{proof}[Proof of Theorem~\ref{thmin:et1}]
   According to Perelman's no-local-collapsing theorem (c.f.~\cite{Pe1},~\cite{KL}),
   we obtain that the flow $\left\{ (X, g(t)), 0 \leq t<T \right\}$
  is $\kappa$-non-collapsed for some constant $\kappa$ depending only on $g(0)$ and $T$.
  Therefore, $\left\{ (X, g(t)), 0 \leq t<T \right\} \in \mathcal{L}(m, \kappa, [0, T))$.
  Corollary~\ref{cly:a6_2} applies and the Theorem is proved by letting
  $\eta_1= \frac{\epsilon_0}{\log 2}$.
\end{proof}

\begin{proof}[Proof of Theorem~\ref{thmin:et2}]
  It follows from Perelman's no-local-collapsing theorem and Corollary~\ref{cly:b24_1}.
\end{proof}

\begin{proof}[Proof of Theorem~\ref{thmin:et3}]
  It follows from Perelman's no-local-collapsing theorem and Corollary~\ref{cly:b22_1}.
\end{proof}

\begin{proof}[Proof of Theorem~\ref{thmin:et4}]
  It follows from Perelman's no-local-collapsing theorem and Corollary~\ref{cly:b24_2}.
\end{proof}

\begin{proof}[Proof of Theorem~\ref{thmin:ssolitongap}]
  It follows from Corollary~\ref{cly:a6_2}, Corollary~\ref{cly:b22_1}, and the fact that
  every shrinking soliton can be expanded to an ancient solution.
\end{proof}

\begin{corollary}
   Suppose $\left\{ (X, g(t)), 0 \leq t<T \right\}$ is a Ricci flow solution on a closed manifold, $t=T$ is a singular time.
 Then
 \begin{align}
   \int_0^T \sup_X |Ric|_{g(t)} dt =\infty.
   \label{eqn:et_ricint}
 \end{align}
  \label{cly:et_ricint}
\end{corollary}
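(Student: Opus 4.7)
The plan is to reduce this directly to Corollary~\ref{cly:a7_1}, which already asserts the integral divergence $\int_{-1}^0 P(t)\,dt = \infty$ for any flow in $\mathcal{L}(m,\kappa,[-1,0))$ whose terminal time is singular. Since the statement we want is essentially the same after a time rescaling, the only two things to verify are (i) that our flow lies in the moduli space $\mathcal{L}$ for some positive $\kappa$, and (ii) that a parabolic rescaling brings the interval $[0,T)$ to $[-1,0)$ while preserving both membership in $\mathcal{L}$ and the integral we care about.

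First I would invoke Perelman's no-local-collapsing theorem exactly as in the proofs of Theorems~\ref{thmin:et1}--\ref{thmin:et4} to conclude that the flow $\{(X,g(t)),\,0\le t<T\}$ is $\kappa$-non-collapsed for some $\kappa=\kappa(g(0),T)>0$, hence belongs to $\mathcal{L}(m,\kappa,[0,T))$. Because $X$ is closed and the flow is smooth on $[0,T)$, the curvature bound $Q_g(t)<\infty$ for each $t$ is automatic. Next I would introduce the parabolic rescaling $\tilde g(s)=T^{-1}g\bigl(T(s+1)\bigr)$ for $s\in[-1,0)$. This is a Ricci flow with singular time $s=0$, and $\kappa$-non-collapsing is invariant under such rescaling, so $\{(X,\tilde g(s)),\,-1\le s<0\}\in\mathcal{L}(m,\kappa,[-1,0))$.

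Applying Corollary~\ref{cly:a7_1} to $\tilde g$ yields
\[
\int_{-1}^{0}\sup_X|Ric|_{\tilde g(s)}\,ds=\infty.
\]
The Ricci tensor scales by $|Ric|_{\tilde g(s)}=T\,|Ric|_{g(T(s+1))}$, and the change of variables $t=T(s+1)$ gives $dt=T\,ds$ so that in fact
\[
\int_{-1}^{0}\sup_X|Ric|_{\tilde g(s)}\,ds=\int_{0}^{T}\sup_X|Ric|_{g(t)}\,dt,
\]
which delivers the desired divergence (\ref{eqn:et_ricint}).

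There is no real obstacle: the argument is bookkeeping around a time rescaling together with one invocation of Corollary~\ref{cly:a7_1}. I would note for clarity that the weaker conclusion $\limsup_{t\to T}(T-t)P(t)\ge \eta_1$ from Theorem~\ref{thmin:et1} is by itself \emph{not} enough to deduce the integral divergence, since a $\limsup$ lower bound allows $P$ to dip arbitrarily low between the extremal times; this is precisely why one needs the stronger Corollary~\ref{cly:a7_1}, whose proof goes through Theorem~\ref{thm:b23_1} and the doubling-time estimate of Lemma~\ref{lma:b23_1}.
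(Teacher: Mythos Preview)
Your proposal is correct and follows exactly the approach the paper takes: the paper's own proof is the one-line ``It is an application of Perelman's no-local-collapsing theorem and Corollary~\ref{cly:a7_1},'' and you have simply filled in the rescaling details accurately. Your closing remark about why Theorem~\ref{thmin:et1} alone would not suffice is a nice clarification but not required.
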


\begin{proof}
  It is an application of Perelman's no-local-collapsing theorem and Corollary~\ref{cly:a7_1}.
\end{proof}

Starting from either Theorem~\ref{thmin:et1} or Corollary~\ref{cly:et_ricint}, we obtain the famous extension theorem of N.Sesum.
\begin{corollary}[N.Sesum, c.f.~\cite{Se}]
   Suppose $\left\{ (X, g(t)), 0 \leq t<T \right\}$ is a Ricci flow solution on a closed manifold, $t=T$ is a singular time.
 Then
 \begin{align*}
   \sup_{X \times [0, T)} |Ric|=\infty.
 \end{align*}
  \label{cly:et_sesum}
\end{corollary}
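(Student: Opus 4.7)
The plan is to argue by contradiction using either Theorem~\ref{thmin:et1} or Corollary~\ref{cly:et_ricint}, both of which are already available. Suppose, toward a contradiction, that $\sup_{X \times [0,T)} |Ric| = M < \infty$. Then for every $t \in [0,T)$ we have $\sup_X |Ric|_{g(t)} \le M$.

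Starting from Theorem~\ref{thmin:et1}: multiplying the uniform bound by $|T-t|$ gives
\[
   |T-t|\,\sup_X |Ric|_{g(t)} \;\le\; M\,|T-t| \;\longrightarrow\; 0 \quad \text{as } t \to T,
\]
so $\displaystyle \limsup_{t\to T} |T-t|\,\sup_X |Ric|_{g(t)} = 0$. This directly contradicts the lower bound $\eta_1 > 0$ guaranteed by Theorem~\ref{thmin:et1}, since the flow extends (via Perelman's no-local-collapsing) to a class $\mathcal{L}(m,\kappa,[0,T))$ to which Theorem~\ref{thmin:et1} applies.

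Alternatively, starting from Corollary~\ref{cly:et_ricint}: the same uniform bound gives
\[
   \int_0^T \sup_X |Ric|_{g(t)} \, dt \;\le\; MT \;<\;\infty,
\]
contradicting the divergence asserted in \eqref{eqn:et_ricint}. Either route closes the argument; since both prior results are proved in the paper, no new technical step is required, and there is no genuine obstacle—the corollary is a clean consequence. I would present the first route, as it yields Sesum's theorem in a single line from the quantitative gap~\eqref{eqnin:et1}.
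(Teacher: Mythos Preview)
Your proposal is correct and matches the paper's approach exactly: the paper simply states that the corollary follows ``starting from either Theorem~\ref{thmin:et1} or Corollary~\ref{cly:et_ricint}'', and you have written out both of those one-line deductions. The parenthetical about Perelman's no-local-collapsing is unnecessary, since Theorem~\ref{thmin:et1} is already stated for closed manifolds and absorbs that step in its own proof.
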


\begin{corollary}
  Suppose $\left\{ (X, g(t)), 0 \leq t<T \right\}$ is a Ricci flow solution, $X$ is a closed manifold,
  $t=T$ is a singular time of type-I.  Then
  \begin{align}
    \limsup_{t \to T} |T-t| \sup_X |R|_{g(t)} >0.
    \label{eqn:et_typeI}
  \end{align}
  In other words, the norm of scalar curvature blows up at least at the rate of type-I.
  \label{cly:et_typeI}
\end{corollary}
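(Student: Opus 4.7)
The plan is to deduce this directly from Theorem~\ref{thmin:et3} by combining the type-I hypothesis on $|Rm|$ with the gap inequality for $\sqrt{|Rm|}\cdot\sqrt{|R|}$. No new curvature estimates are needed; the proof is essentially a one-line algebraic manipulation once the quantities have been set up correctly.

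First I would unpack the definitions. The type-I assumption on the singularity says precisely that
\[
 C_1 \;\triangleq\; \limsup_{t\to T}\, |T-t|\, Q(t) \;<\;\infty,
\]
where $Q(t)=\sup_X |Rm|_{g(t)}$. By Perelman's no-local-collapsing theorem the flow lies in $\mathcal{L}(m,\kappa,[0,T))$ for some $\kappa>0$, so Theorem~\ref{thmin:et3} applies and yields
\[
 \limsup_{t\to T}\, |T-t|\, \sqrt{Q(t)\,O(t)} \;\geq\; \eta_2 \;>\; 0,
\]
with $O(t)=\sup_X|R|_{g(t)}$.

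Next I would write the key factorization
\[
 |T-t|\,\sqrt{Q(t)\,O(t)} \;=\; \sqrt{|T-t|\,Q(t)}\;\cdot\;\sqrt{|T-t|\,O(t)}.
\]
Fix any $\epsilon>0$. By the definition of $C_1$, for $t$ sufficiently close to $T$ we have $|T-t|\,Q(t)\leq C_1+\epsilon$, so
\[
 |T-t|\,\sqrt{Q(t)\,O(t)} \;\leq\; \sqrt{C_1+\epsilon}\;\cdot\;\sqrt{|T-t|\,O(t)}.
\]
Taking $\limsup$ on both sides and using Theorem~\ref{thmin:et3},
\[
 \eta_2 \;\leq\; \sqrt{C_1+\epsilon}\;\cdot\;\limsup_{t\to T}\sqrt{|T-t|\,O(t)}.
\]
Since $\epsilon>0$ was arbitrary, squaring gives
\[
 \limsup_{t\to T}\, |T-t|\,O(t) \;\geq\; \frac{\eta_2^{\,2}}{C_1} \;>\; 0,
\]
which is exactly \eqref{eqn:et_typeI}.

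There is essentially no obstacle here; the only small care needed is the handling of the $\limsup$ versus the pointwise inequality $|T-t|Q(t)\leq C_1+\epsilon$, which is why I pass to an arbitrary $\epsilon>0$ rather than using $C_1$ directly. The result is a clean quantitative consequence: under the type-I hypothesis one in fact obtains the explicit lower bound $\limsup_{t\to T}|T-t|\,\sup_X|R|_{g(t)}\geq \eta_2^2/C_1$.
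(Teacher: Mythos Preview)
Your proof is correct and follows essentially the same approach as the paper: both invoke Theorem~\ref{thmin:et3} and then use the type-I bound on $|T-t|\,Q(t)$ together with the factorization $|T-t|\sqrt{Q(t)O(t)}=\sqrt{|T-t|Q(t)}\cdot\sqrt{|T-t|O(t)}$ to extract the lower bound on $\limsup |T-t|\,O(t)$. The paper carries this out by passing to a sequence $t_i\to T$ realizing the $\limsup$, whereas you handle the $\limsup$ directly via an $\epsilon$-argument; the content is identical.
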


\begin{proof}
  By Theorem~\ref{thmin:et3}, there exists a sequence of times $t_i \to T$ such that
  \begin{align*}
    \lim_{i \to \infty} |T-t_i| \sqrt{\sup_X |Rm|_{g(t_i)}} \cdot \sqrt{\sup_X |R|_{g(t_i)}} \geq \eta_3.
  \end{align*}
  Since the singularity is of type-I, we have
  \begin{align*}
    |T-t_i|\sup_X |Rm|_{g(t_i)}<C<\infty
  \end{align*}
  for every $i$. It follows that
  \begin{align*}
   &\quad \lim_{i \to \infty} \sqrt{T-t_i} \cdot \sqrt{\sup_X |R|_{g(t_i)}} \geq \frac{\eta_3}{\sqrt{C}}>0, \\
   &\Rightarrow \lim_{i \to \infty} |T-t_i| \sup_X |R|_{g(t_i)} \geq \frac{\eta_3^2}{C}>0, \\
   &\Rightarrow \limsup_{t \to T} |T-t|\sup_X |R|_{g(t)}>0.
  \end{align*}
\end{proof}

Since the type-I condition is strong, Corollary~\ref{cly:et_typeI} can be improved.
\begin{proposition}
   Suppose $\left\{ (X, g(t)), 0 \leq t<T \right\}$ is a Ricci flow solution, $X$ is a closed manifold,
  $t=T$ is a singular time of type-I.  Then
  \begin{align}
    \liminf_{t \to T} |T-t| \sup_X |R|_{g(t)} >0.
    \label{eqn:et_typeI_2}
  \end{align}
  \label{prn:et_typeI_2}
\end{proposition}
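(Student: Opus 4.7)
The strategy is to argue by contradiction and exploit the fact that a type-I singularity is modeled by a gradient shrinking Ricci soliton; combined with the soliton gap theorem (Theorem~\ref{thmin:ssolitongap}), this will force $\sup_X |R|_{g(t)}$ to stay of order $(T-t)^{-1}$ along \emph{every} sequence approaching $T$, not just a subsequence as in Corollary~\ref{cly:et_typeI}.

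Suppose for contradiction that there is a sequence $t_i \nearrow T$ along which $(T-t_i)\sup_X |R|_{g(t_i)} \to 0$. By the type-I hypothesis there is a constant $C_0$ with $\sup_X |Rm|_{g(t)} \leq C_0/(T-t)$. Consider the parabolic rescalings
\[
  \tilde{g}_i(s) = (T-t_i)^{-1}\, g\bigl(t_i + (T-t_i)s\bigr), \qquad s \in \bigl[-\tfrac{t_i}{T-t_i},\; 1\bigr),
\]
which satisfy $\sup_X |Rm|_{\tilde g_i(s)} \leq C_0/(1-s)$ uniformly in $i$ and, by (\ref{eqnin:rmgap}), $\sup_X |Rm|_{\tilde g_i(0)} \geq 1/8$. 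Pick basepoints $x_i \in X$ with $|Rm|_{\tilde g_i(0)}(x_i) \geq 1/16$. Perelman's no-local-collapsing gives a uniform $\kappa$ so that $\tilde g_i \in \mathcal{L}(m,\kappa,\cdot)$ on any fixed subinterval of $(-\infty,1)$.

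By Hamilton's compactness theorem, after passing to a subsequence, $(X,x_i,\tilde g_i(s))$ converges smoothly in the pointed Cheeger-Gromov sense to a complete, non-flat, $\kappa$-non-collapsed Ricci flow $(X_\infty, x_\infty, \tilde g_\infty(s))$ on $(-\infty,1)$, with $|Rm|_{\tilde g_\infty(0)}(x_\infty) \geq 1/16$ and singular time $s=1$. Since the original singularity is of type-I and the basepoints $x_i$ are chosen at the type-I scale, the Enders-M\"uller-Topping theorem identifies $(X_\infty, \tilde g_\infty)$ with a non-trivial gradient shrinking Ricci soliton, and the uniform bound $\sup|Rm|_{\tilde g_\infty(0)} \leq C_0$ holds by type-I. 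Now the assumption on the sequence $t_i$ forces
\[
  \sup_{X_\infty}|R|_{\tilde g_\infty(0)} = \lim_{i\to\infty} (T-t_i)\sup_X |R|_{g(t_i)} = 0,
\]
and self-similarity of the soliton propagates $R\equiv 0$ to every time slice. Thus $\sqrt{\sup|Rm|\cdot \sup|R|} = 0$ on the non-flat soliton $(X_\infty, \tilde g_\infty(0))$, directly contradicting Theorem~\ref{thmin:ssolitongap}.

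\paragraph{Main obstacle.} The delicate step is the passage from a Cheeger-Gromov Ricci-flow limit to a genuine \emph{soliton}, which is what allows Theorem~\ref{thmin:ssolitongap} to apply. This is where the type-I hypothesis is used essentially (beyond merely bounding $|Rm|$ in the rescaling), via the Enders-M\"uller-Topping structure result. An attempt to avoid this by working purely with Corollary~\ref{cly:c3_1} on the rescaled flows runs into the difficulty that the estimate $P\leq A_0\sqrt{\sup O}$ requires $\sup|R|$ to be small on a backward time interval, whereas the hypothesis only delivers smallness on the single slice $s=0$; the forward/backward evolution of $\sup|R|$ is not monotone (only $R_{\min}$ is), so without the soliton's rigid structure the oscillation of $\sup_X|R|_{g(t)}$ between type-I size and size $o((T-t)^{-1})$ cannot be excluded from the flow equations alone.
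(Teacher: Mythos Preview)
Your argument is correct but takes a genuinely different route from the paper's. You invoke the Enders--M\"uller--Topping structure theorem to identify the blowup limit as a non-flat shrinking soliton with $R\equiv 0$, and then derive a contradiction from Theorem~\ref{thmin:ssolitongap}. The paper instead avoids both of these ingredients: after taking the same parabolic rescalings and passing to a smooth limit, it only uses that the limit is an \emph{ancient} solution with bounded curvature, hence $R\geq 0$ everywhere by the maximum principle. Since $R$ vanishes at the basepoint at $s=0$, the strong maximum principle forces $R\equiv 0$ on all of space-time, and the evolution equation $\partial_t R=\Delta R+2|Ric|^2$ then gives $Ric\equiv 0$, so the limit flow is static. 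But a static flow with $|Rm|_{g_\infty(0)}(x_\infty)\geq 1/8$ has the same curvature at every negative time $-A$, which, pulled back to the original flow, violates the type-I bound once $A$ is large. The paper's argument is thus entirely self-contained and elementary, requiring no soliton classification or gap theorem; your approach, while heavier, is more structural and makes transparent the link between Proposition~\ref{prn:et_typeI_2} and the soliton gap of Theorem~\ref{thmin:ssolitongap}. One minor point: the equality $\sup_{X_\infty}|R|=\lim_i(T-t_i)\sup_X|R|_{g(t_i)}$ that you write is really an inequality ($\leq$), which is all you need; and the EMT theorem as usually stated fixes the basepoint, so with your varying $x_i$ you should either cite the version allowing moving basepoints or note that the monotonicity argument underlying EMT goes through unchanged.
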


\begin{proof}
 Otherwise, we have a sequence of times $t_i \to T$ such that
 \begin{align*}
   \lim_{i \to \infty} |T-t_i|\sup_X |R|_{g(t_i)}=0.
 \end{align*}
 Let $g_i=|T-t_i|^{-1}g(|T-t_i|t+t_i)$, $\displaystyle C=\sup_{0 \leq t< T} |T-t|\sup_X|Rm|_{g(t)}<\infty$.
 Clearly, the flow $\left\{ (X, g_i(t)), \frac{-t_i}{T-t_i} \leq t \leq 0 \right\}$ satisfies
 \begin{align*}
   \sup_X |Rm|_{g_i(t)} \leq \frac{C}{1-t} \leq C, \quad \forall \; t \in [-\frac{t_i}{T-t_i}, 0].
 \end{align*}
 Moreover, each $g_i$ is $\kappa$-non-collapsed.  Let $x_i$ be the point where $\displaystyle \sup_X |Rm|_{g(t_i)}$
 is achieved.  Then we have smooth convergence:
 \begin{align*}
   \left\{ (X, x_i, g_i(t)), \frac{-t_i}{T-t_i} \leq t \leq 0 \right\} \stackrel{C^{\infty}}{\longrightarrow}
   \left\{ (X_{\infty}, x_{\infty}, g_{\infty}(t)), -\infty < t \leq 0 \right\}.
 \end{align*}
 The limit solution $g_{\infty}$ is a complete ancient Ricci flow solution with bounded curvature. It follows from maximum principle that $R \geq 0$ (c.f.Lemma 2.18 of~\cite{CLN}).  Moreover, we have
 \begin{align*}
   |R|_{g_{\infty}(0)}(x_{\infty})=\lim_{i \to \infty} |R|_{g_i(0)}(x_i) \leq \lim_{i \to \infty} \sup_X |R|_{g_i(0)}
    =\lim_{i \to \infty} |T-t_i|\sup_X |R|_{g(t_i)}=0.
 \end{align*}
 Therefore the strong maximum principle applies and we obtain $R \equiv 0$ on the whole
 space-time. By the evolution equation of the scalar curvature, we see that $Ric \equiv 0$.
 So the flow $g_{\infty}$ is ``static". For every large number $A>0$, we have $|Rm|_{g_{\infty}(-A)}(x_{\infty})=|Rm|_{g_{\infty}(0)}(x_{\infty}) \geq \frac{1}{8}>\frac{1}{9}$.
 It follows that
 \begin{align*}
   |Rm|_{g(t_i-A|T-t_i|)}(x_i) > \frac{1}{9|T-t_i|}, \Rightarrow
   (A+1)|T-t_i| \cdot |Rm|_{g(T-(A+1)|T-t_i|)}(x_i)>\frac{A+1}{9}.
 \end{align*}
 This forces that
 \begin{align*}
   C=\sup_{t \in [0, T)} |T-t| \sup_X |Rm|_{g(t)}>\frac{A+1}{9},
 \end{align*}
 for every $A>0$.  In other words, $C=\infty$, which contradicts to the assumption that the singularity is type-I.
\end{proof}

 Proposition~\ref{prn:et_typeI_2} has strong relationship to the results in~\cite{EMT}. \\

 Theorem~\ref{thmin:et3} can be used to give an alternative proof of the extension theorems in~\cite{LS3}.

\begin{corollary}[N.Le, N.Sesum, c.f.~\cite{LS3}]
  Suppose $\left\{ (X, g(t)), 0 \leq t<T \right\}$ is a Ricci flow solution, $X$ is a closed manifold of dimension $m$,
  $t=T$ is a singular time of type-I.  Then
  \begin{align}
    \int_0^T \int_X |R|^{\alpha} dvdt=\infty,  \quad \forall \; \alpha \geq \frac{m+2}{2}.
    \label{eqn:et_typeI_int}
  \end{align}
  \label{cly:et_typeI_int}
\end{corollary}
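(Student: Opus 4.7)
The plan is to argue by contradiction, exploiting the observation that the integrand $|R|^{\alpha}\,dv\,dt$ has non-positive parabolic scaling dimension precisely when $\alpha \geq (m+2)/2$: under the rescaling $g \mapsto \lambda g$, $t \mapsto \lambda t$, this density picks up a factor of $\lambda^{(m+2)/2-\alpha}\leq 1$. So suppose for contradiction that $\int_0^T\!\int_X |R|^\alpha\,dv\,dt<\infty$ for some $\alpha\geq (m+2)/2$.

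First I would set up the standard type-I blowup. Using Perelman's no-local-collapsing together with the curvature gap (\ref{eqnin:rmgap}), I pick times $t_i\to T$ and points $x_i\in X$ with $(T-t_i)|Rm|_{g(t_i)}(x_i)\to c\geq \tfrac18$, and define the rescaled flows
\[
    g_i(s) \;=\; (T-t_i)^{-1}\,g\!\left(t_i+(T-t_i)s\right).
\]
The type-I assumption gives a uniform bound $|Rm|_{g_i(s)}\leq C/(1-s)$ on every fixed interval $[-A,0]$ (for $i$ large), and each $g_i$ remains $\kappa$-non-collapsed. Hamilton's compactness then extracts a smooth complete ancient limit $\{(X_\infty,x_\infty,g_\infty(s)) : -\infty<s\leq 0\}$ with $|Rm|_{g_\infty(0)}(x_\infty)=c>0$.

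Next I would perform the scaling computation
\[
    \int_{-A}^{0}\!\!\int_X |R|^\alpha_{g_i}\,dv_{g_i}\,ds
    \;=\;(T-t_i)^{\alpha-(m+2)/2}\,\int_{t_i-A(T-t_i)}^{t_i}\!\!\int_X |R|^\alpha_g\,dv_g\,dt.
\]
When $\alpha=(m+2)/2$ the right-hand side is a tail of a convergent spacetime integral and so tends to $0$; when $\alpha>(m+2)/2$ the prefactor $(T-t_i)^{\alpha-(m+2)/2}$ drives the product to $0$ even after bounding the tail by the whole (finite) integral. Applying Fatou's lemma to the smooth convergence on compact subsets of $X_\infty$ yields $\int_{-A}^{0}\!\int_{X_\infty}|R|^\alpha_{g_\infty}\,dv_{g_\infty}\,ds=0$ for every $A$, so $R\equiv 0$ on $X_\infty\times(-\infty,0]$.

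Finally I would close the argument by the rigidity mechanism already used in Proposition~\ref{prn:et_typeI_2}. From $\partial_tR=\Delta R+2|Ric|^2$ and $R\equiv 0$ I obtain $Ric\equiv 0$, so $g_\infty$ is static and Ricci flat; hence $|Rm|_{g_\infty(s)}(x_\infty)\equiv c>0$ for all $s\leq 0$. Unwinding the rescaling, for each fixed $s<0$ and $\tau_i=t_i+(T-t_i)s$ I get
\[
    (T-\tau_i)\,|Rm|_{g(\tau_i)}(x_i) \;=\; (1-s)\,|Rm|_{g_i(s)}(x_i) \;\longrightarrow\; (1-s)c,
\]
so $\sup_{t\in[0,T)}(T-t)\sup_X|Rm|_{g(t)}\geq (1-s)c$ for every $s<0$, which is infinite in the limit $s\to-\infty$ and contradicts type-I. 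The main obstacle is identifying the critical exponent $(m+2)/2$ through parabolic scaling and justifying that the smallness of the spacetime integral on the rescaled flow passes to the limit; once $R\equiv 0$ on the blowup is secured, the rest is a direct reprise of Proposition~\ref{prn:et_typeI_2}.
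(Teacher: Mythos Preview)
Your argument is correct, but it takes a genuinely different route from the paper's proof. The paper first invokes Corollary~\ref{cly:et_typeI} (which in turn rests on Theorem~\ref{thmin:et3} and hence on the estimate of Corollary~\ref{cly:c3_1}) to produce a sequence $(x_i,t_i)$ along which the \emph{scalar} curvature satisfies $|T-t_i|\,|R|_{g(t_i)}(x_i)\to c_0>0$. After the same parabolic rescaling and compactness, the contradiction is then immediate: the scaling identity forces $\int_{-1/2}^{0}\!\int_X |R|_{g_i}^{\alpha}\,dv\,dt\to 0$, hence $|R|_{g_\infty(0)}(x_\infty)=0$, while the choice of base points gives $|R|_{g_\infty(0)}(x_\infty)\geq c_0>0$.

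You instead base the blowup on the elementary gap $\limsup_{t\to T}(T-t)\sup_X|Rm|\geq\tfrac18$, which requires none of the paper's new estimates. The price is that the vanishing of the spacetime integral on the limit only gives $R\equiv 0$, not a direct contradiction; you then feed this into the rigidity mechanism of Proposition~\ref{prn:et_typeI_2} ($R\equiv 0\Rightarrow Ric\equiv 0\Rightarrow$ static $\Rightarrow$ type-I bound blows up). Your route is thus more self-contained---it bypasses Theorem~\ref{thmin:et3} entirely---while the paper's proof is shorter because Corollary~\ref{cly:et_typeI} has already isolated a scalar-curvature concentration point, making the contradiction one line.
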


\begin{proof}
  If this statement was wrong, then we have
  \begin{align}
      \int_0^T \int_X |R|^{\alpha} dvdt<\infty,  \quad \forall \; \alpha \geq \frac{m+2}{2}.
    \label{eqn:et_typeI_int_1}
  \end{align}
  By Corollary~\ref{cly:et_typeI}, there exist a sequence of times $t_i \to T$
  and a sequence of points $x_i \in X$ such that
  \begin{align*}
    \lim_{i \to \infty} |T-t_i| |R|_{g(t_i)}(x_i)=c_0>0.
  \end{align*}
  Let $g_i(t)=|T-t_i|^{-1}g(|T-t_i|t+t_i)$, $\displaystyle C=\sup_{0 \leq t< T} |T-t|\sup_X|Rm|_{g(t)}<\infty$.
  The flow $\left\{ (X, g_i(t)), -\frac{t_i}{T-t_i} \leq t \leq 0 \right\}$ satisfies the properties.
  \begin{align}
    & \sup_X |Rm|_{g_i(t)} \leq \frac{C}{1-t}.  \label{eqn:rmbd}\\
    & |R|_{g_i(0)}(x_i) \geq c_0. \label{eqn:scalargap}\\
    & \int_{-\frac{1}{2}}^0 \int_X |R|_{g_i(t)}^{\alpha} dvdt \to 0.  \label{eqn:scalarintsmall}
  \end{align}
  Therefore, we have
  \begin{align*}
    \left\{ (X, x_i, g_i(t)), -1<t \leq 0 \right\} \stackrel{C^{\infty}}{\longrightarrow}
    \left\{ (X_{\infty}, x_{\infty}, g_{\infty}(t)), -1 <t\leq 0 \right\}.
  \end{align*}
  It follows from equation (\ref{eqn:scalargap}) that $|R|_{g_{\infty}(0)}(x_{\infty})\geq c_0>0$.  On the other hand,
  equation (\ref{eqn:scalarintsmall}) implies that $|R|_{g_{\infty}(0)}(x_{\infty})=0$. Contradiction!
\end{proof}

In light of Theorem~\ref{thmin:et1}, the main theorem in~\cite{BWa} can be improved.
\begin{theorem}
  Suppose $\left\{ (X, g(t)), 0 \leq t<T \right\}$ is a Ricci flow solution, $X$ is a closed manifold of dimension $m$,
  $t=T$ is a singular time.  Then either $\int_0^T \int_X |R|^{\frac{m+2}{2}} dv dt=\infty$, or
  \begin{align}
    \limsup_{t \to T} |T-t| \left(\sup_X |Ric_{-}|_{g(t)} \right) \geq \eta_4,
    \label{eqn:et4}
  \end{align}
  where $\eta_4=\eta_4(m, \kappa)<< \eta_1(m, \kappa)$, $\kappa$ is the non-collapsing constant of this flow,
  $Ric_{-}$ is the negative part of the Ricci tensor, $\eta_1$ is the constant in Theorem~\ref{thmin:et1}.

  In other words, if $\int_0^T \int_X |R|^{\frac{m+2}{2}}dvdt<\infty$ and $T$ is a singular time,
  then $|Ric_{-}|$ blows up at least at the rate of type-I.
  \label{thm:et_rint}
\end{theorem}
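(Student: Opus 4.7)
The plan is to argue by contradiction, combining Theorem~\ref{thmin:et1}, a pointwise linear-algebra bound, and the scale invariance of the $L^{(m+2)/2}$-integral of $R$. Suppose $\int_0^T\int_X |R|^{(m+2)/2}\,dv\,dt<\infty$ while $\limsup_{t\to T}(T-t)\sup_X|Ric_-|_{g(t)}<\eta_4$, where $\eta_4$ will be chosen small relative to $\eta_1$. Perelman's no-local-collapsing theorem places the flow in $\mathcal{L}(m,\kappa,[0,T))$, and Theorem~\ref{thmin:et1} yields a sequence $t_i\to T$ and points $x_i\in X$ attaining $\sup_X|Ric|_{g(t_i)}$ with $(T-t_i)|Ric|_{g(t_i)}(x_i)\geq \eta_1/2$.

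At each point, decompose $Ric=Ric_+-Ric_-$ orthogonally into positive and negative parts and estimate $|Ric_+|\leq \operatorname{tr}(Ric_+)=R+\operatorname{tr}(Ric_-)\leq |R|+\sqrt{m}\,|Ric_-|$. Together with $|Ric|^2=|Ric_+|^2+|Ric_-|^2$ this yields the pointwise inequality $|Ric|\leq \sqrt{2}\,|R|+\sqrt{2m+1}\,|Ric_-|$. Evaluating at $(x_i,t_i)$ and using the contradiction hypothesis on $|Ric_-|$ produces $|R|_{g(t_i)}(x_i)\geq c_0(m)/(T-t_i)$ provided $\eta_4<\eta_1/(4\sqrt{2m+1})$; so $|R|$ blows up at type-I rate along $(x_i,t_i)$.

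Now parabolically rescale $\tilde g_i(t)=(T-t_i)^{-1}g(t_i+(T-t_i)t)$. The integral $\iint |R|^{(m+2)/2}\,dv\,dt$ is invariant under this rescaling, so $\int_{-1/2}^0\int_X |R|_{\tilde g_i}^{(m+2)/2}\,dv\,dt$ equals the original integral on $[t_i-(T-t_i)/2,\,t_i]$ and therefore tends to $0$. Meanwhile at rescaled time $0$ we have $|R|_{\tilde g_i(0)}(x_i)\geq c_0$, $\sup_X|Ric|_{\tilde g_i(0)}\leq C$ (by the choice of $x_i$), and $\sup_X|Ric_-|_{\tilde g_i}$ stays uniformly small on a small backward interval. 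The evolution equation $(\partial_t-\Delta)R=2|Ric|^2\geq 0$ makes $R$ a supersolution of the heat equation with a uniformly bounded non-negative source, so a parabolic Harnack/mean-value argument should propagate the pointwise lower bound $|R|(x_i,0)\geq c_0$ to a parabolic neighborhood $B_{\tilde g_i(0)}(x_i,r)\times[-r^2,0]$ of uniform size $r=r(m,\kappa,\eta_1)>0$. Choosing the $t_i$ so that the corresponding intervals in the original flow are disjoint, the $L^{(m+2)/2}$-integrals over these neighborhoods contribute an infinite sum, contradicting the integrability assumption.

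The main obstacle is the propagation step: the rescaled flows do not carry a uniform $|Rm|$ bound, so Hamilton's compactness theorem is unavailable and one cannot extract a smooth Ricci flow limit. The lower bound for $R$ must instead be propagated intrinsically from the parabolic equation it satisfies. The pointwise inequality $|Ric|\leq \sqrt{2}\,|R|+\sqrt{2m+1}\,|Ric_-|$ plays a second role here: it controls the source term $2|Ric|^2$ by $R^2$ plus a small contribution from $|Ric_-|$, which should permit a Moser iteration on $R$ in the spirit of Theorem~\ref{thm:lcRm} to produce the desired localized lower bound.
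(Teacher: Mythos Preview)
Your overall architecture---contradiction, use Theorem~\ref{thmin:et1} to locate space-time points where $|Ric|$ (hence $|R|$, via almost-nonnegativity of Ricci) is large, then exploit the scale invariance of $\iint|R|^{(m+2)/2}$ to sum infinitely many disjoint contributions---matches the paper exactly, as does the pointwise step $R\gtrsim|Ric|$ at the selected points.

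The gap you flag in the propagation step is real, and your proposed bootstrap does not close it. The inequality $|Ric|\leq\sqrt{2}\,|R|+\sqrt{2m+1}\,|Ric_-|$ converts a bound on $R$ into a bound on $|Ric|$, but the Moser iteration you need for $R$ requires geometric control (a Sobolev constant, volume comparison, control of $\partial_t d\mu=-R\,d\mu$) \emph{before} any bound on $R$ is available; the argument is circular. Your claim that $\sup_X|Ric|_{\tilde g_i(0)}\leq C$ ``by the choice of $x_i$'' is also unjustified: choosing $x_i$ to realize the spatial supremum gives only the lower bound $(T-t_i)P(t_i)\geq\eta_1/2$, and since the $\limsup$ in Theorem~\ref{thmin:et1} may be $+\infty$, no upper bound follows.

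The paper resolves this with a point-picking device and a different rescaling. Take an increasing sequence $A_i\nearrow\limsup_{t\to T}(T-t)P(t)\ (\geq\eta_1)$ and let $t_i$ be the \emph{first} time at which $(T-t)P(t)=A_i$. Minimality forces $(T-t)P(t)\leq A_i$ for all $t\leq t_i$. Now rescale by $P_i=P(t_i)$ rather than by $(T-t_i)^{-1}$: the flow $g_i(t)=P_i\,g(P_i^{-1}t+t_i)$ satisfies $\sup_X|Ric|_{g_i(t)}\leq A_i/(A_i-t)\leq 1$ on the \emph{entire} backward interval $[-P_it_i,0]$. This uniform two-sided Ricci bound supplies all the geometry needed for the parabolic Moser iteration of \cite{BWa}, which then yields $\int_{-1}^{0}\int_X|R|_{g_i}^{(m+2)/2}\,dv\,dt\geq\delta$ directly from $R_{g_i(0)}(x_i)\geq 1/\sqrt{2}$. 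The first-time point-picking is the missing idea in your outline; once you insert it, the rest of your plan goes through verbatim.
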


\begin{proof}

  Suppose the flow does not satisfy the second condition, i.e.,
  \begin{align}
     \limsup_{t \to T} |T-t| \left(\sup_X |Ric_{-}|_{g(t)} \right) < \eta_4.
     \label{eqn:lesseta4}
  \end{align}

  Let $A_i$ be an increasing sequence such that
  \begin{align}
    \lim_{i \to \infty} A_i = \limsup_{t \to T} |T-t|\sup_X |Ric|_{g(t)} \geq \eta_1.
    \label{eqn:Aichoice}
  \end{align}
  For each $A_i$, let $t_i$ be the first time such that $\displaystyle |T-t_i|\sup_X |Ric|_{g(t_i)}=A_i$.  Suppose
  that $\displaystyle P_i=\sup_X |Ric|_{g(t_i)}$ is achieved at the point $x_i$.  Define
  $g_i(t)=P_ig(P_i^{-1}t+t_i)$. This flow satisfies
  \begin{align*}
    \sup_X |Ric|_{g_i(t)} \leq \frac{A_i}{A_i-t} \leq 1, \quad \; \forall \; t \in [-P_it_i, 0].
  \end{align*}
  By the fact $\eta_4 << \eta_1$ and equation (\ref{eqn:Aichoice}),
  we have $\eta_4 << A_i$.   It follows from inequality (\ref{eqn:lesseta4})
  that $Ric_{g_i(t)}$ is almost nonnegative when $t \leq 0$.  In particular, at point $(x_i, 0)$, we have
  \begin{align*}
    R^2 \geq \frac{1}{2} |Ric|^2 \geq \frac{1}{2}.
  \end{align*}
  Then apply parabolic Moser iteration to the evolution equation of scalar curvature (c.f.~\cite{BWa} for more details), we have
  \begin{align*}
    \int_{-1}^0 \int_X |R|^{\frac{m+2}{2}} dvdt \geq \delta
  \end{align*}
  for some fixed constant $\delta$.  In view of the scale invariance of the integration, by taking subsequence if necessary, we have
  \begin{align*}
    \int_0^T \int_X |R|^{\frac{m+2}{2}} dv dt
    \geq \sum_{k=1}^{\infty}   \int_{t_{i_k}}^{t_{i_{k+1}}} \int_X |R|^{\frac{m+2}{2}} dv dt \geq \sum_{k=1}^{\infty} \delta =\infty.
  \end{align*}

\end{proof}

\begin{theorem}
  There exists a positive constant $\eta_5=\eta_5(m,V)$ such that the following property holds.

  Suppose $(X^m, g)$ is a complete, non-flat, gradient shrinking soliton:
  \begin{align}
    R_{ij}+f_{ij}-\frac{g_{ij}}{2}=0.   \label{eqn:gssoliton}
  \end{align}
  If $(4\pi)^{-\frac{m}{2}} \int_X e^{-f}dv \geq V$, then
  \begin{align}
    \min \left\{\sqrt{\sup_X |Rm|} \cdot \sqrt{\sup_X |R|}, \quad \sup_X |Ric| \right\} \geq \eta_5.
    \label{eqn:gssolitongap}
  \end{align}
  \label{cly:gssolitongap}
\end{theorem}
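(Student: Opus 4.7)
The plan is to reduce the statement to Theorem~\ref{thmin:ssolitongap} by showing that the hypothesis $(4\pi)^{-\frac{m}{2}}\int_X e^{-f}dv \geq V$ forces $(X,g)$ to be $\kappa$-non-collapsed with $\kappa=\kappa(m,V)$. Once this is done, $\eta_5(m,V)$ can be taken to be $\eta_3(m,\kappa(m,V))$ from Theorem~\ref{thmin:ssolitongap}.

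First, as already used in the proof of Theorem~\ref{thmin:ssolitongap}, every gradient shrinking soliton satisfying (\ref{eqn:gssoliton}) can be expanded to a self-similar ancient Ricci flow $\{(X,g(t)),\,-\infty<t<1\}$ via $g(t)=(1-t)\phi_t^*g$, where $\phi_t$ is the one-parameter family of diffeomorphisms generated by $\nabla f/(1-t)$. The curvature bound $\sup_X|Rm|_g<\infty$ transfers to a curvature bound on each slice, so each time-slice lies in the appropriate moduli space $\mathcal{L}(m,\kappa,\cdot)$ once non-collapsing is established.

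The key step is to establish $\kappa$-non-collapsing with $\kappa=\kappa(m,V)$. Normalizing the additive constant in the shrinker potential by the standard condition $R+|\nabla f|^2-f=\mathrm{const}$, the quantity $(4\pi)^{-\frac{m}{2}}\int_X e^{-f}dv$ coincides with Perelman's Gaussian density at $\tau=1$ for the self-similar ancient solution, and it is invariant under the diffeomorphisms $\phi_t$. A lower bound $V$ on this Gaussian density translates, through Perelman's $\mathcal{W}$-functional and the associated logarithmic Sobolev inequality, into a lower bound on $\mu(g(t),\tau)$ uniform in $(t,\tau)$ on a fixed interval. By the standard Perelman argument, this yields the $\kappa$-non-collapsing estimate $\Vol(B(x,r))\geq\kappa r^m$ whenever $\sup_{B(x,r)}|Rm|\leq r^{-2}$, with $\kappa$ depending only on $m$ and $V$.

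With $\kappa=\kappa(m,V)$ in hand, the extended flow $(X,g(t))$ is a complete, non-flat, bounded-curvature, $\kappa$-non-collapsed ancient solution, so Theorem~\ref{thmin:ssolitongap} applies at $t=0$ and yields
\[
\min\left\{\sqrt{\sup_X|Rm|}\cdot\sqrt{\sup_X|R|},\;\sup_X|Ric|\right\}\geq\eta_3(m,\kappa(m,V))=:\eta_5(m,V).
\]
The main obstacle will be the normalization of the additive constant in $f$ and the careful translation of the Gaussian density bound into a $\kappa$-non-collapsing constant with clean $(m,V)$-dependence; in particular one must check that the minimizer interpretation of $\mu$ forces the density lower bound to persist along the self-similar flow. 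Once this is settled, the gap theorem is a direct consequence of the already-proven Theorem~\ref{thmin:ssolitongap}.
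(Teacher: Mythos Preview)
Your overall strategy---derive $\kappa$-non-collapsing with $\kappa=\kappa(m,V)$ from the weighted-volume lower bound and then invoke Theorem~\ref{thmin:ssolitongap}---is exactly the route the paper takes, and the paper simply cites Naber's Theorem~4.2 for the $\kappa=\kappa(m,V)$ step rather than rederiving it through Perelman's $\mathcal{W}$-functional as you propose.

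However, there is a genuine gap. Theorem~\ref{thmin:ssolitongap} carries the hypothesis $\sup_X|Rm|<\infty$, and this is \emph{not} assumed in the present statement. You invoke it without comment (``The curvature bound $\sup_X|Rm|_g<\infty$ transfers to a curvature bound on each slice''), but nothing in the hypotheses guarantees it. The paper deals with this by splitting the two quantities in the minimum. For $\sup_X|Ric|$ it appeals directly to the Munteanu--Wang gap theorem, which gives the universal lower bound $\frac{1}{100m^2}$ for any non-flat gradient shrinker, with no curvature bound and no dependence on $V$. For $\sqrt{\sup_X|Rm|}\cdot\sqrt{\sup_X|R|}$ it first uses Pigola--Rimoldi--Setti to get $\sup_X|R|>0$ from non-flatness; then if $\sup_X|Rm|=\infty$ the product is trivially infinite, while if $\sup_X|Rm|<\infty$ one is in the setting of Theorem~\ref{thmin:ssolitongap} and your argument (equivalently, the paper's) goes through. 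Without the Munteanu--Wang input (or some substitute), your reduction does not yield the $\sup_X|Ric|$ bound when $|Rm|$ is unbounded.
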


\begin{proof}
  In the gradient shrinking soliton case,
  the Gap Theorem of~\cite{MW} implies
  $\displaystyle \sup_X |Ric| \geq \frac{1}{100m^2}$. It suffices to show
   \begin{align}
    \sqrt{\sup_X |Rm|} \cdot \sqrt{\sup_X |R|} \geq \eta_5.
    \label{eqn:RmRgap}
   \end{align}
  Since $(X, g)$ is non-flat, we see that $\displaystyle \sup_X |R|>0$ by Theorem 3 of~\cite{PRS}.
  If $\displaystyle \sup_X |Rm|=\infty$, then inequality (\ref{eqn:RmRgap}) holds trivially.  So we
  assume $\displaystyle \sup_X |Rm|<\infty$.  However, according to Theorem 4.2 of~\cite{Nab}, there exists a $\kappa=\kappa(m, V)$ such that $(X, g)$ is $\kappa$-non-collapsed. Therefore, inequality (\ref{eqn:RmRgap}) follows from Theorem~\ref{thmin:ssolitongap}.
 \end{proof}

 Bing  Wang,   Department of Mathematics, Princeton University,
  Princeton, NJ 08544, USA; bingw@math.princeton.edu\\

\end{document}